%
%
\documentclass[a4paper,leqno,12pt]{amsart}

\usepackage{amsmath}
\usepackage{amsfonts}	
\usepackage{amssymb}  	
\usepackage{mathrsfs}
\usepackage{mathtools}	
\usepackage{savesym}
\usepackage{esvect}		
\usepackage{ifthen}		
\usepackage{color} 	 	
\usepackage{graphicx}  	
\usepackage{caption}  	
\usepackage{amscd} 	  	
\usepackage{float} 	  	
\usepackage[svgnames]{xcolor}

\usepackage[shortlabels,inline]{enumitem} 
\usepackage{tikz}    
\usepackage{tikz-cd} 

\footskip 5.ex
\parskip 0.9 ex

\usepackage[a4paper,%
			margin=1in,%
			top=1.2in,
			bottom=1.2in,
]{geometry}


\usepackage[numbers,sort]{natbib} 	

\usepackage{aliascnt} 
\usepackage[pdfusetitle,
		bookmarks=true,
		pagebackref=false,
		colorlinks,
		linkcolor=Navy,
		citecolor=Navy,
		urlcolor=black]{hyperref}

\usepackage{amsthm}

\theoremstyle{plain}
\newaliascnt{theorem}{dummy}
\newtheorem{theorem}[theorem]{Theorem}
\aliascntresetthe{theorem}

\newaliascnt{proposition}{dummy}
\newtheorem{proposition}[proposition]{Proposition}
\aliascntresetthe{proposition}

\newaliascnt{corollary}{dummy}
\newtheorem{corollary}[corollary]{Corollary}
\aliascntresetthe{corollary}

\newaliascnt{lemma}{dummy}
\newtheorem{lemma}[lemma]{Lemma}
\aliascntresetthe{lemma}

\newaliascnt{conjecture}{dummy}

\aliascntresetthe{conjecture}

\theoremstyle{definition}
\newaliascnt{definition}{dummy}
\newtheorem{definition}[definition]{Definition}
\aliascntresetthe{definition}

\newaliascnt{example}{dummy}
\newtheorem{example}[example]{Example}
\aliascntresetthe{example}

\theoremstyle{remark}
\newaliascnt{remark}{dummy}
\newtheorem{remark}[remark]{Remark}
\aliascntresetthe{remark}

\numberwithin{equation}{section} 

\pagestyle{plain}



\allowdisplaybreaks


\newcommand{\calB}{\mathcal{B}}

\newcommand{\calD}{\mathcal{D}}
\newcommand{\calQ}{\mathcal{Q}}

\newcommand{\calM}{\mathcal{M}}
\newcommand{\calL}{\mathcal{L}}

\newcommand{\calW}{\mathcal{W}}
\newcommand{\calA}{\mathcal{A}}
\newcommand{\calH}{\mathcal{H}}
\newcommand{\calJ}{\mathcal{J}}

\newcommand{\scrC}{\mathscr{C}}


\newcommand{\bbR}{\mathbb{R}}

\newcommand{\bbZ}{\mathbb{Z}}
\newcommand{\bbN}{\mathbb{N}}
\newcommand{\bbP}{\mathbb{P}}
\newcommand{\bbT}{\mathbb{T}}



\DeclareMathOperator*{\intxn}{\cap}
\DeclareMathOperator*{\Intxn}{\bigcap}

\DeclareMathOperator*{\Union}{\bigcup}

\DeclareMathOperator{\Var}{Var}

\DeclareMathOperator{\diag}{diag}


\providecommand{\abs}[1]{\lvert#1\rvert}
\providecommand{\Abs}[1]{\left\lvert#1\right\rvert}

\newcommand{\euclid}[1][d]{\mathbb{R}^{#1}}
\newcommand{\torus}[1][2]{\mathbb{T}^{#1}}

\newcommand{\wt}[1]{\widetilde{#1}}


\newcommand{\dimH}{\dim_{\mathrm{H}}}

\newcommand{\dimB}{\dim_{\mathrm{B}}}

\newcommand{\uDim}[1]{\overline{\dim}_{\mathrm{#1}}}


\newcommand{\haus}[2]{\mathcal{H}^{#1}(#2)}

\newcommand{\hausO}[3]{\mathcal{H}^{#1}_{#2}({#3})}
\newcommand{\Haus}[1]{\mathcal{H}^{#1}}


\newcommand{\tk}[1][]{\lfloor \theta_{#1} k \rfloor}

\newcommand{\ff}[1]{\prod_{i=1}^{s} f_{i}(#1)^{\theta_{i-1}-1}}
\newcommand{\ffi}[1]{\#\pi_{i}^{-1}(\tau_{i}(#1))}
\newcommand{\lang}[2]{\mathcal{L}_{#1}(X_{#2})}
\newcommand{\langX}[1][]{\lang{#1}{}}

\newcommand{\mFor}{\quad\text{ for }}
\newcommand{\pmat}[1]{\begin{pmatrix}#1\end{pmatrix}}
\newcommand{\dLY}[1]{\mathrm{d}_{\mathrm{LY}}(#1)}
\newcommand{\mAnd}{\quad\text{ and }\quad}

\newcommand{\BMcarpet}[1][]{Bedford\nobreakdash-McMullen carpet#1}
\newcommand{\BMsponge}[1][]{Bedford\nobreakdash-McMullen sponge#1}

\title{On the coincidence of the Hausdorff and box dimensions for some affine-invariant sets}

\author{Zhou Feng}
\address{Department of Mathematics\\
	The Chinese University of Hong Kong\\
	Shatin,  Hong Kong}
\curraddr{}
\email{\href{mailto: zfeng@math.cuhk.edu.hk}{zfeng@math.cuhk.edu.hk}}
\thanks{}

\subjclass[2020]{28A80, 37D35}
\keywords{Hausdorff dimension, box dimension, Hausdorff measure, nonconformal invariant set}

\date{}
\dedicatory{}
\thanks{}

\begin{document}
\begin{abstract}
	Let $ K $ be a compact subset of the $d$-torus invariant under an expanding diagonal endomorphism with $ s $ distinct eigenvalues. Suppose the symbolic coding of $K$ satisfies weak specification. When $ s \leq 2 $, we prove that the following three statements are equivalent: (A) the Hausdorff and box dimensions of $ K $ coincide; (B) with respect to some gauge function, the Hausdorff measure of $ K $ is positive and finite; (C) the Hausdorff dimension of the measure of maximal entropy on $ K $ attains the Hausdorff dimension of $ K $. When $ s \geq 3 $, we find some examples in which (A) does not hold but (C) holds, which is a new phenomenon not appearing in the planar cases. Through a different probabilistic approach, we establish the equivalence of (A) and (B) for Bedford-McMullen sponges.
\end{abstract}

\maketitle

\section{Introduction}

The Hausdorff and box dimensions are the most common notions to quantify the size of sets in fractal geometry; see~\cite{Falconer2003,Mattila1995} for an introduction. It is natural to ask under which condition these two dimensions coincide, especially for invariant sets in dynamical systems. This is an important question and has a long history. Furstenberg~\cite{Furstenberg1967} showed that if $ K $ is a compact set invariant under an expanding conformal endomorphism on the $d$-torus, then
\begin{equation}\label{eq:DimCoin}\tag{A}
	\dimH K = \dimB K,
\end{equation}
where $ \dimH $ and $ \dimB $ stand for the Hausdorff and box dimensions respectively. Previously, \eqref{eq:DimCoin} has been proved to hold in various conformal settings~\cite{Falconer1989,GatzourasPeres1997,Barreira1996} and some typical nonconformal settings~\cite{Falconer1988,KaeenmaekiVilppolainen2010}. Notably, Falconer proved that \eqref{eq:DimCoin} holds for all self\nobreakdash-similar sets~\cite{Falconer1989} and some typical self-affine sets~\cite{Falconer1988}. In contrast, the Hausdorff and box dimensions are usually distinct in specific nonconformal settings. In this paper, we study when \eqref{eq:DimCoin} holds for the invariant sets in the following family of nonconformal dynamical systems. Let $ T $ be an expanding linear endomorphism on the $ d $-torus $ \torus[d] = \bbR^{d}/\bbZ^{d} $ represented by a diagonal integer matrix
\begin{equation*}
	\Lambda = \diag (m_{1}, \ldots, m_{d}),
\end{equation*}
where $ m_{1} \geq \cdots \geq m_{d} \geq 2 $, that is, $ T(x) = \Lambda x \mod 1 $ for $ x \in [0,1)^{d}$. Write the number of distinct expanding ratios as
\begin{equation}\label{eq:def-s}
	 s = \#  \{m_{i}: 1 \leq i \leq d\} ,
\end{equation}
 where $\# $ stands for cardinality. Let
$ \calA = \prod_{j=1}^{d} \{ 0, \ldots, m_{j} - 1 \} $.
There is a canonical representation map $ R \colon \calA^{\bbN} \to \torus[d] $ given by
\begin{equation}\label{def:R-repr}
	R(x) = \sum_{k=1}^{\infty} \Lambda^{-k} x_{k} \quad \text{ for } x = (x_{k})_{k=1}^{\infty} \in \calA^{\bbN}.
\end{equation}
Let $ \sigma $ be the (left) shift on $\calA^{\bbN}$ defined as  $ \sigma \left ((x_{k})_{k=1}^{\infty}\right ) = (x_{k+1})_{k=1}^{\infty} $. Endow $ \calA^{\bbN} $ with the product topology. A closed subset $ X $  of $ \calA^{\bbN}$ is called a \textit{subshift} if $\sigma(X) \subset X $. Since $ R $ is a continuous surjective map and $ R\circ \sigma = T \circ R $, there is an one-to-one correspondence between the subshifts of $ \calA^{\bbN}$ and the compact $T$-invariant subsets of $ \bbT^{d}$. When $ K  =  R(\calD^{\bbN}) $ for some nonempty subset $ \calD $ of $ \calA$, we call $K$ a \textit{Bedford-McMullen carpet} if $ d = 2$ and a \textit{Bedford-McMullen sponge} if $ d \geq 3 $.

Given a continuous increasing function $ \varphi \colon [0, \infty) \to [0, \infty) $ with $ \varphi(0) = 0 $, the Hausdorff measure $ \haus{\varphi}{E} $ for $ E \subset \euclid[d] $ with respect to the \textit{gauge function} $\varphi $ is
\begin{equation*}
	\haus{\varphi}{E} = \lim_{\delta\to 0} \, \inf \left \{ \sum_{i=1}^{\infty} \varphi(\abs{E_{i}}) \colon \{E_{i}\}_{i=1}^{\infty} \text{ is a cover of } E \text{ and } \abs{E_{i}} \leq \delta \text{ for all } i \right \},
\end{equation*}
where $ \abs{F} $ denotes the diameter of a set $ F \subset \euclid[d] $ in Euclidean metric. Write  $ \calH^{\varphi} $ as $ \calH^{\gamma} $ if $ \varphi(t) = t^{\gamma}$, $\gamma \geq 0 $. We say that there exists a gauge function for $ K \subset \euclid$ if
\begin{equation}\label{eq:ExistGauge}\tag{B}
	0 < \haus{\varphi}{K} < \infty \mFor \text{some gauge function } \varphi.
\end{equation}
In this paper, for a compact $T$-invariant set $ K $, we aim to characterize the coincidence of dimensions as in \eqref{eq:DimCoin} by the existence of gauge functions as in \eqref{eq:ExistGauge}.




Our research target is motivated as follows. In 1980s, Bedford~\cite{Bedford1984} and McMullen~\cite{McMullen1984} independently computed the Hausdorff and box dimensions of \BMcarpet[s]. Later their work was extended by Kenyon and Peres~\cite{KenyonPeres1996,KenyonPeres1996a} to \BMsponge[s] and certain sofic affine-invariant sets; see \cite{LalleyGatzouras1992,Baranski2007,FengWang2005,Fraser2012a} for some other generalizations. For a \BMcarpet\ $K$, McMullen~\cite{McMullen1984} pointed out that  $ 0 < \haus{\dimH K}{K} < \infty $ if $\dimH K = \dimB K $, and asked about the value of $\haus{\dimH K}{K}$ when $ \dimH K < \dimB K $. This question was answered by Peres \cite{Peres1994} who proved that $ \haus{\dimH K}{K} = \infty $ if $ \dimH K < \dimB K $. Afterwards Peres~\cite{Peres1994a} showed that if $ \dimH K < \dimB K $, the packing measure of $K$ at its packing dimension is also infinite. Before the result of Peres~\cite{Peres1994}, an elegant argument found independently by Bedford and Mandelbrot (unpublished) and by Gatzouras and Lalley~\cite{LalleyGatzouras1992} gives the equivalence of \eqref{eq:DimCoin} and \eqref{eq:ExistGauge} for \BMcarpet[s] (see e.g.\ \cite[Proposition 2]{Peres1994}). Actually, Gatzouras and Lalley~\cite{LalleyGatzouras1992} showed that $ \dimH K = \dimB K $ is equivalent to $ 0 < \haus{\dimH K}{K} < \infty $ for a \textit{Gatzouras\nobreakdash-Lalley carpet} $ K $ which is a generalization of \BMcarpet\ by relaxing the grid structure but keeping the dominated directions. These previous results motivate us to study the equivalence of \eqref{eq:DimCoin} and \eqref{eq:ExistGauge} for invariant subsets of \BMsponge[s] (also called \textit{sub-self-affine sets}~\cite{Falconer1995,KaeenmaekiVilppolainen2010}).

To state our results, we introduce another related condition that the Hausdorff dimension of the measure of maximal entropy attains the Hausdorff dimension of its support. For a subshift $ X $ of $ \calA^{\bbN} $, write
\begin{equation*}
	\lang{k}{} = \{ i_{1} \cdots i_{k} \in \calA^{k} \colon i_{1}\cdots i_{k} = x_{1}\cdots x_{k} \text{ for some } (x_{j})_{j=1}^{\infty} \in X \}, \quad k \in \bbN
\end{equation*}
and
\begin{equation*}
	\langX  = \Union_{k=1}^{\infty} \lang{k}{} \Union \{\varnothing\},
\end{equation*}
where $ \varnothing $ denotes the empty word. For $ I \in \calA^{k}$, denote $ \abs{I} = k $. Set $ \abs{\varnothing} = 0 $. Let $ IJ $ denote the juxtaposition of $ I, J \in \calL(X)$. We say a subshift $ X $ satisfies \textit{weak specification} if there exists an integer $ p $ such that for every $ I, J \in \calL(X) $, there is $ W \in \calL(X) $ with $ \abs{W} \leq p $ such that $ IWJ \in \calL(X) $. Let $ X $ be a subshift satisfying weak specification and $ K = R(X)$. There is a unique $\sigma$\nobreakdash-invariant \textit{measure of maximal entropy} $ \mu $ on $ X $ (see \autoref{coro:parry}). Then we give another condition that
\begin{equation}\label{eq:MeasCoin} \tag{C}
	\dimH R\mu = \dimH K,
\end{equation}
where $ R\mu = \mu \circ R^{-1} $ is the push-forward of $ \mu $ under $ R $, and $ \dimH R\mu$ is the Hausdorff dimension of $ R\mu $ defined to be the infimum of the Hausdorff dimension of Borel sets with positive $R\mu$-measure. 


Now we are ready to state our first result.

\begin{theorem}\label{thm:DimCoin}
	Let $ X $ be a subshift satisfying weak specification and $ K = R(X) $. Recall the definition of $ s $ from \eqref{eq:def-s}. Then the following statements hold.
	\begin{enumerate}[(i)]
		\item\label{itm:A=>B} \eqref{eq:DimCoin} implies $ 0 < \haus{\dimH K}{K} < \infty $, and so \eqref{eq:ExistGauge}.
		
		\item\label{itm:B=>C} \eqref{eq:ExistGauge} implies \eqref{eq:MeasCoin}.
		
		\item\label{itm:s<=2} If $ s \leq 2 $, then \eqref{eq:DimCoin}, \eqref{eq:ExistGauge}, \eqref{eq:MeasCoin} are equivalent.
		
		\item\label{itm:s>=3} If $s \geq 3 $, there are \BMsponge[s] in which \eqref{eq:DimCoin} does not hold but \eqref{eq:MeasCoin} holds.
	\end{enumerate}
\end{theorem}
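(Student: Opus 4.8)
The plan is to route everything through three structural facts about these diagonal systems (due to Kenyon--Peres, or reproved in the present generality). Let $M_{1}>\dots>M_{s}$ be the distinct values among the $m_{i}$, write $\Pi_{k}$ for the projection of $X$ onto the coordinates whose expanding ratio is among the $k$ smallest of these, set $X_{k}=\Pi_{k}X$, $h_{k}=h_{\mathrm{top}}(X_{k})$, $h=h_{\mathrm{top}}(X)$, and $a_{k}:=1/\log M_{s-k+1}$, so that $a_{1}>\dots>a_{s}>0$. Then (1) for every ergodic $\sigma$-invariant $\nu$ one has the Ledrappier--Young identity $\dimH R\nu=\sum_{k=1}^{s-1}(a_{k}-a_{k+1})h(\Pi_{k}\nu)+a_{s}h(\nu)$; (2) $\dimH K=\sup_{\nu}\dimH R\nu$; and (3) $\dimB K=\sum_{k=1}^{s-1}(a_{k}-a_{k+1})h_{k}+a_{s}h=:\beta$, the same expression with each projected entropy replaced by its maximal value. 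Since all coefficients are nonnegative and $h(\Pi_{k}\nu)\le h_{k}$, $h(\nu)\le h$, these give $\dimH R\nu\le\beta$ for every $\nu$, hence $\dimH K\le\dimB K$. The key first step is to prove the reformulation: \eqref{eq:DimCoin} holds if and only if $h(\Pi_{k}\mu)=h_{k}$ for every $k\le s-1$, i.e.\ if and only if every projection of the measure of maximal entropy $\mu$ is again a measure of maximal entropy. For the nontrivial direction one picks $\nu_{n}$ with $\dimH R\nu_{n}\to\beta$, which forces $h(\nu_{n})\to h$ and $h(\Pi_{k}\nu_{n})\to h_{k}$; upper semicontinuity of entropy on the compact space of invariant measures, together with uniqueness of $\mu$ (\autoref{coro:parry}), identifies any weak-$*$ limit as $\mu$ and upgrades the limits to equalities. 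Feeding $h(\Pi_{k}\mu)=h_{k}$ back into (1) gives $\dimH R\mu=\beta=\dimH K$, so this reformulation already yields that \eqref{eq:DimCoin} implies \eqref{eq:MeasCoin} for all $s$.

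For part (i), $\haus{\dimH K}{K}<\infty$ is immediate from $\dimB K=\dimH K$ by covering $K$ with its $\asymp\delta^{-\dimB K}$ approximate cubes of scale $\delta$; for $\haus{\dimH K}{K}>0$ I would use $R\mu$, noting that under \eqref{eq:DimCoin} every $\Pi_{k}\mu$ is a measure of maximal entropy, so by its Gibbs property (the weak-specification version of \autoref{coro:parry}) the $R\mu$-mass of an approximate cube of scale $\delta$ is comparable, with a uniform constant, to $\delta^{\dimH R\mu}=\delta^{\dimH K}$; since a ball of radius $r$ meets boundedly many such cubes, the mass distribution principle finishes. For part (ii), a gauge $\varphi$ with $0<\haus{\varphi}{K}<\infty$ is rigid: with $m$ the restriction of $\mathcal{H}^{\varphi}$ to $K$, Rogers--Taylor density theorems give $m(B(x,r))\asymp\varphi(r)$ along a relevant set of scales, and comparing the pull-back of $m$ with the hierarchy of cubes forces $-\log\varphi(M_{s}^{-n})$ to grow like an affine combination of the ``layer counts'' of $X$, with finiteness forbidding any layer from contributing below its full entropy and positivity forbidding it from exceeding it. An ergodic-averaging argument then shows that the invariant measure encoded by $\varphi$ has maximal fibre entropy at every level, hence projects to measures of maximal entropy, hence (by (1)) realizes $\dimH K$ while being $\mu$; this is the Bedford--Mandelbrot / Gatzouras--Lalley mechanism read off one scale-layer at a time.

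For part (iii): when $s=1$ the endomorphism is conformal and \eqref{eq:DimCoin} always holds (Furstenberg), so there is nothing to prove; assume $s=2$. There is then a single intermediate projection and $\dimH R\nu=a_{1}h(\Pi_{1}\nu)+a_{2}\bigl(h(\nu)-h(\Pi_{1}\nu)\bigr)$ with $a_{1}>a_{2}>0$. Maximizing the relative (fibre) entropy $h(\nu)-h(\Pi_{1}\nu)$ for a fixed value of $\Pi_{1}\nu$ reduces the variational problem to maximizing $a_{2}\Psi(m)+(a_{1}-a_{2})h(m)$ over $m\in\mathcal{M}_{\sigma}(X_{1})$, where $\Psi(m):=\max\{h(\nu):\Pi_{1}\nu=m\}$ is concave with unique maximizer $\Pi_{1}\mu$, while $h(m)$ is maximized only at the measure of maximal entropy of $X_{1}$. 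A perturbation of $\Pi_{1}\mu$ toward the latter strictly increases the objective unless $\Pi_{1}\mu$ is already that measure; hence $\mu$ is dimension-optimal if and only if $h(\Pi_{1}\mu)=h_{1}$, which by the reformulation above is exactly \eqref{eq:DimCoin}. Together with parts (i)--(ii) this closes the loop \eqref{eq:DimCoin} $\Leftrightarrow$ \eqref{eq:ExistGauge} $\Leftrightarrow$ \eqref{eq:MeasCoin} for $s\le 2$. Making the perturbation step rigorous for merely weak-specification subshifts---where one must invoke the thermodynamic formalism rather than finite-type combinatorics---is the delicate point here.

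For part (iv), take $X=\calD^{\bbN}$ a full shift, so $\mu$ is uniform Bernoulli and, by Kenyon--Peres, $\dimH K=\max_{q}\dimH R\nu_{q}$ over probability vectors $q$ on $\calD$, a strictly concave maximization. Writing $n_{k}(d):=\#\{d'\in\calD:\Pi_{k}d'=\Pi_{k}d\}$, a Lagrange computation shows that the uniform vector is optimal---i.e.\ \eqref{eq:MeasCoin} holds---if and only if $\sum_{k=1}^{s-1}(a_{k}-a_{k+1})\log n_{k}(d)$ is independent of $d$, whereas \eqref{eq:DimCoin} requires each $n_{k}(d)$ to be separately constant; for $s=2$ these coincide, but for $s\ge 3$ the fibre sizes can be made to trade off. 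Concretely I would take $\Lambda=\diag(64,16,8)$, so $s=3$ and $a_{1}-a_{2}=a_{2}-a_{3}$ (whence \eqref{eq:MeasCoin} reduces to $n_{1}(d)n_{2}(d)$ being constant), and $\calD=\{(0,j,0):0\le j\le 3\}\cup\{(i,0,1):0\le i\le 1\}$: then $n_{1}(d)\in\{4,2\}$ is not constant, so \eqref{eq:DimCoin} fails, yet $n_{1}(d)n_{2}(d)\equiv 4$, so \eqref{eq:MeasCoin} holds (a short computation gives $\dimH K=\dimH R\mu\approx 0.695<0.708\approx\dimB K$), and the construction extends to every $s\ge 3$ by adjoining coordinate blocks with uniform fibres. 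The main obstacles I anticipate are not this construction but (a) carrying the Gibbs and thermodynamic estimates through for weak-specification subshifts rather than subshifts of finite type, which is needed throughout parts (i), (ii) and the $s=2$ case of (iii), and (b) in part (ii), pinning down precisely which invariant measure a hypothetical gauge function encodes---the identification of that measure with $\mu$ is where I expect the real work to lie.
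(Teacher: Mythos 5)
Your part (i), your reformulation of \eqref{eq:DimCoin} as ``every projection of the measure of maximal entropy is again a measure of maximal entropy'', and your part (iv) (which is exactly the paper's \autoref{ex:d=3}, with the same dimension values) follow the paper's route. One caution in (i): finiteness of $\haus{\dimH K}{K}$ is not ``immediate'' from $\dimH K=\dimB K$, since knowing the box dimension only gives $\delta^{-\dimB K-o(1)}$ cubes; what you need is the sharp count $\#\calQ_{k}(X)\lesssim n_{s}^{k\dimH K}$ with a uniform constant, and that is exactly where weak specification enters (through $\#\calL_{k}(X_{i})\approx e^{kh(X_{i})}$, \autoref{coro:parry} and \autoref{lem:CardQk-Wsp}). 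Similarly only the upper bound $R\mu(Q_{k}(x))\lesssim n_{s}^{-k\dimH K}$ is available (and is all that is needed), via the fiber bound $\#\tau_{i}^{-1}(I)\lesssim e^{\abs{I}(h(X)-h(X_{i}))}$; your two-sided ``comparable with a uniform constant'' overstates what holds for general subshifts.

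The genuine gaps are in (ii) and (iii), which carry the new content. In (ii), your mechanism, if it worked, would show that a gauge with $0<\haus{\varphi}{K}<\infty$ forces every layer to contribute its full entropy, i.e.\ \eqref{eq:ExistGauge}$\Rightarrow$\eqref{eq:DimCoin}; that is strictly stronger than \eqref{eq:ExistGauge}$\Rightarrow$\eqref{eq:MeasCoin}, goes beyond what the paper can prove for $s\geq 3$ (even for sponges it needs the separate probabilistic argument of \autoref{thm:SierSponge}), and you yourself flag the identification of the encoded measure as unresolved. The missing idea is much softer: translation invariance of $\Haus{\varphi}$ plus weak specification gives $\haus{\varphi}{R([I])\cap K}\approx\haus{\varphi}{\Lambda^{-k}K}$ uniformly over $I\in\calL_{k}(X)$ (\autoref{lem:HausCylinder=}), hence $\Haus{\varphi}|_{K}(R([I]))\approx 1/\#\calL_{k}(X)\approx\mu(I)$, so $\Haus{\varphi}|_{K}\approx R\mu$ and \eqref{eq:MeasCoin} follows with no analysis of the gauge at all (\autoref{prop:HausEqParry}). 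In (iii), the perturbation step fails as stated: $h$ is affine on measures and $\Psi$ is only concave, so moving $\Pi_{1}\mu$ toward the measure of maximal entropy $\bar m$ of the projected shift gains $(a_{1}-a_{2})\bigl(h(\bar m)-h(\Pi_{1}\mu)\bigr)$ at first order but may also lose up to $a_{2}\bigl(\Psi(\Pi_{1}\mu)-\Psi(\bar m)\bigr)$ at first order (a concave function can decrease linearly away from its maximizer), and concavity gives no control of the sign of the net change; optimality of $\Pi_{1}\mu$ therefore does not force $h(\Pi_{1}\mu)=h_{1}$ by soft variational reasoning. The paper's proof of \autoref{lem:ParryFullDim=>DimEq} uses the structural input you omit: under \eqref{eq:MeasCoin} the measure of maximal entropy coincides (by uniqueness) with the measure of full dimension, whose projection to the weakest direction has the Gibbs property \eqref{eq:GibbsProjMeas}; matching this against the Gibbs property of $\pi_{2}\mu$ coming from maximal entropy forces $\#\pi_{2}^{-1}(J)$ to be essentially constant over $J\in\calL_{k}(X_{2})$, whence $P=\theta_{1}h(X_{1})+(1-\theta_{1})h(X_{2})$, i.e.\ \eqref{eq:DimCoin}. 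Without a Gibbs-type input of this kind your $s=2$ argument does not close.
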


In \cite[P.~526]{Peres1994}, Peres conjectured that when $d = 2$ and $ K = R(X) $ where $ X $ is a primitive subshift of finite type, $ \Haus{\dimH K}(K) = \infty $ if $ \dimH K < \dimB K $. Since such $X$ satisfies weak specification, it follows from \autoref{thm:DimCoin}\ref{itm:s<=2} that $ \Haus{\dimH K }(K) $ is either $ 0 $ or $ \infty $ if $ \dimH K < \dimB K$.

Let us give some ideas about the proof of \autoref{thm:DimCoin}. Firstly, we characterize \eqref{eq:DimCoin} in terms of the size of the fibers of certain factor maps; see \autoref{thm:Ext-KP-DimCoin}\ref{itm:UniFiber-Entropy}, which is by adapting and extending some arguments of \cite{KenyonPeres1996} to the setting of weak specification. Based on this characterization and the Gibbs property of the measure of maximal entropy, we obtain an upper bound on the density of $R\mu$, leading to the part of \ref{itm:A=>B} that $\calH^{\dimH K} > 0 $. Unlike Bedford-McMullen carpets corresponding to the full shifts, in the case of subshifts it is difficult to estimate the density of $ R\mu $ from below. Instead of following the approach based on the density to prove the other part of \ref{itm:A=>B} that $ \calH^{\dimH K} < \infty $, we directly consider the covers consisting of the approximate cubes whose number is estimated using weak specification. The proof of \ref{itm:B=>C} is based on the translation-invariance of Hausdorff measures and the Gibbs property of the measure of maximal entropy; see \autoref{prop:HausEqParry}. Next we move to the proof of \ref{itm:s<=2} which is more difficult and requires new ideas since the Hausdorff dimension of $T$-invariant sets is expressed as a certain topological pressure instead of the simple formula for Bedford-McMullen carpets. By \ref{itm:A=>B} and \ref{itm:B=>C}, it suffices to show that \eqref{eq:MeasCoin} implies \eqref{eq:DimCoin} when $ s \leq 2 $; see \autoref{lem:ParryFullDim=>DimEq}. To this end, we manage to compute the Hausdorff dimension of the invariant set based on the key observation that the projection of the measure of full dimension to the weakest unstable direction always has the Gibbs property. As for \ref{itm:s>=3}, we give some examples of \BMsponge[s] (\autoref{ex:d=3} and \autoref{ex:d>=4}) with the desired properties, which is a new phenomenon not appearing in the planar cases.

To determine the Hausdorff measures of \autoref{ex:d=3} and \autoref{ex:d>=4}, we manage to give a sufficient condition for \BMsponge[s] to have infinite Hausdorff measures (see \autoref{prop:CriterionInfHaus}), and conclude that \autoref{ex:d=3} and \autoref{ex:d>=4} have infinite Hausdorff measures at their respective Hausdorff dimensions. The proof of \autoref{prop:CriterionInfHaus} is based on the method of Peres~\cite{Peres1994} (see \autoref{prop:PeresInRd}) and a newfound relation satisfied by the measure of full dimension (see \autoref{lem:Sum-delta-i}).  It is worth pointing out that in \cite[Section 5]{Peres1994} Peres mentioned the result (\cite[Theorem 1]{Peres1994}), that $\haus{\dimH K}{K} = \infty$ if $\dimH K < \dimB K $ for a \BMcarpet\ $ K $, can extend to \BMsponge[s]. However, he did not give a detailed justification.

The equivalence of \eqref{eq:DimCoin} and \eqref{eq:ExistGauge} may still hold when $ s \geq 3 $. However, \ref{itm:s>=3} shows that our current strategy for the proof of \ref{itm:s<=2} can not extend to $ s \geq 3 $. Nevertheless, through a different probabilistic approach we obtain the equivalence of \eqref{eq:DimCoin} and \eqref{eq:ExistGauge} for \BMsponge[s].
\begin{theorem}\label{thm:SierSponge}
	 \eqref{eq:DimCoin} and \eqref{eq:ExistGauge} are equivalent for Bedford-McMullen sponges.
\end{theorem}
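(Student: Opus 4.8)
The plan is to prove the equivalence of \eqref{eq:DimCoin} and \eqref{eq:ExistGauge} for a Bedford-McMullen sponge $K = R(\calD^{\bbN})$ by a direct probabilistic analysis of the natural covering of $K$ by level-$k$ approximate cubes, bypassing the topological-pressure machinery used for general subshifts. Recall the Kenyon--Peres formula: writing $\theta_i = \log m_{d} / \log m_{i}$-type ratios for the distinct expansion rates and grouping coordinates into blocks $1 = t_0 < t_1 < \cdots < t_s = d$ according to equal $m_j$, the Hausdorff dimension of $K$ is $\dimH K = \sum_{i=1}^{s} (\log N_i - \log N_{i-1})/\log m_{t_i}$ where $N_i$ counts distinct projections of $\calD$ onto the first $t_i$ coordinates (with $N_0 = 1$), while $\dimB K$ is the same expression with each $N_i$ replaced by $N_s^{1/s}$-style averaging — concretely $\dimB K \ge \dimH K$ always, with equality iff the ``branching numbers'' $N_i / N_{i-1}$ are, in an appropriate logarithmic sense, constant across the blocks after weighting by $\log m_{t_i}$. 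The first step is to make this combinatorial equality condition precise and to recall from the literature (Peres~\cite{Peres1994}, Kenyon--Peres~\cite{KenyonPeres1996}) that $\dimH K = \dimB K$ forces the number of level-$k$ approximate cubes to be $\asymp m_{d}^{k \dimH K}$ up to subexponential (indeed, I expect polynomial) factors.

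The direction \eqref{eq:DimCoin} $\Rightarrow$ \eqref{eq:ExistGauge} is essentially \autoref{thm:DimCoin}\ref{itm:A=>B} specialized to the sponge (a sponge arises from the full shift on $\calD^{\bbN}$, which trivially satisfies weak specification with $p = 0$), so the real content is \eqref{eq:ExistGauge} $\Rightarrow$ \eqref{eq:DimCoin}, equivalently the contrapositive: if $\dimH K < \dimB K$ then no gauge function works, which by \autoref{prop:PeresInRd}/\autoref{prop:CriterionInfHaus}-type reasoning reduces to showing $\haus{\varphi}{K} = \infty$ for every candidate $\varphi$, and in fact it suffices (by a standard argument, e.g.\ the one attributed to Peres) to show $\haus{\dimH K}{K} = \infty$ together with a mass-distribution-style lower bound that upgrades to arbitrary gauges. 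Here is where the probabilistic approach enters: I would define a random measure on $K$ by, at each level, choosing the ``digit in the $i$-th block'' according to the block-$i$ conditional frequencies of a carefully tuned Bernoulli (or Markov) measure $\nu$ on $\calD^{\bbN}$ — the measure of maximal-dimension for the box dimension rather than the Hausdorff dimension — so that $\nu$-typical points sit in approximate cubes of $\nu$-measure $\approx m_d^{-k\dimB K}$, which is strictly smaller than the $m_d^{-k\dimH K}$ one would need. One then runs the second-moment / energy computation: estimate $\mathbb{E}\big[\haus{\varphi}{K}\big]$ from below by pairing the covering sum against $\nu$ and using that the expected $\varphi$-cost of covering a $\nu$-random approximate cube diverges as $k \to \infty$ precisely when $\dimB K > \dimH K$, because the relevant per-level multiplicative factor $\exp\big(\text{(block entropy defect)}\big)$ is bounded away from $1$.

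The key steps, in order: (1) fix notation for the sponge, record $\dimH K$, $\dimB K$, and the approximate-cube counting asymptotics; (2) dispose of \eqref{eq:DimCoin} $\Rightarrow$ \eqref{eq:ExistGauge} by quoting \autoref{thm:DimCoin}\ref{itm:A=>B} (or re-deriving $0 < \haus{\dimH K}{K} < \infty$ directly, which for the sponge is the Bedford--Mandelbrot/Gatzouras--Lalley argument of \cite[Proposition 2]{Peres1994} extended to higher dimension); (3) assume $\dimH K < \dimB K$ and construct the random self-affine measure $\eta$ on $K$ by independently sampling, at each level and in each block, the block-coordinates of a digit from $\calD$ according to a fixed optimal product measure, arranged so that the natural filtration is a martingale; (4) show that for any gauge $\varphi$ with $\haus{\varphi}{K} < \infty$ one would get, via Frostman/mass-distribution applied to $\eta$ and a Borel--Cantelli argument along the approximate-cube scales $m_d^{-k}$, that $\varphi(m_d^{-k}) \gtrsim \eta(\text{typical cube}) \approx m_d^{-k\dimB K}$ infinitely often, while simultaneously $\varphi(m_d^{-k}) \lesssim m_d^{-k\dimH K}$ must hold to keep $\haus{\varphi}{K}$ from blowing up against the $\asymp m_d^{k\dimH K}$ cubes that genuinely cover $K$ — and these two are incompatible since $\dimB K > \dimH K$; (5) conclude \eqref{eq:ExistGauge} fails. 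The main obstacle I anticipate is step (4): making the random measure's approximate-cube masses concentrate sharply enough (a large-deviations estimate across the $s$ blocks, with the block sizes $\lfloor \theta_i k\rfloor$ not exact multiples of each other) so that the "infinitely often" lower bound on $\varphi(m_d^{-k})$ is clean — essentially one needs the analogue of Peres's pigeonhole/counting lemma (\cite[Theorem 1]{Peres1994}) in which the mismatch between the coarse scale $m_d^{-\lfloor\theta_i k\rfloor}$ in block $i$ and the finest scale $m_d^{-k}$ is controlled uniformly, and verifying that the resulting error terms are subexponential in $k$ so they cannot absorb the exponential gap $m_d^{k(\dimB K - \dimH K)}$. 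The rest is bookkeeping with the Kenyon--Peres dimension formulas and standard Hausdorff-measure estimates.
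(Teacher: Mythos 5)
Your step (2) — the direction \eqref{eq:DimCoin} $\Rightarrow$ \eqref{eq:ExistGauge} — is fine, since it is exactly \autoref{thm:DimCoin}\ref{itm:A=>B} applied to the full shift $\calD^{\bbN}$. The converse, which is the real content, does not go through as planned. The contradiction you build in steps (3)--(4) rests on two false quantitative claims. First, there is no measure $\eta$ on $K$ whose typical approximate-cube mass is $\approx n_{s}^{-k\dimB K}$ when $\dimH K<\dimB K$: every Borel probability measure $\nu$ on $K$ has $\dimH \nu\le\dimH K$, so its a.e.\ local dimension along the cubes $Q_{k}(x)$ cannot equal $\dimB K$. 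Second, the assertion that ``$\varphi(n_{s}^{-k})\lesssim n_{s}^{-k\dimH K}$ must hold to keep $\haus{\varphi}{K}$ from blowing up against the $\asymp n_{s}^{k\dimH K}$ cubes that genuinely cover $K$'' is unfounded: the number of level-$k$ approximate cubes needed to cover $K$ is $\approx n_{s}^{k\dimB K}$ (that is precisely what box dimension measures), and in any case covers only bound $\Haus{\varphi}$ from above, so their existence can never force $\haus{\varphi}{K}=\infty$; information about an unknown gauge has to be extracted from the positivity side via a measure with two-sided control of cube masses. Moreover, even granting both of your bounds, they are compatible rather than contradictory: since $n_{s}^{-k\dimB K}\le n_{s}^{-k\dimH K}$, the conditions ``$\varphi(n_{s}^{-k})\gtrsim n_{s}^{-k\dimB K}$ infinitely often'' and ``$\varphi(n_{s}^{-k})\lesssim n_{s}^{-k\dimH K}$'' are satisfied simultaneously by every $\varphi(r)=r^{\gamma}$ with $\gamma\in[\dimH K,\dimB K]$, so no contradiction results. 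Likewise, your fallback reduction to ``show $\haus{\dimH K}{K}=\infty$ and upgrade'' misses the point that \eqref{eq:ExistGauge} quantifies over all gauges; killing the power gauge (or the Peres-type gauges of \autoref{prop:PeresInRd}) does not kill an arbitrary $\varphi$.

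More fundamentally, no argument conducted purely at the level of exponents or dimensions of auxiliary measures can settle this for sponges: \autoref{ex:d=3} gives a sponge with $\dimH K<\dimB K$ in which the measure of maximal entropy nonetheless has full Hausdorff dimension, so the exponential-scale data your plan compares are indistinguishable from those of a sponge with coinciding dimensions (this is exactly why the classical carpet argument, (B)$\Rightarrow$(C)$\Rightarrow$(A), fails here). The paper's proof instead extracts sub-exponential information from \eqref{eq:ExistGauge}: \autoref{prop:HausEqParry} (translation invariance of $\Haus{\varphi}$ plus the Gibbs property) identifies $\Haus{\varphi}|_{K}\approx R\mu$ for the uniform Bernoulli measure $\mu$; the density theorem then forces the ratio $R\mu(Q_{k}(x))/\varphi(n_{s}^{-k})$ to stay in a fixed multiplicative window for $\mu$-typical $x$; and since $\log R\mu(Q_{k}(x))$ is a sum of $k$ independent bounded random variables whose variance grows linearly as soon as some fiber count $\#\tau_{i}^{-1}(\tau_{i}(\cdot))$ is non-constant on $\calD$ (\autoref{prop:DimCoin-Sier}), Lyapunov's central limit theorem shows that such a bounded window has vanishing probability — the contradiction. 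Your proposal contains no substitute for either ingredient: no mechanism identifying $\Haus{\varphi}|_{K}$ with a computable measure (indispensable because $\varphi$ is arbitrary), and no fluctuation-scale (order $\sqrt{k}$, CLT/LIL) analysis, which is the only place the gap $\dimH K<\dimB K$ actually becomes visible. As written, the plan does not prove \eqref{eq:ExistGauge} $\Rightarrow$ \eqref{eq:DimCoin}.
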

The proof of \autoref{thm:SierSponge} is based on the independence of the coordinates of a random sequence whose law is a Bernoulli measure on $\calA^{\bbN}$ and the density theorem for the Hausdorff measures. To the best of our knowledge, the previous known proofs for the equivalence of \eqref{eq:DimCoin} and \eqref{eq:ExistGauge} for \BMcarpet[s] essentially rely on the equivalence of \eqref{eq:DimCoin} and \eqref{eq:MeasCoin} (see e.g.\ \cite[Proposition 2]{Peres1994}), which does not work for \BMsponge[s] as seen in \autoref{ex:d=3}. We remark that a similar proof of \autoref{thm:SierSponge} based on the density theorem for the packing measures shows that for a Bedford-McMullen sponge $ K$, $\dimH K = \dimB K $ is equivalent to that, with respect to some (doubling) gauge function $ \varphi$, the packing measure of $ K $ is positive and finite. One may expect that \autoref{thm:SierSponge} can extend to some subshifts if the limit theorem used in the proof of \autoref{thm:SierSponge} holds under some weaker independence; see \autoref{rmk:Gen-LimThm} for an illustration.

The paper is organized as follows. In \autoref{sec:prelim-DimCoin}, we make some preparations for the proofs of main theorems. In \autoref{sec:Ext-KP}, we extend the results in \cite{KenyonPeres1996} to the setting of weak specification. \autoref{sec:Pf-DC=>EG} is for the proof of \autoref{thm:DimCoin}\ref{itm:A=>B}. The proofs of \ref{itm:B=>C} and \ref{itm:s<=2} of \autoref{thm:DimCoin} are given in \autoref{sec:EG=>DC}. \autoref{sec:SierSponge} is devoted to the results about \BMsponge[s] including the proof of \autoref{thm:SierSponge}. The examples for \autoref{thm:DimCoin}\ref{itm:s>=3} are given in \autoref{subsec:examples}.

\section{Preliminaries}\label{sec:prelim-DimCoin}

In \autoref{subsec:subshifts}, we introduce the subshifts and factor maps. Then we present some results about the measures of maximal entropy and full dimension respectively in \autoref{subsec:Gibbs} and \autoref{subsec:FullDim}. The last subsection is devoted to the approximate cubes and the related results.

Throughout this paper we shall mean by $ a \lesssim b $ that $ a \leq Cb $ for some positive constant $ C $, and write $ a \approx b $ if $ a \lesssim b $ and $ b \lesssim a $. 



\subsection{Subshift and factor map}\label{subsec:subshifts}

Let $\calB$ be a finite set.  Let $ k \in \bbN $ and $ 0 \leq a < b \leq k $. For $ I = i_{1}\cdots i_{k} \in \calB^{k} $, define $ I|_{a}^{b} = i_{a+1}\cdots i_{b}$, $ \abs{I} = k $, and $ [I] = \{ x \in \calB^{\bbN} \colon x|k = I \}$. For $ I \in \calA^{k_{1}}, J \in \calA^{k_{2}}$, let $ IJ $ denote the juxtaposition of $ I$ and $J$. For $ x = (x_{i})_{i=1}^{\infty} \in \calB^{\bbN}$, write $ x|_{a}^{b} = x_{a+1} \ldots x_{b}$ and  $ x|k = x_{1}\cdots x_{k} $. By convention we set $ x|0 = \varnothing $ and $\abs{\varnothing} = 0 $, where $\varnothing $ denotes the empty word. Endow $\calB^{\bbN}$ with the product topology. Let $ \sigma $ be the shift on $\calB^{\bbN}$ defined by $ \sigma ( (x_{k})_{k=1}^{\infty} ) = (x_{k+1})_{k=1}^{\infty} $. A closed subset $ X $ of $\calB^{\bbN}$ is called a \textit{subshift} of $ \calB^{\bbN}$ if $\sigma(X) \subset X $. Let  $ \calM_{\sigma}(X) $ denote the set of $\sigma$-invariant measures on $ X $. Define
\begin{equation*}
	\lang{k}{} = \{ I \in \calB^{k} \colon [I] \intxn X \neq \emptyset \} \text{, }\,\, k\in\bbN\quad \text{ and }\quad \langX  = \Union_{k=1}^{\infty} \lang{k}{} \Union \{\varnothing\}.
\end{equation*}

\begin{definition}[weak specification] \label{def:WeakSpec}
	A subshift $ X $ of $ \calB^{\bbN}$ is said to satisfy \textit{weak specification} if there exists an integer $ p $ such that for every $ I, J \in \calL(X) $, there is $ W \in \calL(X) $ with $ \abs{W} \leq p $ such that $ IWJ \in \calL(X) $.
\end{definition}

Throughout this paper, by a subshift we mean a subshift of $ \calA^{\bbN}$ by default. Recall $\calA = \prod_{j=1}^{d}\{ 0, \ldots, m_{j}-1 \}$ and $ s = \# \{ m_{i} \colon 1 \leq i \leq d \} $. Since $ m_{1} \geq \cdots \geq m_{d} \geq 2 $, there are integers  $ n_{1} > \cdots > n_{s} $ and $ 0 = d_{0} <  d_{1} < \cdots < d_{s} = d $ such that for $ 1 \leq i \leq s $
\begin{equation}\label{eq:def-ni}
	m_{j} = n_{i}, \quad d_{i-1} < j \leq d_{i}.
\end{equation}
By convention we set $n_{0} = \infty$ and $\calA_{0} = \calA $.
For $ 1 \leq i \leq s $, define
\begin{equation*}
	\calA_{i} = \prod_{j>d_{i-1}} \{0, \ldots, m_{j}-1\}.
\end{equation*}
Let $ \pi_{1} \colon \calA \to \calA_{1}$ be the identity map. For $ 2 \leq i \leq s $, let $ \pi_{i} \colon \calA_{i-1} \to \calA_{i} $ be the projection by removing first $ (d_{i-1} - d_{i-2}) $ coordinates, that is,
\begin{equation*}
	\pi_{i}( (x_{d_{i-2}+1}, \ldots, x_{d_{i-1}}, x_{d_{i-1}+1}, \ldots, x_{d})) = (x_{d_{i-1}+1}, \ldots, x_{d}).
\end{equation*}
Naturally extend $ \pi_{i} $ from $ \calA_{i-1}$ to $\calA_{i-1}^{\bbN}$ by 
\begin{equation*}
	\pi_{i}(x) = (\pi_{i}(x_{k})) \mFor x = (x_{k}) \in \calA_{i-1}^{\bbN}.
\end{equation*}
For $ 1 \leq i \leq s $, define $ \tau_{i} \colon \calA^{\bbN} \to \calA_{i}^{\bbN}$ by $ \tau_{i} = \pi_{i} \circ \cdots \circ \pi_{1} $. By abuse of notation, for $1 \leq i \leq s $ we keep using $\sigma$ to denote the shift on $\calA_{i}^{\bbN}$, which shall not cause confusion since the domain of $ \sigma$ is always clear from the context. Then $ \pi_{i} \circ \sigma = \sigma \circ \pi_{i} $ and $ \tau_{i} \circ \sigma = \sigma\circ \tau_{i} $, that is, $ \pi_{i}$ and $\tau_{i}$ are factor maps. For a subshift $ X $ and $ 1 \leq i \leq s$, define a subshift of $ \calA_{i}^{\bbN}$ by $ X_{i} = \tau_{i}(X) $. Hence the following diagram commutes.
\begin{equation*}
	\begin{tikzcd}
		X \arrow{r}{\pi_{1}} \arrow[bend left=30]{r}{\tau_{1}} \arrow[bend left=47]{rr}{\tau_{i}} \arrow[bend left=52]{rrr}{\tau_{s}} & X_{1} \arrow{r}{\pi_{2}}  & \cdots X_{i} \cdots \arrow{r}{\pi_{s}} &  X_{s}
	\end{tikzcd}
\end{equation*}
For $1 \leq i \leq s $, by \autoref{def:WeakSpec} and the surjectivity of $ \tau_{i}$, if $ X $ satisfies weak specification, then so does $ X_{i} $.

\subsection{Measure of maximal entropy} \label{subsec:Gibbs}
Let $ \calB $ be a finite set and $ X $ be a subshift of $ \calB^{\bbN}$. The \textit{topological entropy} $ h(X)$ of $ X $ is 
\begin{equation}\label{eq:Def-TopEntropy}
	h(X) = \lim_{k\to\infty} \frac{1}{k} \log \# \calL_{k}(X),
\end{equation}
where the limit exists by a subadditivity argument. For $ \mu \in \calM_{\sigma}(X) $, the \textit{measure\nobreakdash-theoretic entropy} $h(\mu)$ of $\mu$ is
\begin{equation}\label{eq:def-MeasureEntropy}
	h(\mu) = \lim_{k\to\infty} \frac{1}{k}  \sum_{I \in \calL_{k}(X)} - \mu(I) \log \mu(I),
\end{equation}
where the limit exists by the invariance of $ \mu $ and a subadditivity argument. Note that $ h(\mu) \leq h(X)$ since $ \sum_{I \in \calL_{k}(X)} - \mu(I) \log \mu(I) \leq \log \# \calL_{k}(X)$ by convexity. We call $\mu$ a \textit{measure of maximal entropy} if $h(\mu) = h(X)$.

Suppose $X$ satisfies weak specification. Applying \cite[Theorem 5.5]{Feng2011} with $ \phi \equiv 1 $ on $ X $ shows that there is a unique measure of maximal entropy with the Gibbs property, an extension of the classical result of Parry \cite{Parry1964}.

\begin{proposition} \label{coro:parry}
	Let $ X $ be a subshift satisfying weak specification. Then there is a unique measure of maximal entropy $ \mu $ on $ X $. Moreover, $ \mu $ is the unique ergodic measure satisfying the following Gibbs property
	\begin{equation*}
		\mu(I) \approx \frac{1}{\#\calL_{\abs{I}}(X)} \approx \exp \left (- \abs{I}\, h(X)\right )  \quad \text{ for } \, I \in \calL(X),
	\end{equation*}
where $ h(X) $ is as in \eqref{eq:Def-TopEntropy}.
\end{proposition}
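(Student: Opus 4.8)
The plan is to derive this as a direct consequence of the general thermodynamic formalism for subshifts with weak specification, specifically \cite[Theorem 5.5]{Feng2011} applied to the constant potential $\phi \equiv 1$ (equivalently $\phi \equiv 0$), together with the variational principle. First I would recall that weak specification implies the subadditivity of $k \mapsto \log \#\calL_{k}(X)$ is in fact almost-additive: using \autoref{def:WeakSpec}, for $I \in \calL_{k}(X)$ and $J \in \calL_{\ell}(X)$ there is a connecting word $W$ with $\abs{W} \leq p$ and $IWJ \in \calL(X)$, which gives $\#\calL_{k+\ell+p}(X) \gtrsim \#\calL_{k}(X)\cdot\#\calL_{\ell}(X)$ (the map $(I,J) \mapsto$ a chosen $IWJ$ has fibers of controlled size since there are at most $\#\calL_{\leq p}(X)$ choices of $W$, a bound independent of $k,\ell$). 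Combined with the trivial submultiplicativity $\#\calL_{k+\ell}(X) \leq \#\calL_{k}(X)\cdot\#\calL_{\ell}(X)$, this yields two-sided bounds $\#\calL_{k}(X) \approx \exp(k\,h(X))$ with $h(X)$ as in \eqref{eq:Def-TopEntropy}, where the implied constants depend only on $p$ and $X$.

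Next I would invoke \cite[Theorem 5.5]{Feng2011}: for a subshift with weak specification and a potential with bounded variation (here the constant potential, which trivially has this property), there exists a unique equilibrium state $\mu$, it is ergodic, and it satisfies the Gibbs inequality $\mu([I]) \approx \exp\bigl(-\abs{I}\,P + S_{\abs{I}}\phi(x)\bigr)$ for $x \in [I]$, where $P$ is the topological pressure. Specializing to $\phi \equiv 1$ gives $P = h(X) + 1$ and $S_{\abs{I}}\phi \equiv \abs{I}$, so the exponents cancel to leave $\mu([I]) \approx \exp(-\abs{I}\,h(X))$; alternatively, taking $\phi \equiv 0$ directly gives $P = h(X)$ and the same conclusion. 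By the variational principle an equilibrium state for $\phi \equiv 0$ is precisely a measure maximizing $h(\mu)$, i.e.\ a measure of maximal entropy, so existence and uniqueness transfer. Finally, combining the Gibbs bound $\mu([I]) \approx \exp(-\abs{I}\,h(X))$ with the counting estimate $\#\calL_{\abs{I}}(X) \approx \exp(\abs{I}\,h(X))$ from the first step gives the displayed chain $\mu(I) \approx 1/\#\calL_{\abs{I}}(X) \approx \exp(-\abs{I}\,h(X))$.

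For the uniqueness-within-the-Gibbs-class statement, I would argue that any ergodic $\sigma$-invariant measure $\nu$ satisfying the Gibbs property must have entropy $h(X)$ — indeed, plugging the two-sided estimate $\nu(I) \approx \exp(-\abs{I}\,h(X))$ into the definition \eqref{eq:def-MeasureEntropy} gives $\frac{1}{k}\sum_{I}-\nu(I)\log\nu(I) = h(X) + O(1/k) \to h(X)$ — so $\nu$ is a measure of maximal entropy, hence equals $\mu$ by the uniqueness already established. This also needs the sum to run only over $I$ with $\nu(I) > 0$, but the Gibbs lower bound forces $\nu(I) > 0$ for every $I \in \calL_{k}(X)$ with $[I] \cap \supp\nu \neq \emptyset$; ergodicity and shift-invariance then make $\supp\nu = X$ (weak specification makes $X$ topologically transitive, indeed mixing), so in fact $\nu(I) > 0$ for all $I \in \calL_{k}(X)$.

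The only genuine subtlety — hardly an obstacle — is matching conventions with \cite[Theorem 5.5]{Feng2011}: one must check that the notion of weak specification used there coincides with \autoref{def:WeakSpec} (one-sided subshifts, gap length bounded by a single constant $p$), that the bounded-variation hypothesis on the potential is satisfied by constants, and that ``equilibrium state for the zero potential'' is identified with ``measure of maximal entropy'' via the variational principle $h_{\mathrm{top}} = \sup_{\mu}h(\mu)$ — all of which are standard. Everything else is bookkeeping with the $\approx$ notation and the elementary counting argument above.
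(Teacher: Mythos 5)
Your proposal is correct and follows essentially the same route as the paper, which proves this proposition simply by applying \cite[Theorem 5.5]{Feng2011} with the constant potential $\phi \equiv 1$ and identifying the resulting equilibrium state with the measure of maximal entropy. The extra details you supply (the almost-supermultiplicativity of $k \mapsto \#\calL_{k}(X)$ under weak specification and the argument that any ergodic Gibbs measure has full entropy) are standard and consistent with what the cited theorem delivers.
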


\subsection{Measure of full dimension} \label{subsec:FullDim} Roughly speaking, a measure of full dimension is an invariant measure having the same Hausdorff dimension as its support. It is an important topic in dynamical systems to determine the existence and uniqueness of the measure of full dimension on invariant sets~\cite{GatzourasPeres1997,Olivier2010,KenyonPeres1996a,Rams2005/06,Luzia2010,Feng2011,BarralFeng2011}; see \cite{GatzourasPeres1996} for a survey. In their celebrated work, Das and Simmons~\cite{DasSimmons2017} contructed a self-affine sponge whose Hausdorff dimension is strictly greater than the supremum of the Hausdorff dimensions of its invariant measures, thus showing that the measure of full dimension may not exist in an expanding nonconformal system. For other dimensional properties of invariant measures in nonconformal settings, very recently Kolossv\'ary~\cite{Kolossvary2023} computed the $L^{q}$ dimensions of self-affine measures  on self-affine sponges assuming a suitable separation condition, which generalizes the results in \cite{King1995,Olsen1998}.

For a subshift $X $ and $K = R(X)$, we call an invariant measure $ \mu $ on $ X $ a \textit{measure of full dimension} if $ \dimH R\mu = \dimH K $. Kenyon and Peres~\cite{KenyonPeres1996a} established a Ledrappier\nobreakdash-Young formula for the Hausdorff dimensions of ergodic measures and proved the existence of the measure of full dimension. For $ \mu \in \calM_{\sigma}(\calA^{\bbN}) $, following \cite{KenyonPeres1996a} we define
\begin{equation}\label{eq:DefWeightedEntropy}
	 \dLY{\mu} = \sum_{i=1}^{s} \left ( \frac{1}{\log n_{i}} - \frac{1}{\log n_{i-1}}\right ) h(\tau_{i}\mu).
\end{equation}

\begin{proposition}[{\cite[Theorem 1.1 \& Lemma 4.3]{KenyonPeres1996a}}]\label{thm:dimLY-ExistFullDim}
	Let $ X $ be any subshift and $ K = R(X)$. Then for each ergodic measure $ \mu \in \calM_{\sigma}(X) $,
	\begin{equation}\label{eq:DimLY}
		\dimH R\mu = \dLY{\mu},
	\end{equation}
	and
	\begin{equation}\label{eq:ExistFullDim}
		\dimH K  = \max_{\mu\in\calM_{\sigma}(X)} \dLY{\mu}.
	\end{equation} 
\end{proposition}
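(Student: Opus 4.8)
The plan is to establish the two assertions in turn, first the Ledrappier--Young identity \eqref{eq:DimLY} and then the variational formula \eqref{eq:ExistFullDim}. Fix an ergodic $\mu \in \calM_{\sigma}(X)$ and work along the reference scales $\rho = n_{s}^{-k}$. For such $\rho$ set $\ell_{i} = \ell_{i}(k) = \lfloor k \log n_{s}/\log n_{i}\rfloor$, so that a cylinder of length $\ell_{i}$ in the coordinates expanded by $n_{i}$ has side $\approx \rho$, and note $0 = \ell_{0} \le \ell_{1} \le \cdots \le \ell_{s} = k$. The symbolic approximate cube around $x$ is $Q(x,k) = \{ y \in X : \tau_{i}y|\ell_{i} = \tau_{i}x|\ell_{i}, \ 1 \le i \le s \}$, which fixes all $d$ coordinates for the first $\ell_{1}$ symbols, the coordinates surviving $\tau_{2}$ for the first $\ell_{2} \ge \ell_{1}$ symbols, and so on. The first step is the standard geometric reduction: since $R$ has bounded multiplicity on cubes of comparable scale and $R(Q(x,k))$ is comparable to a Euclidean cube of side $\rho$, the local dimension of $R\mu$ at $R(x)$ is computed by $\lim_{k} \log \mu(Q(x,k))/(-k\log n_{s})$, and it suffices to work along the geometric scales $\rho = n_{s}^{-k}$.

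The second step evaluates this limit. Telescoping over the factors, write $\mu(Q(x,k)) = \prod_{i=1}^{s} \mu(A_{i} \mid A_{i+1} \cap \cdots \cap A_{s})$, where $A_{i} = \{ y : \tau_{i}y|\ell_{i} = \tau_{i}x|\ell_{i}\}$. Conditioning on the slower blocks, the only new information carried by $A_{i}$ is the block-$i$ coordinate over its first $\ell_{i}$ symbols, whose conditional decay rate is $h(\tau_{i}\mu) - h(\tau_{i+1}\mu)$ (with $h(\tau_{s+1}\mu) := 0$). Summing the contributions $\ell_{i}\,(h(\tau_{i}\mu) - h(\tau_{i+1}\mu))$ and regrouping gives $-\log \mu(Q(x,k)) \approx \sum_{i=1}^{s}(\ell_{i} - \ell_{i-1})\,h(\tau_{i}\mu)$, and dividing by $k\log n_{s}$ produces exactly $\dLY{\mu}$ in the limit. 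The rigorous input making these asymptotics hold for $\mu$-a.e.\ $x$ is the Shannon--McMillan--Breiman theorem applied to each factor system $(X_{i}, \tau_{i}\mu)$ --- yielding $-\ell^{-1}\log(\tau_{i}\mu)(\tau_{i}x|\ell) \to h(\tau_{i}\mu)$ --- together with its conditional (relative) version to control the factors $\mu(A_{i} \mid A_{i+1} \cap \cdots \cap A_{s})$. Once the local dimension of $R\mu$ equals the constant $\dLY{\mu}$ almost everywhere, a Billingsley-type lemma gives $\dimH R\mu = \dLY{\mu}$, which is \eqref{eq:DimLY}.

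The third step is the variational formula. The functional $\mu \mapsto \dLY{\mu}$ is a combination of the entropies $h(\tau_{i}\mu)$ with strictly positive weights $\frac{1}{\log n_{i}} - \frac{1}{\log n_{i-1}}$; since each $\mu \mapsto h(\tau_{i}\mu)$ is affine and upper semicontinuous on the compact convex set $\calM_{\sigma}(X)$, so is $\dLY{\cdot}$, and hence it attains its maximum, which by the ergodic decomposition and affineness is realized at an ergodic measure. For the lower bound $\dimH K \ge \max_{\mu} \dLY{\mu}$ one takes a maximizing ergodic $\mu$: then $\supp(R\mu) \subset K$ forces $\dimH K \ge \dimH R\mu = \dLY{\mu}$ via \eqref{eq:DimLY}. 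The reverse inequality $\dimH K \le \max_{\mu}\dLY{\mu}$ is the substantive direction, proved by an efficient covering of $K$ by the cubes $Q(\cdot,k)$: grouping cubes by the empirical symbol frequencies in each factor block, the number of cubes of a given type is $\exp\big(\sum_{i}(\ell_{i} - \ell_{i-1})H_{i} + o(k)\big)$ with $H_{i}$ the Shannon entropy of the $i$-th empirical marginal, while each cube has diameter $\approx \rho = n_{s}^{-k}$. Hence for any $t$ exceeding the supremum over admissible frequency vectors of $\sum_{i}\big(\frac{1}{\log n_{i}} - \frac{1}{\log n_{i-1}}\big)H_{i}$ the associated $t$-Hausdorff sums tend to $0$; identifying this supremum with $\max_{\mu}\dLY{\mu}$ through the variational principle relating empirical and measure-theoretic entropy closes the argument.

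The main obstacle is twofold and resides in the two ``$\approx$'' steps. In \eqref{eq:DimLY}, the delicate point is the \emph{lower} bound on the local dimension: the layers attached to distinct factors are correlated, so one cannot merely multiply factor cylinder measures, and the conditional Shannon--McMillan--Breiman theorem (or an equivalent martingale estimate for the relative information functions) is required to control $\mu(A_{i} \mid A_{i+1} \cap \cdots \cap A_{s})$ uniformly along the scales $\rho = n_{s}^{-k}$. In \eqref{eq:ExistFullDim}, the analogous difficulty is the covering estimate: the combinatorial count of approximate cubes of a prescribed type must be matched precisely to the entropies $h(\tau_{i}\mu)$, and the optimization over frequency vectors must be shown to coincide with $\max_{\mu}\dLY{\mu}$ --- here the positivity of the weights $\frac{1}{\log n_{i}} - \frac{1}{\log n_{i-1}}$ and the affineness of $\dLY{\cdot}$ are what guarantee that the maximizer is an ergodic measure of full dimension.
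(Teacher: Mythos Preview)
The paper does not prove this proposition at all: it is simply quoted from \cite[Theorem 1.1 \& Lemma 4.3]{KenyonPeres1996a} and used as a black box, so there is no ``paper's own proof'' to compare against. Your outline is a reasonable sketch of the Kenyon--Peres argument --- approximate cubes, Shannon--McMillan--Breiman applied to each factor $(X_{i},\tau_{i}\mu)$ together with its relative version for the conditional layers, and then upper semicontinuity/affineness of $\dLY{\cdot}$ for the existence of a maximizer --- and it correctly identifies the two genuine difficulties (the conditional information control for the lower local-dimension bound, and the covering/counting estimate for the upper bound on $\dimH K$).

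One caution on the upper bound in \eqref{eq:ExistFullDim}: your type-counting scheme (``grouping cubes by the empirical symbol frequencies in each factor block'') works cleanly for full shifts, but for a general subshift $X$ the admissible words in $\calL_{\ell_{i}-\ell_{i-1}}(X_{i})$ are constrained, and the identification of the supremum over empirical frequency vectors with $\max_{\mu}\dLY{\mu}$ requires more than just the ordinary variational principle --- one needs that the supremum of $\sum_{i}(\theta_{i}-\theta_{i-1})h(\tau_{i}\mu)$ over $\calM_{\sigma}(X)$ is approached by measures whose factor marginals are simultaneously near-optimal, which is exactly what Kenyon and Peres establish. As written, your sketch asserts this identification without argument; filling that in is the real content of the upper bound.
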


By studying the optimization problem in \eqref{eq:ExistFullDim}, a part of the so-called \textit{weighted theormodynamic formalism}, Feng \cite{Feng2011} proved that when $ X $ satisfies weak specification, there is a unique measure of full dimension with some lower Gibbs property. Before stating this result in \autoref{prop:FullDimMeas}, we need some notation. For $ 1\leq i \leq s $, write
\begin{equation}\label{eq:def-alpha}
	\alpha_{i} = \frac{\log n_{i}}{\log n_{i-1}}.
\end{equation}
Let $X$ be a subshift. Define $ \phi^{(1)} \equiv 1 $ on $ \calL(X) $. For $ 2 \leq i \leq s $, recursively define $ \phi^{(i)} \colon \calL(X_{i}) \to (0,\infty)$ by
\begin{equation}\label{eq:def-recur-phi}
	\phi^{(i)}(J) = \sum_{I \in \calL(X_{i-1}) \colon \pi_{i}(I) = J } \phi^{(i-1)}(I)^{\alpha_{i-1}} \mFor J \in \calL(X_{i}).
\end{equation}
Then define $  Z \colon \bbN \to (0, \infty) $ as
\begin{equation*}
	Z(k) = \sum_{I \in \calL_{k}(X_{s})} \phi^{(s)}(I)^{\alpha_{s}},
\end{equation*}
and
\begin{equation}\label{eq:def-WTopPressure}
	P = \lim_{k\to\infty} \frac{\log Z(k)}{k},
\end{equation}
where the limit exists by a subadditivity argument.

\begin{proposition}[{\cite[Theorems 5.5 \& 7.3]{Feng2011}}] \label{prop:FullDimMeas}
	Let $ X $ be a subshift satisfying weak specification and $ K = R(X)$. Then there is a unique measure of full dimension $ \eta $ on $X$, that is,
	\begin{equation*}
		\dimH R\eta = \dLY{\eta} = \max_{\mu\in\calM_{\sigma}(X)} \dLY{\mu} = \dimH K.
	\end{equation*}
	Moreover, $ \eta $ is ergodic and has the following lower Gibbs property,
	\begin{equation}\label{eq:HalfGibbs}
		\eta(I) \gtrsim  \psi(I) \quad \text{ for } I \in \calL(X),
	\end{equation}
	where
	\begin{equation*}
		\psi(I) = \frac{1}{Z(\abs{I})} \prod_{i = 1}^{s} \phi^{(i)}(\tau_{i}(I))^{\alpha_{i}-1}.
	\end{equation*}
	Additionally, $ \tau_{s}\eta $ has the following Gibbs property,
	\begin{equation}\label{eq:GibbsProjMeas}
		\tau_{s}\eta(J) \approx \frac{\phi^{(s)}(J)^{\alpha_{s}}}{\exp(\abs{J}P)} \quad \text{ for } J \in \calL(X_{s}),
	\end{equation}
	where $P$ is as in \eqref{eq:def-WTopPressure}.
\end{proposition}

Combining \autoref{thm:dimLY-ExistFullDim} and  \cite[Proposition 3.7]{Feng2011} (a relative variational principle for subadditive potentials) gives a formula for the Hausdorff dimension of all compact $T$\nobreakdash-invariant sets, an analog of which for certain sofic affine\nobreakdash-invariant sets was previously established in \cite[Theorem 1.1]{KenyonPeres1996}.

\begin{lemma}\label{prop:HausDim}
	Let $ X $ be a subshift and $ K = R(X) $. Then
	\begin{equation}\label{eq:HausDim}
		\dimH K = \frac{P}{\log n_{s}}
	\end{equation}
	where $P$ is as in \eqref{eq:def-WTopPressure}.
\end{lemma}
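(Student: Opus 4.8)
The plan is to combine the Ledrappier–Young dimension formula from \autoref{thm:dimLY-ExistFullDim} with the variational characterization of the weighted topological pressure $P$. By \eqref{eq:ExistFullDim} we have $\dimH K = \max_{\mu \in \calM_\sigma(X)} \dLY{\mu}$, so it suffices to identify $\max_\mu \dLY{\mu}$ with $P/\log n_s$. First I would rewrite $\dLY{\mu}$ in a more convenient telescoped form: introducing $\alpha_i = \log n_i / \log n_{i-1}$ as in \eqref{eq:def-alpha} and noting that $\frac{1}{\log n_i} = \frac{1}{\log n_s}\prod_{j=i+1}^s \alpha_j$ (with the empty product equal to $1$), one can express the coefficients $\frac{1}{\log n_i} - \frac{1}{\log n_{i-1}}$ in \eqref{eq:DefWeightedEntropy} in terms of the $\alpha_j$'s so that
\begin{equation*}
	\log n_s \cdot \dLY{\mu} = \sum_{i=1}^s \Bigl(\prod_{j=i+1}^s \alpha_j\Bigr)(1 - \alpha_i)\, h(\tau_i \mu),
\end{equation*}
which is precisely the "weighted entropy" that the weighted thermodynamic formalism of \cite{Feng2011} is built to maximize. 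So the target becomes: $\max_\mu \log n_s \cdot \dLY{\mu} = P$.

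Next I would invoke the relative variational principle for subadditive potentials, \cite[Proposition 3.7]{Feng2011}, applied along the tower of factor maps $X = X_0 \to X_1 \to \cdots \to X_s$. The quantity $P$ defined through the recursively constructed potentials $\phi^{(i)}$ in \eqref{eq:def-recur-phi}–\eqref{eq:def-WTopPressure} is exactly the weighted topological pressure of the zero potential for this system, and the relative variational principle expresses it as a supremum over invariant measures of a sum of relative entropies weighted by the exponents $\alpha_i$. Unwinding the definition of $Z(k)$ and the $\phi^{(i)}$, the weighted pressure $P$ equals $\sup_\mu \sum_{i=1}^s \bigl(\prod_{j=i+1}^s \alpha_j\bigr)(1-\alpha_i) h(\tau_i \mu)$ — one checks that the recursive structure of $\phi^{(i)}$ encodes precisely the iterated "$\alpha_i$-th power and sum over fibers" operation, whose exponential growth rate, after taking logarithms and applying the variational principle at each level, produces the weighted combination of entropies of the factors $\tau_i \mu$. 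Matching this with the telescoped formula from the first step then gives $P = \log n_s \cdot \max_\mu \dLY{\mu} = \log n_s \cdot \dimH K$, which is \eqref{eq:HausDim}.

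The main obstacle is the bookkeeping in the second step: one must verify carefully that the recursively defined potentials $\phi^{(i)}$ and the normalization $Z(k)$ really do correspond, under \cite[Proposition 3.7]{Feng2011}, to the weighted sum of entropies with the exponents $\prod_{j>i}\alpha_j \cdot (1-\alpha_i)$, rather than some other weighting; this requires being precise about how the relative variational principle composes along a chain of factor maps and how the exponents $\alpha_i$ in \eqref{eq:def-recur-phi} interact with the iterated logarithms. A secondary point is to confirm that the suprema are attained (so that "$\max$" is justified), but this is already guaranteed by \autoref{prop:FullDimMeas}, which supplies the measure of full dimension $\eta$ realizing the maximum; alternatively upper semicontinuity of entropy on the (compact) space of invariant measures suffices. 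Once the identification of $P$ with the weighted pressure is pinned down, the equality \eqref{eq:HausDim} follows immediately by combining it with \eqref{eq:ExistFullDim}.
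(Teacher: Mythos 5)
Your proposal is correct and follows essentially the same route as the paper, which likewise obtains \eqref{eq:HausDim} by combining \eqref{eq:ExistFullDim} from \autoref{thm:dimLY-ExistFullDim} with the relative variational principle \cite[Proposition 3.7]{Feng2011} applied along the chain of factor maps to identify $P/\log n_{s}$ with $\max_{\mu}\dLY{\mu}$. Your telescoping of the coefficients via $\theta_{i}-\theta_{i-1}=\bigl(\prod_{j=i+1}^{s}\alpha_{j}\bigr)(1-\alpha_{i})$ is exactly the bookkeeping implicit in that citation.
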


\subsection{Approximate cube}
In this subsection, we introduce the approximate cubes with side lengths $ \approx n_{s}^{-k} $ ($k\in\bbN$) by `slowing down' the iterations along the stronger unstable directions.
For $ 1 \leq i \leq s $, write
\begin{equation}\label{eq:def-theta}
	\theta_{i} = \frac{\log n_{s}}{\log n_{i}}.
\end{equation}
For $ k \in \bbN $, define $R_{k} \colon \Union_{n \geq k}\calA^{n} \to \bbT^{d} $ by
\begin{equation}\label{eq:def-Rk}
	R_{k}(I) := \sum_{\ell=1}^{k} \Lambda^{-\ell} i_{\ell}  \quad \text{ for } I = i_{1} \cdots i_{k}\cdots i_{n}.
\end{equation}
Combining \eqref{def:R-repr} and \eqref{eq:def-Rk} gives
\begin{equation}\label{eq:R=Rk+kR}
	R(x) = R_{k}(x|k) + \Lambda^{-k} R(\sigma^{k}x) \mFor x \in \calA^{\bbN}.
\end{equation}
Let $ \lfloor x \rfloor $ denote the integral part of $ x \in \bbR $. For $ I \in \calA^{k} $, the $ k $-level \textit{approximate cube} is
\begin{equation}\label{eq:AppCube}
	Q_{k}(I) : = \prod_{i = 1}^{s} \prod_{j = d_{i-1} + 1}^{d_{i}} \bigg[R_{\tk[i]}(I)_{j}, R_{\tk[i]}(I)_{j} + n_{i}^{-\tk[i]} \bigg),
\end{equation}
where $ v_{j} $ denotes the $ j $-th coordinate of a vector $ v \in \euclid[d] $. For $ x \in \calA^{\bbN} $, we define $ Q_{k}(x) = Q_{k}(x|k) $. Combining \eqref{eq:def-ni}, \eqref{eq:def-theta} and \eqref{eq:AppCube} shows that  $ \abs{ Q_{k}(x) } \approx n_{s}^{-k} $.

We begin with a lemma relating the measure of approximate cubes to that of a collection of cylinders in the symbolic space. For $ x \in X $ and $ k \in \bbN $, define
\begin{equation}\label{eq:WordsInApproxCube}
	\Gamma_{k}(x) = \bigg\{ I \in \calL_{k}(X) \colon \tau_{i}(I|\tk[i]) = \tau_{i}(x|\tk[i]) \text{ for } 1 \leq i \leq s \bigg \}.
\end{equation}

\begin{lemma}\label{lem:MassAppCube}
Let $X$ be a subshift and $ \mu $ be a Borel measure on $ X $. Then for $ x \in X $,
	\begin{equation}\label{eq:MassEst}
		R\mu(Q_{k}(x))  = \sum_{I \in \Gamma_{k}(x)}  \mu(I).
	\end{equation}
\end{lemma}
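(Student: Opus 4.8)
The plan is to unwind both sides of \eqref{eq:MassEst} through the definitions and match them term by term. The left side is $R\mu(Q_k(x)) = \mu(R^{-1}(Q_k(x)))$, so the task reduces to showing that $R^{-1}(Q_k(x))$, intersected with $X$ and decomposed along the first $k$ symbols, is exactly the disjoint union $\bigsqcup_{I \in \Gamma_k(x)} [I] \cap X$ — or at least agrees with it up to a $\mu$-null set. To see this, fix $y \in X$ and ask when $R(y) \in Q_k(x)$. Using \eqref{eq:R=Rk+kR} in the form $R(y) = R_k(y|k) + \Lambda^{-k} R(\sigma^k y)$ together with the more refined splitting coming from the definition \eqref{eq:def-Rk} of $R_{\tk[i]}$, one sees that in the $j$-th coordinate with $d_{i-1} < j \le d_i$, the contribution to $R(y)_j$ is $R_{\tk[i]}(y|k)_j$ plus a tail term of size at most $n_i^{-\tk[i]}$ (since coordinates $j$ in block $i$ have expansion ratio $m_j = n_i$, and the digits in positions $> \tk[i]$ contribute a geometric tail bounded by $n_i^{-\tk[i]}$). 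Hence $R(y)_j \in [R_{\tk[i]}(y|k)_j,\, R_{\tk[i]}(y|k)_j + n_i^{-\tk[i]})$ automatically, and $R(y) \in Q_k(x)$ if and only if $R_{\tk[i]}(y|k)_j = R_{\tk[i]}(x|k)_j$ for all such $j$ and all $i$.

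The next step is to check that the condition ``$R_{\tk[i]}(y|k)_j = R_{\tk[i]}(x|k)_j$ for all $d_{i-1} < j \le d_i$ and all $1 \le i \le s$'' is precisely the condition ``$\tau_i(y|\tk[i]) = \tau_i(x|\tk[i])$ for $1 \le i \le s$'' defining membership of $y|k$ in $\Gamma_k(x)$. This is where one must be a little careful about the base-$\Lambda$ representation: $R_{\tk[i]}(I)_j = \sum_{\ell=1}^{\tk[i]} m_j^{-\ell} (i_\ell)_j$ is a finite base-$m_j$ expansion with digits $(i_\ell)_j \in \{0,\dots,m_j-1\}$, so it determines and is determined by the digit string $((i_1)_j, \dots, (i_{\tk[i]})_j)$. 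Running over $j$ with $d_{i-1} < j \le d_i$ recovers exactly the coordinates of $\calA_i$ that are ``new'' at level $i$ (those not already controlled at levels $< i$ via the longer prefixes $\tk[i']$ with $i' < i$, since $\tk[i']$ decreases — wait, $\theta_i = \log n_s / \log n_i$ increases in $i$ since $n_i$ decreases, so $\tk[i]$ increases in $i$); here one uses the nested structure $\tau_i = \pi_i \circ \cdots \circ \pi_1$ and the fact that the coordinates $\calA_i \setminus \pi_{i+1}^{-1}$-image are being pinned down at the shorter prefix length. Assembling: requiring $\tau_i(y|\tk[i]) = \tau_i(x|\tk[i])$ for all $i$ simultaneously is equivalent to the full coordinatewise digit-matching condition, which is equivalent to $R(y) \in Q_k(x)$.

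Granting the set identity $R^{-1}(Q_k(x)) \cap X = \bigcup_{I \in \Gamma_k(x)} ([I] \cap X)$, and noting this is a finite union of disjoint cylinders, countable (here finite) additivity of $\mu$ gives $R\mu(Q_k(x)) = \mu(R^{-1}(Q_k(x))) = \sum_{I \in \Gamma_k(x)} \mu([I]) = \sum_{I \in \Gamma_k(x)} \mu(I)$, which is \eqref{eq:MassEst}. The one genuine subtlety — the main obstacle — is boundary effects: the approximate cube $Q_k(I)$ is half-open, and whether a point $y$ whose tail $R(\sigma^k y)$ hits the right endpoint in some coordinate gets counted could in principle cause an off-by-one in digits. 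Because the intervals are half-open on the right, the tail contribution $\Lambda^{-\tk[i]}(\text{something in } [0, n_i^{-\tk[i]}))$ — more precisely one must verify the geometric tail $\sum_{\ell > \tk[i]} m_j^{-\ell}(y_\ell)_j < n_i^{-\tk[i]}$ strictly, with equality impossible unless all those digits are maximal, an event that still lies in $[R_{\tk[i]}(y|k)_j, R_{\tk[i]}(y|k)_j + n_i^{-\tk[i]})$ — stays strictly inside, so the half-open convention is consistent and no correction is needed. I would state this tail estimate as the key computational lemma and then the rest is bookkeeping over the index $i$ and the coordinate blocks $d_{i-1} < j \le d_i$.
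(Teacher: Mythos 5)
Your overall route is the same as the paper's: identify $R^{-1}(Q_{k}(x))\cap X$ with the union of the cylinders $[I]$, $I\in\Gamma_{k}(x)$, and then sum $\mu$ over these disjoint cylinders; the paper's proof is exactly this decomposition (it states the containment of $R^{-1}(Q_{k}(x)\cap K)$ in $\bigcup_{I\in\Gamma_{k}(x)}[I]$ and then sums), so no new idea or different mechanism is involved in your proposal.

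However, the one step where you go beyond the paper --- the boundary analysis you single out as ``the key computational lemma'' --- is wrong as written. If all the digits $(y_{\ell})_{j}$ with $\ell>\tk[i]$ are maximal, the tail equals exactly $n_{i}^{-\tk[i]}$, so $R(y)_{j}=R_{\tk[i]}(y|k)_{j}+n_{i}^{-\tk[i]}$, which is the \emph{right endpoint} and hence not in the half-open interval $[R_{\tk[i]}(y|k)_{j},\,R_{\tk[i]}(y|k)_{j}+n_{i}^{-\tk[i]})$; your claim that this event ``still lies in'' the interval, so that ``no correction is needed'', is false. Concretely, for $d=1$, $m_{1}=2$ and $y=0111\cdots$ one has $R(y)=1/2\notin Q_{1}(y)=[0,1/2)$. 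For the same reason your asserted equivalence ``$R(y)\in Q_{k}(x)$ if and only if the digit strings match'' fails in both directions on the set of sequences with an eventually maximal tail in some coordinate: a $y\in[I]$ with $I\in\Gamma_{k}(x)$ can be sent onto the right face of $Q_{k}(x)$, and conversely a $y$ with non-matching digits and all-maximal tail can land on the left face of $Q_{k}(x)$ (the usual non-uniqueness of base-$m_{j}$ expansions). So the exact set identity you rely on holds only off this exceptional set; the honest version of your argument is to prove the two inclusions up to that set (which carries no mass for the Gibbs and Bernoulli measures to which the lemma is later applied), or to argue via containments as the paper does, rather than to claim the half-open convention removes the issue outright.
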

\begin{proof}
	Let $ K = R(X)$. It follows from \eqref{eq:AppCube} and \eqref{eq:WordsInApproxCube} that
	\begin{equation*}
		R^{-1}(Q_{k}(x) \intxn K) \subset \Union_{I \in \Gamma_{k}(x)} [I]
	\end{equation*}
	and
	\begin{equation*}
		R^{-1}(Q_{k}(x) \intxn K ) \Intxn \, [I]  \neq \emptyset \quad \text{ for } I \in \Gamma_{k}(x).
	\end{equation*}
	Since $ R\mu $ supports on $ K $,
	\begin{equation*}
		R\mu(Q_{k}(x)) = R\mu(Q_{k}(x) \intxn K) = \sum_{I\in \Gamma_{k}(x)} \mu(I),
	\end{equation*}
	which finishes the proof.
\end{proof}

Next we give a lemma for counting the number of all $ k $-level approximate cubes. Denote
\begin{equation*}
	\calQ_{k}(X) = \{ Q_{k}(x) \colon x \in X \}.
\end{equation*}
Then $ \calQ_{k}(X) $ covers $ K $ by the $ k $-level approximate cubes. To estimate $ \#\calQ_{k}(X) $, following \cite{McMullen1984} we introduce a collection of vectors 
\begin{equation}\label{eq:Words4NumCubes}
	\begin{aligned}
		\calD_{k}(X) :=  \bigg \{ (J_{1}, \ldots, J_{s}) \in \prod_{i=1}^{s} & \calL_{\tk[i]-\tk[i-1]} (X_{i})  \colon \text{there exists } I \in  \calL_{k}(X) \bigg. \\ & \text{such that }\tau_{i}(I|_{\tk[i-1]}^{\tk[i]}) = J_{i} \text{ for } 1 \leq i \leq s \bigg\}.
	\end{aligned}
\end{equation}

\begin{lemma}\label{lem:NumCubes}
	Let  $ X $ be a subshift. Then $ \# \calQ_{k}(X) = \# \calD_{k}(X) $ for $ k \in \bbN $.
\end{lemma}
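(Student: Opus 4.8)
The plan is to exhibit a bijection between $\calQ_k(X)$ and $\calD_k(X)$. The natural map sends a point $x \in X$ to the tuple $\bigl(\tau_1(x|_{0}^{\lfloor\theta_1 k\rfloor}), \tau_2(x|_{\lfloor\theta_1 k\rfloor}^{\lfloor\theta_2 k\rfloor}), \ldots, \tau_s(x|_{\lfloor\theta_{s-1}k\rfloor}^{\lfloor\theta_s k\rfloor})\bigr)$, noting $\theta_s = 1$ so the last block ends at position $k$. I would first check this is well defined as a map $\calQ_k(X) \to \calD_k(X)$: if $Q_k(x) = Q_k(y)$ then by the definition \eqref{eq:AppCube} of the approximate cube, the anchor points $R_{\lfloor\theta_i k\rfloor}(x)_j$ and $R_{\lfloor\theta_i k\rfloor}(y)_j$ agree for all $j$ in block $i$ (the side lengths $n_i^{-\lfloor\theta_i k\rfloor}$ already coincide), and by the injectivity of the finite digit expansion $R_\ell$ restricted to the relevant coordinates this forces $\tau_i(x|\lfloor\theta_i k\rfloor) = \tau_i(y|\lfloor\theta_i k\rfloor)$ for each $i$; peeling off these nested prefixes gives equality of all the blocks $\tau_i(x|_{\lfloor\theta_{i-1}k\rfloor}^{\lfloor\theta_i k\rfloor})$. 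So the tuple depends only on the cube, and it lies in $\calD_k(X)$ by taking $I = x|k \in \calL_k(X)$ as the witness in \eqref{eq:Words4NumCubes}.

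Next I would show this map is injective. If $x, y \in X$ produce the same tuple, then the nested prefixes match: $\tau_i(x|\lfloor\theta_i k\rfloor) = \tau_i(y|\lfloor\theta_i k\rfloor)$ for $1 \le i \le s$ (reassembling the blocks, using that $\tau_{i+1} = \pi_{i+1}\circ\tau_i$ so the shorter projections are determined by the longer ones together with the block data). By \eqref{eq:AppCube} the approximate cube $Q_k(x)$ depends only on these quantities $\tau_i(x|\lfloor\theta_i k\rfloor)$ — indeed $R_{\lfloor\theta_i k\rfloor}(x)_j$ for $d_{i-1} < j \le d_i$ is a function of $\tau_i(x|\lfloor\theta_i k\rfloor)$ only, since $\tau_i$ records exactly the coordinates $> d_{i-1}$. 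Hence $Q_k(x) = Q_k(y)$.

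Finally, surjectivity: given $(J_1,\ldots,J_s) \in \calD_k(X)$, by definition there is $I \in \calL_k(X)$ with $\tau_i(I|_{\lfloor\theta_{i-1}k\rfloor}^{\lfloor\theta_i k\rfloor}) = J_i$; extend $I$ to some $x \in X$ (possible since $I \in \calL_k(X)$ means $[I] \cap X \ne \emptyset$), and then $Q_k(x) \in \calQ_k(X)$ maps to $(J_1,\ldots,J_s)$. This establishes the bijection and hence $\#\calQ_k(X) = \#\calD_k(X)$. I do not anticipate a serious obstacle here; the only point requiring a little care is the bookkeeping that the anchor coordinates in block $i$ of the approximate cube are genuinely functions of $\tau_i(x|\lfloor\theta_i k\rfloor)$ alone — this is where one uses that $\tau_i$ forgets precisely the first $d_{i-1}$ coordinates, so the digits $x_\ell$ for $\ell \le \lfloor\theta_i k\rfloor$ contribute to $R_{\lfloor\theta_i k\rfloor}(x)_j$ (for $j > d_{i-1}$) exactly through their images under $\tau_i$.
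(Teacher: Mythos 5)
Your proposal is correct and is essentially the paper's own argument: the same map sending a cube $Q_k(x)$ to the block tuple $\bigl(\tau_i(x|_{\lfloor\theta_{i-1}k\rfloor}^{\lfloor\theta_i k\rfloor})\bigr)_{i=1}^s$, shown to be a well-defined bijection onto $\calD_k(X)$, with surjectivity obtained by lifting a witness $I\in\calL_k(X)$ to a point of $[I]\cap X$. The only difference is that you spell out the equivalence ``$Q_k(x)=Q_k(y)\iff\tau_i(x|\lfloor\theta_i k\rfloor)=\tau_i(y|\lfloor\theta_i k\rfloor)$ for all $i$'' via injectivity of the finite digit expansions, which the paper states without elaboration.
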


\begin{proof}
	It suffices to construct a bijective map $ f \colon \calQ_{k}(X) \to \calD_{k}(X) $. For $ Q \in \calQ_{k}(X) $, there exists $ x \in X $ such that $ Q = Q_{k}(x) $. Define
	\begin{equation*}
		f(Q) := (\tau_{1}(x|_{\tk[-1]}^{\tk[1]}), \ldots, \tau_{s}(x|_{\tk[s-1]}^{\tk[s]})).
	\end{equation*}
	Then $ f $ is injective since for $ x, y \in X $, $ Q_{k}(x) = Q_{k}(y) $ if and only if $ \tau_{i}(x|_{\tk[i-1]}^{\tk[i]}) = \tau_{i}(y|_{\tk[i-1]}^{\tk[i]}) $ for $ 1 \leq i \leq s $. Next we show that $ f $ is surjective. For $ \mathbf{J} = (J_{1}, \ldots, J_{s}) \in \calD_{k}(X) $, by \eqref{eq:Words4NumCubes} there is $ I \in \calL_{k}(X) $ such that $ \tau_{i}(I|_{\tk[i-1]}^{\tk[i]}) = J_{i} $ for $ 1 \leq i \leq s $. Take some $ x \in [I] \intxn X $, then $ f(Q(x)) = \mathbf{J} $.
\end{proof}

An upper bound on the upper box dimension of $ R(X) $ follows immediately.
\begin{lemma}\label{prop:Box-UB}
	Let  $ X $ be a subshift and $ K = R(X) $. Then
	\begin{equation*}
		\uDim{B} K \leq \sum_{i=1}^{s}\left (\frac{1}{\log n_{i}}-\frac{1}{\log n_{i-1}}\right) h(X_{i}).
	\end{equation*}
\end{lemma}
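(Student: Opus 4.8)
The plan is to estimate the upper box dimension directly from the cover of $K$ by approximate cubes, using \autoref{lem:NumCubes} to control their number. First I would note that, by the defining condition in \eqref{eq:Words4NumCubes}, $\calD_k(X)\subseteq\prod_{i=1}^{s}\calL_{\tk[i]-\tk[i-1]}(X_i)$, so \autoref{lem:NumCubes} gives
\[
  \#\calQ_k(X)=\#\calD_k(X)\leq\prod_{i=1}^{s}\#\calL_{\tk[i]-\tk[i-1]}(X_i).
\]
Now fix $\epsilon>0$. Since $\theta_0=0<\theta_1<\cdots<\theta_s=1$ (because $n_1>\cdots>n_s$), each difference $\tk[i]-\tk[i-1]=\lfloor\theta_i k\rfloor-\lfloor\theta_{i-1}k\rfloor$ tends to infinity with $k$, so by \eqref{eq:Def-TopEntropy} and subadditivity there is $k_0$ such that for all $k\geq k_0$ and all $1\leq i\leq s$ one has $\#\calL_{\tk[i]-\tk[i-1]}(X_i)\leq\exp\bigl((\tk[i]-\tk[i-1])(h(X_i)+\epsilon)\bigr)$. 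Using $\tk[i]-\tk[i-1]\leq(\theta_i-\theta_{i-1})k+1$ together with the identity $\theta_i-\theta_{i-1}=\log n_s\bigl(\tfrac1{\log n_i}-\tfrac1{\log n_{i-1}}\bigr)$ (valid also for $i=1$ with the convention $\tfrac1{\log n_0}=0$), this produces a constant $C_\epsilon>0$ with
\[
  \#\calQ_k(X)\leq C_\epsilon\exp\!\Bigl(k\log n_s\sum_{i=1}^{s}\Bigl(\tfrac1{\log n_i}-\tfrac1{\log n_{i-1}}\Bigr)(h(X_i)+\epsilon)\Bigr)\mForall k\geq k_0.
\]

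Second, I would convert this into the box-dimension bound. Each $Q\in\calQ_k(X)$ satisfies $\abs{Q}\lesssim n_s^{-k}$, so $K$ is covered by $\#\calQ_k(X)$ sets of diameter at most $c\,n_s^{-k}$ for a fixed constant $c$; writing $N(\delta)$ for the least number of sets of diameter $\leq\delta$ needed to cover $K$, this gives $N(c\,n_s^{-k})\leq\#\calQ_k(X)$. Since the sequence $\delta_k=c\,n_s^{-k}$ decreases to $0$ with bounded ratios $\delta_k/\delta_{k+1}=n_s$, the upper box dimension may be computed along it, so
\[
  \uDim{B}K=\limsup_{k\to\infty}\frac{\log N(\delta_k)}{-\log\delta_k}\leq\limsup_{k\to\infty}\frac{\log\#\calQ_k(X)}{k\log n_s-\log c}\leq\sum_{i=1}^{s}\Bigl(\tfrac1{\log n_i}-\tfrac1{\log n_{i-1}}\Bigr)(h(X_i)+\epsilon).
\]
Letting $\epsilon\to0$ yields the claimed inequality.

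The proof has no real obstacle: the content is entirely in the already established \autoref{lem:NumCubes} and in the bookkeeping with the floor functions $\tk[i]$ and the convention $n_0=\infty$ (so that $\tk[0]=0$ and $\tfrac1{\log n_0}=0$). The only point requiring a little care is that \eqref{eq:Def-TopEntropy} provides the one-sided estimate $\#\calL_m(X_i)\leq e^{m(h(X_i)+\epsilon)}$ only for $m$ large, not for all $m$, which is why $\epsilon$ must be introduced and then removed at the end, and why one needs each $\tk[i]-\tk[i-1]\to\infty$.
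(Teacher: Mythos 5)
Your proposal is correct and follows essentially the same route as the paper: bound $\#\calQ_k(X)=\#\calD_k(X)$ by $\prod_{i=1}^{s}\#\calL_{\tk[i]-\tk[i-1]}(X_i)$ via \autoref{lem:NumCubes}, use the entropy limit \eqref{eq:Def-TopEntropy} to evaluate the exponential growth rate, and compare against the scales $n_s^{-k}$ of the approximate cubes. Your explicit $\epsilon$-bookkeeping is just a more detailed version of the paper's limsup computation, so there is nothing further to add.
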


\begin{proof}
	By definition of $ \calD_{k}(X) $,
	\begin{equation}\label{eq:UB-NumVects}
		\log \# \calD_{k}(X) \leq \log \# \left (\prod_{i=1}^{s} \calL_{\tk[i]-\tk[i-1]} (X_{i})  \right ) \leq \sum_{i=1}^{s} \log \#  \calL_{\tk[i]-\tk[i-1]} (X_{i}).
	\end{equation}
	For $ 1\leq i \leq s $, since $ h(X_{i}) = \lim_{k\to\infty} (1/k) \log \# \calL_{k}(X_{i}) $
	and $ \theta_{i} - \theta_{i-1} > 0 $,
	\begin{equation}\label{eq:UB-by-TopEntropy}
		\begin{aligned}
		\lim_{k\to\infty} \frac{\log \#  \calL_{\tk[i]-\tk[i-1]} (X_{i})}{k} & = \lim_{k\to\infty} \frac{\tk[i] - \tk[i-1]}{k} \cdot \frac{\log \#  \calL_{\tk[i]-\tk[i-1]} (X_{i})}{ \tk[i] - \tk[i-1] } \\
		                                                                     & = (\theta_{i}-\theta_{i-1}) h(X_{i}).
	\end{aligned}
	\end{equation}
	Then
	\begin{align*}
		\uDim{B} K & \leq \limsup_{k\to\infty} \frac{\log \# \calQ_{k}(X) }{ \log n_{s}^{k} }  \\ & = \limsup_{k\to\infty} \frac{\log \# \calD_{k}(X) }{ \log n_{s}^{k} }  & \text{by \autoref{lem:NumCubes}}\\
		           & \leq \frac{1}{\log n_{s}} \sum_{i=1}^{s} \limsup_{k\to\infty} \frac{\log \#  \calL_{\tk[i]-\tk[i-1]} (X_{i})}{k} & \text{by } \eqref{eq:UB-NumVects}\\
		           & = \frac{1}{\log n_{s}} \left (\sum_{i=1}^{s}(\theta_{i}-\theta_{i-1}) h( X_{i} )\right ) & \text{by } \eqref{eq:UB-by-TopEntropy},
	\end{align*}
	which finishes the proof.
\end{proof}

We end this section with following variant of the Rogers\nobreakdash-Taylor theorem~\cite{RogersTaylor1961} which is a useful tool to estimate the Hausdorff measure.

\begin{lemma} \label{lem:RogersTaylor}
	Let $ E \subset \torus[d] $ be a Borel set and $ \nu $ be a Borel measure with $ 0 < \nu(E) < \infty $. Then for a gauge function $ \varphi $,
	\begin{enumerate}[(i)]
		\item if $ \limsup_{k\to\infty}  \nu(Q_{k}(x))/\varphi(n_{s}^{-k}) \leq C $ for $ \nu $-a.e.\ $ x \in E $, then $ \haus{\varphi}{E} \gtrsim \nu(E)/C $.
		\vskip 3pt
		\item if $ \limsup_{k\to\infty} \nu(Q_{k}(x))/\varphi(n_{s}^{-k}) \geq C $ for all $ x \in E $, then $ \haus{\varphi}{E} \lesssim \nu(E) / C $.
	\end{enumerate}
\end{lemma}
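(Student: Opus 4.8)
The plan is to run the two standard halves of the Rogers--Taylor argument, with Euclidean balls replaced by the grid of approximate cubes: a mass distribution principle for~(i) and a Vitali-type selection for~(ii). First I would record the elementary geometry. For $x\in\torus[d]$ write $Q_k(x)$ for the level-$k$ grid approximate cube containing $x$; this agrees with \eqref{eq:AppCube} when $x=R(\tilde x)$ for $\tilde x$ in the subshift, and its $j$-th side, for $d_{i-1}<j\le d_i$, is $n_i^{-\tk[i]}\in[n_s^{-k},\,n_1 n_s^{-k})$ since $\theta_i k-1<\tk[i]\le\theta_i k$ and $n_i^{-\theta_i k}=n_s^{-k}$; hence $n_s^{-k}\le\abs{Q_k(x)}\le\sqrt d\, n_1\, n_s^{-k}$. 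The level-$k$ grid cubes partition $[0,1)^d$ (so two of a common level are equal or disjoint, as in the proof of \autoref{lem:NumCubes}), and since the coordinate grids only refine as $k$ increases, any two grid approximate cubes whatsoever are nested or disjoint. From the side bounds I get two constants $N,N'$ depending only on $d,n_1,n_s$: a set of diameter $\le n_s\cdot n_s^{-k}$ meets at most $N$ level-$k$ grid cubes, and any single level-$k$ grid cube is covered by at most $N'$ sets of diameter $\le n_s^{-k}$.

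For~(i), set $E_m=\{x\in E:\nu(Q_k(x))\le 2C\varphi(n_s^{-k})\text{ for all }k\ge m\}$; these are Borel and increasing, and the hypothesis gives $\nu(E\setminus\bigcup_m E_m)=0$, so I may fix $m$ with $\nu(E_m)>\tfrac12\nu(E)$. Let $\{E_i\}$ cover $E_m$ with $\abs{E_i}\le\delta$, where $\delta$ is small enough that $n_s^{-k}\le\delta$ forces $k\ge m$; discard the $E_i$ with $\abs{E_i}=0$, and for each of the rest take the integer $k_i$ with $n_s^{-k_i}\le\abs{E_i}<n_s^{1-k_i}$, so $k_i\ge m$ and $E_i$ meets at most $N$ level-$k_i$ grid cubes. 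Any such cube that also meets $E_m$ contains a point $z\in E_m$, whence $\nu(Q_{k_i}(z))\le 2C\varphi(n_s^{-k_i})\le 2C\varphi(\abs{E_i})$ ($\varphi$ increasing); summing over the cubes meeting $E_i$ gives $\nu(E_i\cap E_m)\le 2NC\,\varphi(\abs{E_i})$, and therefore $\tfrac12\nu(E)<\nu(E_m)\le\sum_i\nu(E_i\cap E_m)\le 2NC\sum_i\varphi(\abs{E_i})$. Taking the infimum over covers yields $\haus{\varphi}{E}\ge\haus{\varphi}{E_m}\ge\nu(E)/(4NC)\gtrsim\nu(E)/C$.

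For~(ii), fix $\delta>0$ and, by the outer regularity of $\nu$, an open set $U\supseteq E$ with $\nu(U)<\nu(E)+\delta$. Let $\mathcal F$ be the family of grid approximate cubes $Q\subseteq U$ with $\abs{Q}<\delta$ and $\nu(Q)>\tfrac C2\varphi(n_s^{-k(Q)})$, where $k(Q)$ is the level of $Q$. Since $\limsup_k\nu(Q_k(x))/\varphi(n_s^{-k})\ge C>\tfrac C2$ and the cubes $Q_k(x)$ shrink to $\{x\}\subseteq U$, every $x\in E$ lies in some member of $\mathcal F$; as the levels occurring in $\mathcal F$ are bounded below and cubes of a common level are disjoint, $x$ in fact lies in a member of $\mathcal F$ of minimal level, which is then maximal in $\mathcal F$. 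Hence the maximal members $\{Q_j\}$ of $\mathcal F$ are pairwise disjoint (nested-or-disjoint plus maximality), still cover $E$, and are countable (disjoint sets with non-empty interior in a separable space). Dissecting each $Q_j$ into at most $N'$ sets of diameter $\le n_s^{-k(Q_j)}<\delta$ gives a cover of $E$ with
\[
	\sum_j N'\,\varphi\bigl(n_s^{-k(Q_j)}\bigr)<\frac{2N'}{C}\sum_j\nu(Q_j)=\frac{2N'}{C}\,\nu\Bigl(\bigcup\nolimits_j Q_j\Bigr)\le\frac{2N'}{C}\,\nu(U)<\frac{2N'}{C}\bigl(\nu(E)+\delta\bigr),
\]
so $\haus{\varphi}{E}\le(2N'/C)(\nu(E)+\delta)$, and letting $\delta\to 0$ gives $\haus{\varphi}{E}\le(2N'/C)\nu(E)\lesssim\nu(E)/C$.

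The genuinely routine part is pinning down the explicit $N$ and $N'$. The points needing care are the measure-theoretic bookkeeping in~(i) — Borel measurability of the $E_m$ and the passage from the almost-everywhere $\limsup$ bound to a uniform bound on a set of nearly full measure — and, in~(ii), both the extraction of a pairwise disjoint subcover and the control of $\nu(\bigcup_j Q_j)$ by $\nu(E)$; the latter is exactly what outer regularity supplies, while the former, which for balls would require a Vitali $5r$-covering lemma, is trivialized by the nested-or-disjoint structure of the grid of approximate cubes. The one substantive subtlety is that $\abs{Q_k(x)}$ can be as large as a fixed multiple of $n_s^{-k}$ rather than equal to it; rather than imposing a doubling condition on $\varphi$, I keep every estimate in terms of $\varphi(n_s^{-k})$ by cutting each selected cube into $N'$ pieces of diameter at most $n_s^{-k}$.
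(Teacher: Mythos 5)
Your proof is correct and follows exactly the standard Rogers--Taylor density argument transported to the net of approximate cubes (mass distribution principle for (i), selection of maximal cubes via the nested-or-disjoint grid structure in place of a Vitali lemma for (ii)); the paper gives no proof of this lemma at all, merely citing Rogers--Taylor, and your argument is precisely what that citation is meant to supply. The only implicit point worth flagging is that the outer-regularity step in (ii) uses that $\nu$ is a finite (hence Radon, outer regular) Borel measure on $\torus[d]$, which holds in every application in the paper, and that the discarded zero-diameter cover sets in (i) carry no mass because points of $E_m$ cannot be atoms of $\nu$ -- both routine remarks rather than gaps.
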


%
%

\section{Some characterizations for the coincidence of dimensions} \label{sec:Ext-KP}

In this section we prove the following theorem which provides some dynamical characterizations of the coincidence of the Hausdorff and box dimensions for $ K = R(X) $ when $ X $ is a subshift satisfying weak specification.

\begin{theorem}\label{thm:Ext-KP-DimCoin}
	Let $ X $ be a subshift satisfying weak specification and $ K = R(X) $. For $1 \leq i \leq s$, let $ \mu_{i}$ be the measure of maximal entropy on $ X_{i} $ (see \autoref{coro:parry}). Then the following statements are equivalent.
	\begin{enumerate}[(a)]
		\item\label{itm:DimEq} $ \dimH K = \dimB K $. %

		\item\label{itm:ParryChain} $ \tau_{i} \mu_{1} = \mu_{i} $ for $ 1 \leq i \leq s $.
		
		\item \label{itm:UniFiber-Entropy} For $ 1 \leq i \leq s $ and $ I \in \calL(X_{i}) $, $ \# \tau_{i}^{-1}(I) \approx \exp \left (\abs{I} (h(X_{1})-h(X_{i}))\right) $, where $ h(X_{i}) $ is as in \eqref{eq:Def-TopEntropy}.

		\item \label{itm:UniFiber-lambda} There exist $ 0 \leq \lambda_{1} \leq \cdots \leq \lambda_{s} $ such that for $1 \leq i \leq s $ and $ I \in \calL(X_{i})$, $ \# \tau_{i}^{-1}(I) \approx \exp (\lambda_{i}\abs{I}) $.
	\end{enumerate}
\end{theorem}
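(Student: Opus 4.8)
I would prove the three equivalences \ref{itm:DimEq}$\Leftrightarrow$\ref{itm:ParryChain}, \ref{itm:ParryChain}$\Leftrightarrow$\ref{itm:UniFiber-Entropy}, and \ref{itm:UniFiber-Entropy}$\Leftrightarrow$\ref{itm:UniFiber-lambda} in turn. Throughout, set $c_{i}=\tfrac{1}{\log n_{i}}-\tfrac{1}{\log n_{i-1}}>0$ and $B=\sum_{i=1}^{s}c_{i}h(X_{i})$, so that \autoref{prop:Box-UB} reads $\uDim{B}K\le B$. The backbone of the argument is the identity $\uDim{B}K=B$: granting it, $\dimH K\le\lDim{B}K\le\uDim{B}K=B$ shows that \ref{itm:DimEq} is simply the assertion $\dimH K=B$, and everything else reduces to the variational formula \eqref{eq:ExistFullDim} together with the Gibbs property of \autoref{coro:parry}.

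To obtain $\uDim{B}K\ge B$ I would count approximate cubes from below, adapting the argument of \cite{KenyonPeres1996} to the setting of weak specification. Fix $k$ and a tuple $(J_{1},\dots,J_{s})\in\prod_{i=1}^{s}\calL_{\tk[i]-\tk[i-1]}(X_{i})$; lift each $J_{i}$ to a word $A_{i}\in\calL(X)$ of the same length with $\tau_{i}(A_{i})=J_{i}$ (possible since $X_{i}=\tau_{i}(X)$), and splice the $A_{i}$ together using connecting words $W_{2},\dots,W_{s}$ of length $\le p$ supplied by weak specification, obtaining $G=A_{1}W_{2}A_{2}\cdots W_{s}A_{s}\in\calL(X)$ of length $k+t$ with $0\le t\le (s-1)p$. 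Because the cumulative splicing displacement is $O(1)$, applying $\tau_{i}$ to the $i$-th block of $G$ (in the sense of \eqref{eq:Words4NumCubes} for $\calD_{k+t}(X)$) returns a word agreeing with $J_{i}$ outside $O(1)$ symbols near each end; hence the map sending $(J_{1},\dots,J_{s})$ to the pair of $t$ and the element of $\calD_{k+t}(X)$ determined by $G$ has fibers of size bounded by a constant depending only on $s,p,\#\calA$. Pigeonholing over the $O(1)$ possible values of $t$ yields, for each $k$, an integer $t^{\ast}(k)\le (s-1)p$ with $\#\calD_{k+t^{\ast}(k)}(X)\gtrsim\prod_{i=1}^{s}\#\calL_{\tk[i]-\tk[i-1]}(X_{i})$. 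Since $\tfrac1k\log\prod_{i}\#\calL_{\tk[i]-\tk[i-1]}(X_{i})\to\sum_{i}(\theta_{i}-\theta_{i-1})h(X_{i})=(\log n_{s})B$ (computed as in the proof of \autoref{prop:Box-UB}) and the level-$k$ approximate cubes cover $K$ with bounded overlap, \autoref{lem:NumCubes} and taking $\limsup$ along $N=k+t^{\ast}(k)$ give $\uDim{B}K\ge B$. This block-alignment bookkeeping is the one genuinely technical point of the proof; I expect it to be the main obstacle.

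Granting $\uDim{B}K=B$, the equivalence \ref{itm:DimEq}$\Leftrightarrow$\ref{itm:ParryChain} is short. If $\tau_{i}\mu_{1}=\mu_{i}$ for all $i$, then $h(\tau_{i}\mu_{1})=h(\mu_{i})=h(X_{i})$, so $\dLY{\mu_{1}}=\sum_{i}c_{i}h(X_{i})=B$; by \eqref{eq:ExistFullDim}, $\dimH K=\max_{\mu}\dLY{\mu}\ge B$, hence $\dimH K=B$, i.e.\ \ref{itm:DimEq}. Conversely, if $\dimH K=B$, take the ergodic measure of full dimension $\eta$ from \autoref{prop:FullDimMeas}: then $\sum_{i}c_{i}h(\tau_{i}\eta)=\dLY{\eta}=B=\sum_{i}c_{i}h(X_{i})$ with $c_{i}>0$ and $h(\tau_{i}\eta)\le h(X_{i})$, forcing $h(\tau_{i}\eta)=h(X_{i})$ for every $i$. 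For $i=1$, where $\tau_{1}=\mathrm{id}$ and $X_{1}=X$, this says $\eta$ has maximal entropy on $X$, so $\eta=\mu_{1}$ by \autoref{coro:parry}; then each $\tau_{i}\mu_{1}=\tau_{i}\eta$ has entropy $h(X_{i})$, so $\tau_{i}\mu_{1}=\mu_{i}$, again by \autoref{coro:parry}.

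For \ref{itm:ParryChain}$\Leftrightarrow$\ref{itm:UniFiber-Entropy}$\Leftrightarrow$\ref{itm:UniFiber-lambda} I would use the Gibbs property directly. As $\tau_{i}$ acts coordinatewise, for $I\in\calL_{k}(X_{i})$ we have $\tau_{i}\mu_{1}([I])=\sum_{J\in\tau_{i}^{-1}(I)}\mu_{1}([J])\approx\#\tau_{i}^{-1}(I)\,e^{-kh(X_{1})}$ by \autoref{coro:parry}, uniformly in $I$ and $k$. If \ref{itm:ParryChain} holds then the left side is $\mu_{i}([I])\approx e^{-kh(X_{i})}$, giving $\#\tau_{i}^{-1}(I)\approx e^{k(h(X_{1})-h(X_{i}))}$, which is \ref{itm:UniFiber-Entropy}; conversely \ref{itm:UniFiber-Entropy} yields $\tau_{i}\mu_{1}([I])\approx e^{-kh(X_{i})}\approx 1/\#\calL_{k}(X_{i})$ uniformly, and since $\tau_{i}\mu_{1}$ is $\sigma$-invariant and ergodic (a push-forward of the ergodic $\mu_{1}$ along a factor map), the uniqueness clause of \autoref{coro:parry} applied to $X_{i}$ forces $\tau_{i}\mu_{1}=\mu_{i}$. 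The implication \ref{itm:UniFiber-Entropy}$\Rightarrow$\ref{itm:UniFiber-lambda} is immediate with $\lambda_{i}=h(X_{1})-h(X_{i})$, which is nonnegative and nondecreasing in $i$ since $X_{i+1}$ is a factor of $X_{i}$; and \ref{itm:UniFiber-lambda}$\Rightarrow$\ref{itm:UniFiber-Entropy} follows by summing $\#\tau_{i}^{-1}(I)\approx e^{\lambda_{i}k}$ over $I\in\calL_{k}(X_{i})$, which gives $\#\calL_{k}(X)\approx\#\calL_{k}(X_{i})\,e^{\lambda_{i}k}$ and hence $\lambda_{i}=h(X_{1})-h(X_{i})$ on letting $k\to\infty$.
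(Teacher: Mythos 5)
Your proposal is correct and follows essentially the same route as the paper: the box-dimension identity obtained by a weak-specification splicing count of approximate cubes (\autoref{lem:CardQk-Wsp} and \autoref{prop:BoxDim}), the equivalence of \ref{itm:DimEq} and \ref{itm:ParryChain} via the Ledrappier--Young/variational formula together with uniqueness of the measure of maximal entropy, and the equivalence of \ref{itm:ParryChain}, \ref{itm:UniFiber-Entropy}, \ref{itm:UniFiber-lambda} via the Gibbs property of \autoref{coro:parry}. The only real divergence is the lower bound on $\#\calD_{k}(X)$: where you keep full-length blocks, glue with connectors of length $\leq p$, and then handle the resulting length $k+t$ by an $O(1)$-misalignment and pigeonhole-over-$t$ argument (the step you rightly flag as the technical obstacle), the paper instead trims each block to length $\lfloor\theta_{i}k\rfloor-\lfloor\theta_{i-1}k\rfloor-p$ and pads the connecting words so that each block has total padding exactly $p$, making the glued word have length exactly $k$ and align exactly with the breakpoints, which eliminates that bookkeeping.
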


\autoref{thm:Ext-KP-DimCoin} is proved by adapting and extending some ideas in \cite{KenyonPeres1996}. When $ X $ is an irreducible subshift of finite type, Feng, Lo and Shen \cite[Theorem 1.7]{FengEtAl2020} provided an algorithm to decide whether \ref{itm:UniFiber-lambda} holds.

\subsection{The equivalence of \ref{itm:DimEq} and \ref{itm:ParryChain}}

We first compute the box dimension, which relies on the next lemma about the number of approximate cubes.

\begin{lemma}\label{lem:CardQk-Wsp} Let $ X $ be a subshift satisfying weak specification. Then for $ k \in \bbN $,
	\begin{equation*}
		\# \calQ_{k}(X) \approx \exp\left (k \sum_{i=1}^{s} (\theta_{i} - \theta_{i-1}) h(X_{i})\right ).
	\end{equation*}
\end{lemma}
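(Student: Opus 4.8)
The plan is to establish matching upper and lower bounds on $\#\calQ_{k}(X)$. By \autoref{lem:NumCubes} we have $\#\calQ_{k}(X) = \#\calD_{k}(X)$, so it suffices to estimate $\#\calD_{k}(X)$. The upper bound is essentially contained in the proof of \autoref{prop:Box-UB}: from \eqref{eq:UB-NumVects} and \eqref{eq:UB-by-TopEntropy} one gets
\[
	\log \#\calD_{k}(X) \leq \sum_{i=1}^{s} \log \#\calL_{\tk[i]-\tk[i-1]}(X_{i}) = k\sum_{i=1}^{s}(\theta_{i}-\theta_{i-1})h(X_{i}) + o(k),
\]
but since the statement claims a multiplicative constant (the $\approx$), I would need the sharper submultiplicativity estimate $\#\calL_{n}(X_{i}) \leq C_{i}\exp(n\, h(X_{i}))$ valid for all $n$, which follows from weak specification on $X_{i}$ (gluing words with a bounded connector of length $\leq p$ gives $\#\calL_{m+n}(X_{i}) \geq c\,\#\calL_{m}(X_{i})\#\calL_{n}(X_{i})$, hence the entropy is approached from above up to a constant by Fekete-type reasoning). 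Thus $\#\calD_{k}(X) \lesssim \exp(k\sum_{i}(\theta_{i}-\theta_{i-1})h(X_{i}))$.

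For the lower bound, the key is to show that a positive proportion of the tuples $(J_{1},\ldots,J_{s}) \in \prod_{i=1}^{s}\calL_{\tk[i]-\tk[i-1]}(X_{i})$ actually arise as $(\tau_{i}(I|_{\tk[i-1]}^{\tk[i]}))_{i=1}^{s}$ for some $I \in \calL_{k}(X)$. Here I would exploit weak specification systematically: given arbitrary $J_{i} \in \calL_{\tk[i]-\tk[i-1]}(X_{i})$ for each $i$, I want to build a word $I \in \calL_{k}(X)$ in $X$ whose $\tau_{i}$-image on the $i$-th block agrees with $J_{i}$ up to the choice of the first few symbols. The idea is to construct $I$ block by block, working from the coarsest projection $X_{s}$ inward: first pick a word in $X_{s}$ on coordinates $\tk[s-1]$ to $\tk[s]$ compatible with $J_{s}$; lift it through $\pi_{s}$ to $X_{s-1}$ (a fiber is nonempty since $\pi_{s}$ is surjective), and on the freshly exposed coordinates $\tk[s-2]$ to $\tk[s-1]$ further prescribe $J_{s-1}$; continue down to $X_{1} = X$. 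Weak specification is what lets me concatenate the pieces at successive block boundaries: to glue a word ending in one block to a prescribed word starting the next block I insert a connector of length $\leq p$, which only perturbs a bounded number of coordinates and hence changes the tuple $(J_{1},\ldots,J_{s})$ in a controlled way — at most $sp$ symbols total, costing a multiplicative constant $\prod_{i}(\#\calA_{i})^{p}$. Counting the free choices of the $J_{i}$ this produces exactly $\approx \prod_{i=1}^{s}\#\calL_{\tk[i]-\tk[i-1]}(X_{i})$ distinct tuples in $\calD_{k}(X)$, which by the same entropy asymptotics (now using weak specification on each $X_{i}$ for the matching lower bound $\#\calL_{n}(X_{i}) \gtrsim \exp(n\,h(X_{i}))$) is $\gtrsim \exp(k\sum_{i}(\theta_{i}-\theta_{i-1})h(X_{i}))$.

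Combining the two bounds gives the claimed $\approx$. The main obstacle I anticipate is the bookkeeping in the lower bound: making precise the inductive lifting-and-gluing construction so that the prescribed blocks $J_{i}$ on the projected alphabets genuinely survive (up to the bounded ambiguity introduced by the connectors and by the non-uniqueness of the $\pi_{i}$-lifts on the overlapping coordinates $\tk[i-2]$ to $\tk[i-1]$), and verifying that the resulting map from (slightly truncated) tuples into $\calD_{k}(X)$ is at most boundedly-many-to-one so that no multiplicative loss beyond a $k$-independent constant occurs. One must also be a little careful that $\tk[i]-\tk[i-1] \to \infty$ proportionally to $k$ so that the per-block entropy asymptotics and the weak-specification constants (which are $k$-independent) combine correctly; this is where the hypotheses $n_{1} > \cdots > n_{s}$, i.e.\ $\theta_{i} - \theta_{i-1} > 0$, enter.
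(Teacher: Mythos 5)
Your proposal follows essentially the same route as the paper's proof: reduce to counting $\calD_{k}(X)$ via \autoref{lem:NumCubes}, bound it above by the product of the block languages together with $\#\calL_{n}(X_{i}) \approx \exp(n\,h(X_{i}))$ (the paper gets this two-sided estimate from the Gibbs property in \autoref{coro:parry}, you from weak specification of $X_{i}$ directly — the same substance), and bound it below by lifting slightly truncated block words $J_{i}$ to $\calL(X)$ and gluing them with weak-specification connectors, checking that the resulting map into $\calD_{k}(X)$ is boundedly-many-to-one. The only streamlining worth noting is that your stepwise lifting through the chain $X_{s}\to\cdots\to X_{1}$ (and the talk of ``freshly exposed coordinates'', which mixes time blocks with alphabet coordinates) is unnecessary: surjectivity of $\tau_{i}$ lets you lift each $J_{i}$ directly to a word of $\calL(X)$, and the paper keeps your length bookkeeping exact by truncating each $J_{i}$ by $p$ and inserting connectors $W_{i}^{l},W_{i}^{r}$ with $\abs{W_{i}^{l}}+\abs{W_{i}^{r}}=p$ inside each block.
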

\begin{proof} For $ 1\leq i \leq s $, applying \autoref{coro:parry} to $ X_{i} $ gives
	\begin{equation}\label{eq:CardLang}
		\#\lang{k}{i} \approx \exp(k h(X_{i})).
	\end{equation}
	 By \autoref{lem:NumCubes} and \eqref{eq:Words4NumCubes},
	 \begin{equation*}
	 	\# \calQ_{k}(X) = \# \calD_{k}(X) \leq \prod_{i=1}^{s} \# \calL_{\tk[i]-\tk[i-1]} (X_{i}) \approx  \exp\left (k \sum_{i=1}^{s} (\theta_{i} - \theta_{i-1}) h(X_{i})\right ).
	 \end{equation*}
	 
 	Next we control $ \# \calQ_{k}(X) $ from below. Let $ p $ be as in \autoref{def:WeakSpec}. For $ k \in \bbN $ large, define 
 	\begin{equation}\label{eq:def-calJk}
 		\calJ_{k} = \prod_{i=1}^{s} \calL_{\tk[i]-\tk[i-1]-p}(X_{i}),
 	\end{equation}
 	and
 	\begin{equation*}
 		\calW = \left \{\prod_{i=1}^{s} (W_{i}^{l}, W_{i}^{r}) \in \prod_{i=1}^{s}\calL(X) \times \calL(X) \colon \abs{W_{i}^{l}} + \abs{W_{i}^{r}} = p \right \}.
 	\end{equation*}
 	We proceed to construct an map $ g $ from a subset of $ \calJ_{k} \times  \calW $ to $ \calD_{k}(X) $.
 	For $  \mathbf{J} = (J_{1}, \ldots, J_{s}) \in \calJ_{k} $, by the surjectivity of $\tau_{i}$ there exists $ \mathbf{I} = (I_{1}, \ldots, I_{s}) \in \calL(X)^{s}$ such that $ \tau_{i}(I_{i}) = J_{i} $ for $ 1 \leq i \leq s $. Then we define $ W_{i}^{l}$ and $ W_{i}^{r} $ recursively for $ 1 \leq i \leq s $. Take $ W_{1}^{l} = \varnothing $ and $ W_{1}^{r} \in \calL_{p}(X) $ such that $ I_{1} W_{1}^{r} \in \calL(X) $.  Suppose for some $ 1 \leq i \leq s-1 $ we have found $ W_{1}^{l}, W_{1}^{r}, \ldots, W_{i}^{l}, W_{i}^{r}$ such that 
 \begin{equation*}
 	W_{1}^{l}I_{1}W_{1}^{r} \cdots W_{i}^{l}I_{i}W_{i}^{r} \in \calL(X) \mAnd \abs{W_{j}^{l}} + \abs{W_{j}^{r}} = p \mFor 1\leq j \leq i . 
 \end{equation*}
 By \autoref{def:WeakSpec}, there exists $ W_{i+1}^{l} \in \calL(X) $ with $ \abs{W_{i+1}^{l}} \leq p $ such that $ W_{i}^{r} W_{i+1}^{l} I_{i+1} \in \calL(X) $. Take $ W_{i+1}^{r} \in \lang{p - \abs{W_{i+1}^{l}}}{} $ such that $ I_{i+1}W_{i+1}^{r} \in \calL(X) $. Hence we find $ \mathbf{W} = ( W_{1}^{l}, W_{1}^{r}, \ldots, W_{s}^{l}, W_{s}^{r}) $ such that
 	\begin{equation}\label{eq:W-Property}
 		W_{1}^{l} I_{1} W_{1}^{r} \cdots W_{s}^{l} I_{s} W_{s}^{r}  \in \calL(X) \mAnd  \abs{W_{i}^{l}} + \abs{W_{i}^{r}} = p \mFor 1 \leq i \leq s.
 	\end{equation}
 	Hence for $ \mathbf{J} \in \calJ_{k} $, we find some $ \mathbf{W} \in \calW $ as above and define
 	\begin{equation*}
 		g(\mathbf{J}, \mathbf{W}) = \prod_{i = 1}^{s} \tau_{i}(W_{i}^{l} I_{i} W_{i}^{r} ).
 	\end{equation*}
 	Denote the domain and range of $g$ by $\mathrm{Dom}(g)$ and $\mathrm{Ran}(g)$ respectively. Since $ g $ is defined for all $ \mathbf{J} \in \calJ_{k} $, we have $ \# \mathrm{Dom}(g) \geq \# \calJ_{k} $. By \eqref{eq:W-Property} and \eqref{eq:Words4NumCubes}, $\mathrm{Dom}(g) \subset \calJ_{k}\times\calW $ and $\mathrm{Ran}(g) \subset \calD_{k}(X) $. Note that $  \# g^{-1}( \mathbf{ U}) \leq \# \calW $ for $ \mathbf{U} \in \mathrm{Ran}(g) $ by the definition of $ g $. Hence
 	\begin{align*}
 	\# \calQ_{k}(X) & = \# \calD_{k}(X) &  \text{by \autoref{lem:NumCubes}} \\ 
 	& \geq \# \mathrm{Ran}(g) & \text{by } \mathrm{Ran}(g) \subset \calD_{k}(X) \\ 
 	& \geq \frac{\# \mathrm{Dom}(g)}{\# \calW } & \text{by } \# g^{-1}( \mathbf{U}) \leq \# \calW,\, \mathbf{U} \in \mathrm{Ran}(g) \\
 	& \geq \frac{\# \calJ_{k}}{\# \calW } & \text{by } \# \mathrm{Dom}(g) \geq \# \calJ_{k} \\
 		& \gtrsim \prod_{i=1}^{s} \# \calL_{\tk[i]-\tk[i-1]-p}(X_{i}) & \text{by } \eqref{eq:def-calJk} \text{ and } \# \calW
 		\leq (\# \calA)^{2p}\\
 		& \approx \exp\left (k \sum_{i=1}^{s} (\theta_{i} - \theta_{i-1}) h(X_{i})\right ) & \text{by } \eqref{eq:CardLang},
 	\end{align*}
 	for $ k $ large enough depending on $p$.
\end{proof}

Since $ \calQ_{k}(X) $ covers $ K $ by approximate cubes with side length $ \approx n_{s}^{-k} $,  \autoref{lem:CardQk-Wsp} implies the following formula for the box dimension of $K$.

\begin{proposition}\label{prop:BoxDim}
	Let $ X $ be a subshift satisfying weak specification and $ K = R(X)$. Then
	\begin{equation}\label{eq:BoxDim}
		\dimB K = \sum_{i=1}^{s} \left ( \frac{1}{\log n_{i}} - \frac{1}{\log n_{i-1}}\right ) h(X_{i})
	\end{equation}
	where $ h(X_{i}) $ is as in \eqref{eq:Def-TopEntropy}.
\end{proposition}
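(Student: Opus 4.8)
The plan is to compute, up to a multiplicative constant, the number $N_{k}$ of sets of diameter at most $n_{s}^{-k}$ needed to cover $K$, show $N_{k}\approx\#\calQ_{k}(X)$, and then read off the box dimension from \autoref{lem:CardQk-Wsp}. Since $\frac{1}{\log n_{s}}(\theta_{i}-\theta_{i-1})=\frac{1}{\log n_{i}}-\frac{1}{\log n_{i-1}}$ by \eqref{eq:def-theta} (with the convention $n_{0}=\infty$, so that $\theta_{0}=0$), the asserted value is exactly $\frac{1}{\log n_{s}}\sum_{i=1}^{s}(\theta_{i}-\theta_{i-1})h(X_{i})$.

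The upper bound $N_{k}\lesssim\#\calQ_{k}(X)$ is immediate: $\calQ_{k}(X)$ covers $K$ by sets of diameter $\approx n_{s}^{-k}$, so, after enlarging $k$ by a bounded constant to make the diameters genuinely $\leq n_{s}^{-k}$ (which changes $\#\calQ_{k}(X)$ only by a bounded factor, via \autoref{lem:CardQk-Wsp}), this collection is an admissible cover. The substance is the reverse bound, for which the key geometric observation is that at each fixed level $k$ any two cubes in $\calQ_{k}(X)$ are \emph{either identical or disjoint}. Indeed, for $d_{i-1}<j\leq d_{i}$ the $j$-th coordinate interval of $Q_{k}(I)$ is, by \eqref{eq:def-Rk}, an $n_{i}$-adic interval of generation $\tk[i]$ contained in $[0,1)$; two $n_{i}$-adic intervals of the same generation coincide or are disjoint, so $Q_{k}(I)=Q_{k}(J)$ when all coordinate intervals agree and the two cubes are disjoint otherwise. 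Moreover each coordinate side length equals $n_{i}^{-\tk[i]}\geq n_{i}^{-\theta_{i}k}=n_{s}^{-k}$, and each $Q_{k}(x)$ contains the point $R(x)\in K$.

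Given these facts I would argue as follows. Take any cover of $K$ by sets of diameter $\leq n_{s}^{-k}$. Each member of the cover meets at most $2^{d}$ of the cubes in $\calQ_{k}(X)$: in coordinate $j$ it spans an interval of length $\leq n_{s}^{-k}$, no longer than the grid spacing $n_{i}^{-\tk[i]}$ in that coordinate, hence meets at most two consecutive $n_{i}$-adic cells. Since every cube of $\calQ_{k}(X)$ contains a point of $K$ and so is met by the cover, a double count shows the cover has at least $2^{-d}\#\calQ_{k}(X)$ members. Thus $N_{k}\approx\#\calQ_{k}(X)$, and \autoref{lem:CardQk-Wsp} yields $\log N_{k}=k\sum_{i=1}^{s}(\theta_{i}-\theta_{i-1})h(X_{i})+O(1)$. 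As consecutive scales $n_{s}^{-k}$ and $n_{s}^{-(k+1)}$ differ by the fixed factor $n_{s}$ and the covering number is monotone, the box dimension exists and
\begin{equation*}
	\dimB K \;=\; \lim_{k\to\infty}\frac{\log N_{k}}{k\log n_{s}} \;=\; \frac{1}{\log n_{s}}\sum_{i=1}^{s}(\theta_{i}-\theta_{i-1})h(X_{i}) \;=\; \sum_{i=1}^{s}\Big(\frac{1}{\log n_{i}}-\frac{1}{\log n_{i-1}}\Big)h(X_{i}).
\end{equation*}

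The rewriting of the constant and the comparability of consecutive scales are routine; the only point requiring care — the main, if mild, obstacle — is the disjointness and bounded overlap of level-$k$ approximate cubes, which rests purely on the $n_{i}$-adic grid structure built into the map $R$ and uses no dynamics (weak specification enters only through \autoref{lem:CardQk-Wsp}).
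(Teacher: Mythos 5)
Your proposal is correct and follows essentially the same route as the paper: the paper deduces \autoref{prop:BoxDim} directly from \autoref{lem:CardQk-Wsp} together with the fact that $\calQ_{k}(X)$ covers $K$ by sets of diameter $\approx n_{s}^{-k}$, leaving the standard grid-cell versus covering-number comparison implicit, which is exactly the part you spell out. Your bounded-overlap count is the right way to make that implicit step precise (working with the closed cubes, so that $R(x)\in\overline{Q_{k}(x)}$ and the overlap constant becomes $3^{d}$ rather than $2^{d}$, is the only cosmetic adjustment needed).
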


It is worth pointing out that recently Jurga~\cite{Jurga2023} showed that the box dimension of $ K = R(X) $ may not exist if $ X $ does not satisfy weak specification. 

The equivalence of \ref{itm:DimEq} and \ref{itm:ParryChain} follows immediately.

\begin{proof}[{Proof of \ref{itm:DimEq} $ \iff $  \ref{itm:ParryChain}}]
	By \autoref{thm:dimLY-ExistFullDim}, let $ \eta $ be a ergodic measure on $ X $ such that $ \dLY{\eta} = \dimH K $. By \eqref{eq:DefWeightedEntropy} and \autoref{prop:BoxDim}, \ref{itm:DimEq} is equivalent to
	\begin{equation}\label{eq:Diff-MeasTopEntropy}
		 \sum_{i=1}^{s} \left (\frac{1}{n_{i}} - \frac{1}{n_{i-1}}\right ) \left  ( h(X_{i}) - h(\tau_{i} \eta) \right  ) = 0.
	\end{equation}
	Since $ h(\tau_{i}\eta) \leq  h(X_{i}) $ and $ n_{i} < n_{i-1} $ for $ 1\leq i \leq s $, \eqref{eq:Diff-MeasTopEntropy} holds if and only if
	\begin{equation*}
		h(X_{i}) = h(\tau_{i}\eta) \mFor 1 \leq i \leq s.
	\end{equation*}
	The proof is finished by \autoref{coro:parry}.
\end{proof}

\subsection{The equivalence of \ref{itm:ParryChain}, \ref{itm:UniFiber-Entropy} and \ref{itm:UniFiber-lambda}}

\begin{proof}[{Proof of the equivalence of \ref{itm:ParryChain}, \ref{itm:UniFiber-Entropy} and \ref{itm:UniFiber-lambda}}] By \autoref{coro:parry}, for $ 1\leq i \leq s $ and $ I \in \lang{k}{i} $,
	\begin{equation}\label{eq:CardLang2}
		\mu_{i}(I) \approx \frac{1}{\# \lang{k}{i}} \approx \exp(-k h(X_{i})).
	\end{equation}
	Fix any $ i \in \{1, \ldots , s\} $ and $ k \in \bbN $.
	
    If \ref{itm:ParryChain} holds, then for $ I \in \calL_{k}(X_{i})$,
	\begin{equation*}
	\exp(-k h(X_{i})) \approx \mu_{i}(I) = \tau_{i}\mu_{1}(I) = \sum_{J \in \tau_{i}^{-1}(I) } \mu(J) \approx \# \tau_{i}^{-1}(I) \cdot \exp(-k h(X_{1})).
	\end{equation*}
	This implies \ref{itm:UniFiber-Entropy}.
	
	It is immediate that \ref{itm:UniFiber-lambda} follows from \ref{itm:UniFiber-Entropy} by taking $ \lambda_{i} = h(X_{1}) - h(X_{i})$.
	
	
	Next we show that \ref{itm:UniFiber-lambda} implies \ref{itm:ParryChain}. By \eqref{eq:CardLang2} and \ref{itm:UniFiber-lambda}, for $ I \in \calL_{k}(X_{i})$,
	\begin{equation}\label{eq:reg-tau-i}
		\tau_{i}\mu_{1}(I) = \sum_{J \in \tau_{i}^{-1}(I)} \mu_{1}(I) \approx \# \tau_{i}^{-1}(I) \cdot \exp(-k h(X_{1})) \approx \exp(k(\lambda_{i}-h(X_{1}))).
	\end{equation}
	Then
	\begin{equation*}
		 \# \calL_{k}(X_{i}) \cdot \exp(k(\lambda_{i}-h(X_{1}))) \approx \sum_{I \in \calL_{k}(X_{i})} \tau_{i}\mu_{1}(I) = 1.
	\end{equation*}
	Since $\# \calL_{k}(X_{i}) \approx \exp(kh(X_{i})) $ by \eqref{eq:CardLang2}, we have $ \lambda_{i} =  h(X_{1}) - h(X_{i}) $. Combining \eqref{eq:reg-tau-i} and \eqref{eq:CardLang2} gives
	\begin{equation*}
		\tau_{i}\mu_{1}(I) \approx \exp(-kh(X_{i})) \approx \mu_{i}(I).
	\end{equation*}
	This shows \ref{itm:ParryChain} by \autoref{coro:parry}.
\end{proof}



\section{Proof of \autoref{thm:DimCoin}\ref{itm:A=>B}}\label{sec:Pf-DC=>EG}

This section is devoted to the proof of  \autoref{thm:DimCoin}\ref{itm:A=>B}. We begin with a lemma controlling the number of cylinders in the approximate cube by the topological entropies.
\begin{lemma}\label{lem:Gamma-UB}
	Let $ X $ be a subshift satisfying weak specification and $ K = R(X)$.
	If $ \dimH K = \dimB K $, then for $ x \in X $ and $ k \in \bbN $,
	\begin{equation}\label{eq:Gamma-UB}
		\# \Gamma_{k}(x) \lesssim \exp\bigg ( k \sum_{i=1}^{s} (\theta_{i} - \theta_{i-1}) (h(X)-h(X_{i})) \bigg),
	\end{equation}
	where $ \Gamma_{k}(\cdot) $ is as in \eqref{eq:WordsInApproxCube}.
\end{lemma}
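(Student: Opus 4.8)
The plan is to combine \autoref{thm:Ext-KP-DimCoin} with a block-decomposition counting argument. By \autoref{thm:Ext-KP-DimCoin}, the hypothesis $\dimH K = \dimB K$ is equivalent to statement \ref{itm:UniFiber-Entropy} there, namely the \emph{uniform} fiber estimate $\# \tau_i^{-1}(I) \approx \exp\big(\abs{I}(h(X) - h(X_i))\big)$ for every $1 \leq i \leq s$ and $I \in \calL(X_i)$, where $h(X) = h(X_1)$ because $X_1 = X$, and the implied constants do not depend on $I$ or $\abs{I}$.

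The key observation is that the conditions defining $\Gamma_k(x)$ localize to consecutive time blocks. Set $\theta_0 = 0$, so that $0 = \theta_0 < \theta_1 < \cdots < \theta_s = 1$, and for $1 \leq i \leq s$ call $\{\tk[i-1]+1, \ldots, \tk[i]\}$ the \emph{$i$-th block}; these blocks partition $\{1, \ldots, k\}$. Fix a position $\ell$ in the $i$-th block, i.e.\ $\tk[i-1] < \ell \leq \tk[i]$. Then $\tk[j] \geq \ell$ holds exactly when $j \geq i$, so among the requirements $\tau_j(I|\tk[j]) = \tau_j(x|\tk[j])$, $1 \leq j \leq s$, the ones constraining the symbol $i_\ell$ are precisely those with $j \geq i$; and since $\tau_j$ factors through $\tau_i$ when $j > i$, the effective constraint on $i_\ell$ is just $\tau_i(i_\ell) = \tau_i(x_\ell)$. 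Consequently every $I \in \Gamma_k(x)$ has the property that, for each $1 \leq i \leq s$, its $i$-th block $I|_{\tk[i-1]}^{\tk[i]} \in \calL_{\tk[i]-\tk[i-1]}(X)$ maps under $\tau_i$ onto the prescribed word $\tau_i(x|_{\tk[i-1]}^{\tk[i]})$; that is, $I|_{\tk[i-1]}^{\tk[i]} \in \tau_i^{-1}\big(\tau_i(x|_{\tk[i-1]}^{\tk[i]})\big)$, the fiber taken inside $\calL_{\tk[i]-\tk[i-1]}(X)$.

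Since $I$ is the concatenation of its $s$ blocks, the map $I \mapsto \big(I|_{\tk[i-1]}^{\tk[i]}\big)_{i=1}^{s}$ is injective on $\Gamma_k(x)$, and hence, by the fiber estimate above together with $h(X) \geq h(X_i)$,
\begin{equation*}
	\# \Gamma_k(x) \;\leq\; \prod_{i=1}^{s} \# \tau_i^{-1}\big(\tau_i(x|_{\tk[i-1]}^{\tk[i]})\big) \;\lesssim\; \prod_{i=1}^{s} \exp\!\big((\tk[i]-\tk[i-1])(h(X) - h(X_i))\big).
\end{equation*}
Finally $\tk[i]-\tk[i-1] \leq (\theta_i - \theta_{i-1})k + 1$ and $\sum_{i=1}^{s}(h(X) - h(X_i))$ is a constant, so the integer roundoff only enlarges the implied constant, and we obtain
\begin{equation*}
	\# \Gamma_k(x) \;\lesssim\; \exp\!\bigg(k\sum_{i=1}^{s}(\theta_i - \theta_{i-1})(h(X) - h(X_i))\bigg),
\end{equation*}
which is the claim.

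The only substantive point is the localization step in the second paragraph, namely recognizing that on the $i$-th block the nested conditions defining $\Gamma_k(x)$ collapse to the single fiber condition for $\tau_i$; once this is in place the estimate is just the product of the uniform fiber bounds from \autoref{thm:Ext-KP-DimCoin}, and the floor-function bookkeeping is harmless. Note in particular that the $i = 1$ factor is trivial, since $\tau_1$ is the identity and $h(X) - h(X_1) = 0$, matching $\exp(0) = 1$; and a block can be empty for small $k$, in which case the corresponding fiber is a single empty word, again consistent.
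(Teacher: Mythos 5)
Your proposal is correct and follows essentially the same route as the paper's proof: you localize the nested conditions defining $\Gamma_{k}(x)$ to the blocks $\{\tk[i-1]+1,\ldots,\tk[i]\}$ (using that $\tau_{j}$ factors through $\tau_{i}$ for $j\geq i$), bound $\#\Gamma_{k}(x)$ by $\prod_{i=1}^{s}\#\tau_{i}^{-1}\left(\tau_{i}(x|_{\tk[i-1]}^{\tk[i]})\right)$ via the injective block decomposition, and then apply the uniform fiber estimate of \autoref{thm:Ext-KP-DimCoin}\ref{itm:UniFiber-Entropy}. Your additional bookkeeping about floor-function roundoff and possibly empty blocks only refines what the paper absorbs into the implied constant.
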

\begin{proof}
	For $1 \leq i \leq j \leq s $ and $ I, J \in \calL(X) $, $ \tau_{i}(I) = \tau_{i}(J) $ implies $ \tau_{j}(I) = \tau_{j}(J) $ since $ \tau_{j} = \pi_{j} \circ \cdots \circ \circ \pi_{i+1} \circ \tau_{i} $. Then by \eqref{eq:WordsInApproxCube},
	\begin{align*}
		\# \Gamma_{k}(x) & = \# \bigg\{ I \in \calL_{k}(X) \colon \tau_{i}(I|\tk[i]) = \tau_{i}(x|\tk[i]) \text{ for } 1 \leq i \leq s \bigg \}                              \\
		                 & = \# \bigg\{ I \in \calL_{k}(X) \colon \tau_{i}(I|_{\tk[i-1]}^{\tk[i]} ) = \tau_{i}(x|_{\tk[i-1]}^{\tk[i]}) \text{ for } 1 \leq i \leq s \bigg \} \\
		                 & \leq \prod_{i=1}^{s} \# \tau_{i}^{-1}\left ( \tau_{i}(x|_{\tk[i-1]}^{\tk[i]}) \right )                                                                                      \\
		                 & \approx \prod_{i = 1}^{s} \exp\bigg (k (\theta_{i} - \theta_{i-1}) (h(X)-h(X_{i}))  \bigg)
	\end{align*}
	where the last equality is by \ref{itm:UniFiber-Entropy} of \autoref{thm:Ext-KP-DimCoin}.
\end{proof}

%
%

Now we are ready to prove \autoref{thm:DimCoin}\ref{itm:A=>B}.

\begin{proof}[Proof of \autoref{thm:DimCoin}\ref{itm:A=>B}] Suppose $ \dimH K = \dimB K = \gamma $. Then \autoref{prop:BoxDim} shows
	\begin{equation}\label{eq:ValGamma}
		\begin{aligned}
			\gamma & = \frac{1}{\log n_{s}} \sum_{i=1}^{s}(\theta_{i} - \theta_{i-1})h(X_{i}).
		\end{aligned}
	\end{equation}
	
	We first show $  \haus{\gamma}{K} > 0  $. By \autoref{coro:parry}, let $ \mu $ be the unique measure of maximal entropy on $ X $. Then
	\begin{equation}\label{eq:muParry}
		\mu(I) \approx \exp(-k h(X)) \quad \text{for } I \in \calL_{k}(X).
	\end{equation}
	For $ x \in X $ and $ k \in \bbN $,
	\begin{align*}
		R\mu(Q_{k}(x)) & = \sum_{I \in \Gamma_{k}(x)}  \mu(I)                                                                                                                                                  & \text{by \autoref{lem:MassAppCube}} \\
		               & \approx \exp(-k h(X)) \cdot  \#\Gamma_{k}(x)                                                                                        & \text{by } \eqref{eq:muParry}         \\
		               & \lesssim \exp\left ( - k \left (h(X) - \sum_{i=1}^{s} (\theta_{i} - \theta_{i-1}) (h(X)-h(X_{i})) \right ) \right ) & \text{by \autoref{lem:Gamma-UB}} \\
		               & = \exp \left( - k \left ( \sum_{i=1}^{s}(\theta_{i}-\theta_{i-1})h(X_{i})\right )  \right) & \text{by } \theta_{s} = 1,\, \theta_{0} = 0                       \\
		               & = n_{s}^{-k\gamma} & \text{by } \eqref{eq:ValGamma}.
	\end{align*}
	This shows $ \haus{\gamma}{K} \gtrsim 1 $ by \autoref{lem:RogersTaylor}.

	Next we prove $ \haus{s}{K} < \infty $. Since $ \calQ_{k}(X) $ covers $ K $ by the $k$-level approximate cubes with side lengths $ \approx n_{s}^{-k} $, \autoref{lem:CardQk-Wsp} implies that for some $ C > 0 $,
	\begin{equation*}
		\hausO{\gamma}{Cn_{s}^{-k}}{K} \lesssim n_{s}^{- k \gamma} \cdot \# \calQ_{k}(X)  \approx n_{s}^{- k \gamma} \exp \left ( k \sum_{i=1}^{s}(\theta_{i}-\theta_{i-1}) h(X_{i})\right) = 1,
	\end{equation*}
	where the last equality is by \eqref{eq:ValGamma}. Letting $ k \to \infty $ gives $ \haus{\gamma}{K} \lesssim 1 $.
\end{proof}

\section{Proof of \ref{itm:B=>C} and \ref{itm:s<=2} in \autoref{thm:DimCoin}}\label{sec:EG=>DC}

\autoref{thm:DimCoin}\ref{itm:B=>C} is contained in \autoref{prop:HausEqParry}. \autoref{thm:DimCoin}\ref{itm:s<=2} follows from the combination of \autoref{lem:ParryFullDim=>DimEq} and \ref{itm:A=>B} and \ref{itm:B=>C} of \autoref{thm:DimCoin}.

\subsection{Proof of \autoref{thm:DimCoin}\ref{itm:B=>C}}\label{subsec:Haus=MME}
We first show that the Hausdorff measures of $R$-images of cylinders with the same length are comparable to each other. 

\begin{lemma}\label{lem:HausCylinder=}
	Let $ X $ be a subshift satisfying weak specification and $ K = R(X)$. Let $ \varphi $ be a gauge function. Then for $ k \in \bbN $ and $ I \in \calL_{k}(X)$,
	\begin{equation*}
		\haus{\varphi}{R([I]) \intxn K } \approx \haus{\varphi}{\Lambda^{-k} K}.
	\end{equation*}
\end{lemma}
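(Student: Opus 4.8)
The plan is to use the self-affine relation $R=R_k+\Lambda^{-k}(R\circ\sigma^k)$ from \eqref{eq:R=Rk+kR} to realise $R([I])\cap K$ as a rescaled translate of $R$ applied to a descendant of $X$, and then to compare that descendant with $X$ itself via weak specification. Concretely, I would set $X^{(I)}:=\sigma^{k}([I]\cap X)$, a compact subset of $\calA^{\bbN}$. Since $R(x)=R_k(I)+\Lambda^{-k}R(\sigma^k x)$ for $x\in[I]\cap X$, we get $R([I]\cap X)=R_k(I)+\Lambda^{-k}R(X^{(I)})$, and this set lies inside $[0,1]^d$ without wrapping around $\bbT^d$ because $R_k(I)_j+n_i^{-k}\le 1$ for each $j$. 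Two enclosures turn the lemma into a statement about $X^{(I)}$: on the one hand $R([I])\cap K\supseteq R([I]\cap X)$; on the other hand, if $z=R(x)\in R([I])\cap K$ with $x\in X$, then $z$ lies in both level-$k$ grid rectangles $R_k(I)+\Lambda^{-k}[0,1]^d$ and $R_k(x|k)+\Lambda^{-k}[0,1]^d$, which must then be adjacent, so $x|k$ ranges over at most $2^d$ words and $R([I])\cap K\subseteq\bigcup_{I'}R([I'\,]\cap X)$ over those $\le 2^d$ words. By translation-invariance of $\haus{\varphi}{\cdot}$ in the Euclidean metric, it therefore suffices to prove $\haus{\varphi}{\Lambda^{-k}R(X^{(I)})}\approx\haus{\varphi}{\Lambda^{-k}K}$, uniformly in $k$ and $I$.

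The bound $\lesssim$ is immediate: $X^{(I)}\subseteq X$, so $R(X^{(I)})\subseteq K$, so $\Lambda^{-k}R(X^{(I)})\subseteq\Lambda^{-k}K$ and $\haus{\varphi}{\cdot}$ is monotone. For the reverse bound I would invoke weak specification with constant $p$. Given $y\in X$, \autoref{def:WeakSpec} applied to $I$ and the prefixes $y|n$, followed by a pigeonhole over the finitely many words of length $\le p$ and a compactness argument in $X$, yields a word $W=W(y)$ with $|W|\le p$ and $IWy\in X$, hence $Wy\in X^{(I)}$. Feeding $Wy$ into \eqref{eq:R=Rk+kR} gives $R(Wy)=R_{|W|}(W)+\Lambda^{-|W|}R(y)$, so
\[
\Lambda^{-k}R(y)=\Lambda^{\,|W|}\big(\Lambda^{-k}R(Wy)\big)-\Lambda^{\,|W|-k}R_{|W|}(W)\ \in\ \Lambda^{\,|W|}\big(\Lambda^{-k}R(X^{(I)})\big)-\Lambda^{\,|W|-k}R_{|W|}(W).
\]
Letting $y$ range over $X$ and $W$ over the (finitely many) admissible words of length $\le p$ displays $\Lambda^{-k}K$ as a union of at most $(p+1)(\#\calA)^{p}$ translated copies of the finitely many sets $\Lambda^{\,j}\big(\Lambda^{-k}R(X^{(I)})\big)$, $0\le j\le p$. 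Summing $\haus{\varphi}{\cdot}$ over these boundedly many pieces and using $\haus{\varphi}{\Lambda^{\,j}E}\lesssim\haus{\varphi}{E}$ for the fixed diagonal maps $\Lambda^{\,j}$ gives $\haus{\varphi}{\Lambda^{-k}K}\lesssim\haus{\varphi}{\Lambda^{-k}R(X^{(I)})}$, with constants depending only on $p$, $\#\calA$ and $d$; combined with the easy bound this finishes the proof.

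I expect the delicate point to be the last step of the reverse inequality: controlling $\haus{\varphi}{\Lambda^{\,j}E}$ against $\haus{\varphi}{E}$ for the fixed expanding maps $\Lambda^{\,j}$ with $1\le j\le p$, which satisfy $\|\Lambda^{\pm j}\|\le m_1^{p}$. Since only finitely many such maps occur it is enough to treat each one, and a bounded (if anisotropic) dilation cannot decrease $\haus{\varphi}{\cdot}$ by more than a bounded factor once the comparison has been reduced, as above, to boundedly many pieces at the single scale $\Lambda^{-k}$; moreover, if $\haus{\varphi}{\Lambda^{-k}K}\in\{0,\infty\}$ the claim is automatic, and otherwise the two-sided density estimate of \autoref{lem:RogersTaylor} applied to $\Haus{\varphi}\vert_{K}$ pins down both quantities up to uniform constants. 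Everything else is routine bookkeeping with the coding maps $R$ and $R_k$.
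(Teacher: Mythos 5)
Your reduction to the follower set $X^{(I)}=\sigma^{k}([I]\cap X)$, the easy inclusion giving $\haus{\varphi}{\Lambda^{-k}R(X^{(I)})}\leq\haus{\varphi}{\Lambda^{-k}K}$, and the pigeonhole/compactness upgrade of weak specification to ``for every $y\in X$ there is $W=W(y)$ with $\abs{W}\leq p$ and $IWy\in X$'' are all correct (the $2^{d}$ versus $3^{d}$ neighbour count is immaterial). The genuine gap is the final step of the lower bound: you need $\haus{\varphi}{\Lambda^{j}E}\lesssim\haus{\varphi}{E}$ for the expanding maps $\Lambda^{j}$, $1\leq j\leq p$, with a constant independent of $E$ and $k$. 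But $\varphi$ is an \emph{arbitrary} gauge function; no doubling condition is assumed (and none can be imposed, since \autoref{prop:HausEqParry} quantifies over all gauges with $0<\haus{\varphi}{K}<\infty$). For a non-doubling $\varphi$, dilating a set by a bounded factor can increase $\Haus{\varphi}$ by an unbounded factor: Hausdorff measure is computed from covers of arbitrarily small mesh, and dilating a cover multiplies each term $\varphi(\abs{E_{i}})$ by $\varphi(m_{1}^{j}\abs{E_{i}})/\varphi(\abs{E_{i}})$, which is unbounded at small scales for non-doubling gauges; having only boundedly many pieces ``at the single scale $\Lambda^{-k}$'' does not help. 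Your two fallbacks do not repair this: if $\haus{\varphi}{\Lambda^{-k}K}=\infty$ the claim is not automatic (the lower bound is then precisely the nontrivial assertion that $\haus{\varphi}{R([I])\cap K}=\infty$), and invoking \autoref{lem:RogersTaylor} with $\nu=\Haus{\varphi}\vert_{K}$ presupposes $0<\haus{\varphi}{K}<\infty$, which is not a hypothesis of the lemma and would in any case be circular with the way the lemma is used to prove \autoref{prop:HausEqParry}.

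The paper closes exactly this point by a different device, which you could graft onto your setup: never apply an expanding map to the unknown set. Instead push each piece down to the common deeper scale $k+p$ using contractions (which never increase $\Haus{\varphi}$ for any gauge): with $X_{W}=\{y\in X\colon IWy\in X\}$ one has $R(X^{(I)})\supset R_{\abs{W}}(W)+\Lambda^{-\abs{W}}R(X_{W})$, hence $\haus{\varphi}{\Lambda^{-k}R(X^{(I)})}\geq\haus{\varphi}{\Lambda^{-k-\abs{W}}R(X_{W})}\geq\haus{\varphi}{\Lambda^{-k-p}R(X_{W})}$; summing over the finitely many admissible $W$ with $\abs{W}\leq p$ and using $X=\Union_{W}X_{W}$ gives $\haus{\varphi}{\Lambda^{-k}R(X^{(I)})}\gtrsim\haus{\varphi}{\Lambda^{-k-p}K}$. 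Then climb from scale $k+p$ back to scale $k$ not by dilation but by the exact tiling identity $\Lambda^{-k}T^{-p}(E)=\Union_{J\in\calA^{p}}\left(\Lambda^{-k-p}E+\Lambda^{-k}R_{p}(J)\right)$ with $E=K$: translation invariance and subadditivity give $\haus{\varphi}{\Lambda^{-k-p}K}\geq(\#\calA)^{-p}\haus{\varphi}{\Lambda^{-k}T^{-p}K}\geq(\#\calA)^{-p}\haus{\varphi}{\Lambda^{-k}K}$, using $K\subset T^{-p}K$. This is the argument in \eqref{eq:LB-cy-med}--\eqref{eq:LB-HausCylinder}, and it uses only translation invariance and subadditivity of $\Haus{\varphi}$, never a doubling property of $\varphi$; with this substitution your proof goes through.
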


\begin{proof}
	For $ J \in\calL(X)$, write $ \calD_{J} = \{ j \in \calA \colon Jj \in \calL(X)\}$.
	By the translation-invariance of $ \Haus{\varphi}$ and \eqref{eq:R=Rk+kR},
	\begin{equation}\label{eq:HausCylinder}
		\haus{\varphi}{R([J]\intxn X)} = \sum_{j\in\calD_{J}} \haus{\varphi}{R([Jj]\intxn X)} = \sum_{j\in\calD_{J}} \haus{\varphi}{\Lambda^{-\abs{J}}R([j]\intxn X)}.
	\end{equation}
	Then since $ \calD_{I} \subset \calL_{1}(X)$, $ X = \Union_{j\in\calL_{1}(X)} [j]\intxn X$, and $ K = R(X)$,
	\begin{equation}\label{eq:UB-HausCylinder}
		\haus{\varphi}{R([I]\intxn X)} \leq \sum_{j\in\calL_{1}(X)} \haus{\varphi}{\Lambda^{-k}R([j]\intxn X)} = \haus{\varphi}{\Lambda^{-k}R(X)} = \haus{\varphi}{\Lambda^{-k}K}.
	\end{equation}
	
	Next we give a lower bound on $\haus{\varphi}{R([I]\intxn X)}$. Let $ p $ be as in \autoref{def:WeakSpec}. For $ j \in \calL_{1}(X) $, there exists $ W  \in \calL(X)$ with $ \abs{W} \leq p $ such that $ I W j \in \calL(X)$. Then
	\begin{equation}\label{eq:LB-tmp}
		\begin{aligned}
			\haus{\varphi}{R([I]\intxn X)} & \geq \haus{\varphi}{R( [IWj] \intxn X ) } \\
			& \geq \haus{\varphi}{\Lambda^{-k-\abs{W}} R([j]\intxn X) } & \text{by } \eqref{eq:HausCylinder} \\
			& \geq \haus{\varphi}{\Lambda^{-k-p} R([j]\intxn X) },
		\end{aligned}
	\end{equation}
	where the last inequality is by $ \abs{W} \leq p $ and $ \Lambda^{-1}$ is contracting. Summing \eqref{eq:LB-tmp} over $ j \in \calL_{1}(X)$ gives
	\begin{equation}\label{eq:LB-cy-med}
		\haus{\varphi}{R([I]\intxn X)} \geq \frac{1}{\#\calA}\haus{\varphi}{\Lambda^{-k-p} R(X)}.
	\end{equation}
	Notice that $ \Lambda^{-k} T^{-p}(E) = \Lambda^{-k-p}E + \Lambda^{-k} \sum_{J\in \calA^{p}} R_{p}(J)$  for $ E \subset \bbT^{d}$. By the translation-invariance of $\Haus{\varphi}$,
	\begin{equation*}
		\haus{\varphi}{\Lambda^{-k-p} E} = \frac{1}{(\# \calA)^{p}} \haus{\varphi}{\Lambda^{-k}T^{-p}(E)}.
	\end{equation*}
	Then by \eqref{eq:LB-cy-med}, taking $ E = R(X) $ in the above equation implies that
	\begin{equation} \label{eq:LB-HausCylinder}
		\haus{\varphi}{R([I]\intxn X)} \geq \frac{1}{(\#\calA)^{p+1}}\haus{\varphi}{\Lambda^{-k}T^{-p}(R(X))} \geq \frac{1}{(\#\calA)^{p+1}}\haus{\varphi}{\Lambda^{-k}K},
	\end{equation}
	where the last inequality is by $ K = R(X)$ and $ K \subset T^{-p}(K) $. Combining \eqref{eq:UB-HausCylinder} and \eqref{eq:LB-HausCylinder} finishes the proof.
\end{proof}

Now we are ready to prove \autoref{thm:DimCoin}\ref{itm:B=>C} which is contained in the following proposition.

\begin{proposition}\label{prop:HausEqParry} Let $X$ be a subshift satisfying weak specification and $ K = R(X)$. Let $ \mu $ be the measure of maximal entropy on $ X $. If $ 0 < \haus{\varphi}{K} < \infty $ for some gauge function $ \varphi $, then
	\begin{equation*}
		R\mu \approx \Haus{\varphi}|_{K}
	\end{equation*}
	where $ \Haus{\varphi}|_{K} $ denotes the restriction of $ \Haus{\varphi} $ to $ K $.
	In particular, $ \dimH R\mu = \dimH K $.
\end{proposition}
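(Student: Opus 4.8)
The plan is to prove the displayed comparison $R\mu\approx\Haus{\varphi}|_{K}$ and to read off the dimension equality from it. The first thing I would establish is a scaling identity for $\Haus{\varphi}$. For each $k$ the sets $R([I])\cap K$ with $I\in\calL_{k}(X)$ cover $K$, and each point of $\euclid$ lies in at most $2^{d}$ of the closed level-$k$ boxes $R([I])$ (at most two per coordinate), so by countable subadditivity on one side and a bounded-multiplicity count on the other,
\begin{equation*}
	\haus{\varphi}{K}\;\leq\;\sum_{I\in\calL_{k}(X)}\haus{\varphi}{R([I])\cap K}\;\leq\;2^{d}\haus{\varphi}{K}.
\end{equation*}
By \autoref{lem:HausCylinder=} every summand is $\approx\haus{\varphi}{\Lambda^{-k}K}$ with constants independent of $I$ and $k$, so, using $0<\haus{\varphi}{K}<\infty$,
\begin{equation*}
	\haus{\varphi}{\Lambda^{-k}K}\approx\frac{1}{\#\calL_{k}(X)}\qquad(k\in\bbN)
\end{equation*}
with uniform constants.

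Next I would compare the two measures on cylinders. Since a box $R([I])$ with $\abs{I}=k$ meets only $O_{d}(1)$ of the other level-$k$ boxes, $R^{-1}(R([I]))\cap X$ lies in the union of $[I]$ with boundedly many neighbouring cylinders; hence the Gibbs property of \autoref{coro:parry} gives
\begin{equation*}
	R\mu(R([I])\cap K)=\mu\big(R^{-1}(R([I]))\cap X\big)\approx\mu([I])\approx\frac{1}{\#\calL_{k}(X)}\approx\haus{\varphi}{\Lambda^{-k}K}\approx\haus{\varphi}{R([I])\cap K},
\end{equation*}
uniformly in $I\in\calL_{k}(X)$ and $k$. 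Thus $R\mu$ and $\Haus{\varphi}|_{K}$ agree up to a fixed multiplicative constant on every cylinder set $R([I])\cap K$. To pass to arbitrary Borel sets I would use that these cylinders form a refining net of $K$ with diameters $\approx n_{s}^{-k}\to0$, and that the set of points of $K$ lying on the boundary of some box is null for both measures: it sits inside countably many coordinate-hyperplane slices of $K$, which are $\Haus{\varphi}$-null (they have Hausdorff dimension $<\dimH K$ while $\haus{\varphi}{K}<\infty$) and $R\mu$-null ($\mu$, being the ergodic measure of maximal entropy, gives no mass to proper subshifts, assuming, as we may, that $K$ is not contained in a coordinate hyperplane). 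Then every relatively open $U\subseteq K$ is, up to a null set, a countable disjoint union of such cylinders, so $R\mu(U)\approx\haus{\varphi}{U}$, and outer regularity of the two finite Radon measures upgrades this to $R\mu\approx\Haus{\varphi}|_{K}$.

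For the dimension equality, $R\mu\approx\Haus{\varphi}|_{K}$ shows the two measures share the same null sets, so $\dimH R\mu=\dimH(\Haus{\varphi}|_{K})$, and $\dimH R\mu=\dLY{\mu}\leq\dimH K$ by \autoref{thm:dimLY-ExistFullDim}. Suppose this inequality were strict, and fix $\gamma\in(\dLY{\mu},\dimH K)$ and then $\gamma'\in(\gamma,\dimB K)$ (possible since $\dimH K\leq\dimB K$). By the local form of the Ledrappier--Young formula underlying \autoref{thm:dimLY-ExistFullDim}, $R\mu(Q_{k}(x))\geq n_{s}^{-k\gamma}$ for all large $k$ and $R\mu$-a.e.\ $x$, so $\haus{\varphi}{Q_{k}(x)\cap K}\gtrsim n_{s}^{-k\gamma}$ there; on the other hand $\haus{\varphi}{K}<\infty$ together with $\gamma'<\dimB K$ forces $\varphi(n_{s}^{-k})\leq n_{s}^{-k\gamma'}$ for infinitely many $k$ (a standard mass-distribution estimate using the box-dimension formula \autoref{prop:BoxDim}). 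Hence $\limsup_{k}\haus{\varphi}{Q_{k}(x)\cap K}/\varphi(n_{s}^{-k})=\infty$ for $\Haus{\varphi}|_{K}$-a.e.\ $x$, and the second part of \autoref{lem:RogersTaylor}, applied with $\nu=\Haus{\varphi}|_{K}$ on a set of full $\Haus{\varphi}|_{K}$-measure, forces $\haus{\varphi}{K}=0$ --- a contradiction. Therefore $\dimH R\mu=\dimH K$.

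The step I expect to be the main obstacle is this last one. Identifying $\dimH(\Haus{\varphi}|_{K})$ with $\dimH K$ is false for a general set carrying a finite positive $\Haus{\varphi}$-measure (one can arrange an oscillating gauge for which positive $\Haus{\varphi}$-mass sits on a subset of strictly smaller Hausdorff dimension), so the argument must genuinely use the self-affine, spatially homogeneous structure of $K$; routing it through the local Ledrappier--Young formula and the Rogers--Taylor density estimate seems the cleanest way to do so. The bookkeeping about box boundaries being null for both measures is routine but should be written out carefully.
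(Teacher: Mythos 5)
Your proof of the comparison $R\mu\approx\Haus{\varphi}|_{K}$ runs along the same lines as the paper's: \autoref{lem:HausCylinder=} plus the Gibbs property of \autoref{coro:parry}, summed over the level-$k$ cylinders, and your bounded-multiplicity bookkeeping for overlapping boxes is a legitimate (slightly more careful) version of what the paper does. The one step of this part that is wrong as written is your passage to general Borel sets: you declare the box boundaries $\Haus{\varphi}$-null on the grounds that coordinate-hyperplane slices of $K$ have Hausdorff dimension $<\dimH K$ while $\haus{\varphi}{K}<\infty$. That inference is invalid for a general gauge --- positive $\Haus{\varphi}$-mass can sit on a subset of strictly smaller dimension, which is precisely the pitfall you yourself point out in your closing remark --- and the side assumption that $K$ is not contained in a coordinate hyperplane is not something you ``may'' assume without an explicit reduction. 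The detour is also unnecessary: since each point lies in at most $2^{d}$ closed level-$k$ boxes, the cylinder comparison passes with constant $2^{d}C$ to finite unions of boxes, then to open sets (write an open $U$ as the increasing union over $k$ of the level-$k$ boxes contained in $U$), and then to all Borel sets by outer regularity of the two finite Borel measures; no boundary-nullity is needed. This is the clean way to read the paper's terse appeal to the generated $\sigma$-algebra.

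For the dimension equality you genuinely depart from the paper, which asserts it without argument; you are right that it is not a formal consequence of $R\mu\approx\Haus{\varphi}|_{K}$, and your route through the pointwise Ledrappier--Young formula and \autoref{lem:RogersTaylor} is sound in outline, but one step is false as stated: $\haus{\varphi}{K}<\infty$ together with $\gamma'<\dimB K$ does not force $\varphi(n_{s}^{-k})\leq n_{s}^{-k\gamma'}$ infinitely often. For $\gamma'\in(\dimH K,\dimB K)$ one has $\haus{\gamma'}{K}=0$, and a gauge with $\varphi(t)\gg t^{\gamma'}$ at every scale is perfectly compatible with $\haus{\varphi}{K}<\infty$; the box-counting formula of \autoref{prop:BoxDim} gives no lower bound on $\haus{\varphi}{K}$ because finiteness may be witnessed by covers much finer than the uniform one. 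The estimate you need is true, but only because you may choose $\gamma'\in(\gamma,\dimH K)$: then $\haus{\gamma'}{K}=\infty$, so if $\varphi(n_{s}^{-k})>n_{s}^{-k\gamma'}$ for all large $k$, monotonicity of $\varphi$ gives $\varphi(t)\gtrsim t^{\gamma'}$ at all small scales and hence $\haus{\varphi}{K}=\infty$, a contradiction; thus $\varphi(n_{s}^{-k})\leq n_{s}^{-k\gamma'}$ along a subsequence, and with $\gamma<\gamma'$ your blow-up of the density ratio and the application of \autoref{lem:RogersTaylor}(ii) go through. Note also that you invoke the almost-everywhere local form of the Kenyon--Peres formula (that the local dimension of $R\mu$ along approximate cubes equals $\dLY{\mu}$ a.e., so that $R\mu(Q_{k}(x))\geq n_{s}^{-k\gamma}$ for all large $k$ a.e.); this is indeed what underlies \autoref{thm:dimLY-ExistFullDim} but is not stated in the paper, so it should be cited explicitly rather than treated as available. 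With these two repairs your argument is correct and in fact supplies a justification of the ``in particular'' that the paper leaves implicit.
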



\begin{proof}
	Let $ k \in \bbN $. By \autoref{lem:HausCylinder=}, 
	\begin{equation}\label{eq:TransInv}
		\haus{\varphi}{R([I])\intxn K } \approx \haus{\varphi}{R([J]) \intxn K} \mFor I, J \in \calL_{k}(X).
	\end{equation}
	Since $ 0 < \haus{\varphi}{K} < \infty $ and $ \{ R([I])\}_{I\in\calL_{k}(X)}$ covers $ K $,
	\begin{equation}\label{eq:Sum-HausPhi}
		1 \approx \Haus{\varphi}|_{K}(K) = \sum_{I \in \calL_{k}(X)} \Haus{\varphi}|_{K}(R([I])).
	\end{equation}
	Combining \eqref{eq:TransInv} and \eqref{eq:Sum-HausPhi} gives
	\begin{equation*}
		\Haus{\varphi}|_{K}(R([I])) \approx \frac{1}{\# \calL_{k}(X)} \quad \text{for } I \in \calL_{k}(X).
	\end{equation*}
	Then \autoref{coro:parry} implies that
	\begin{equation*}
		R\mu(R([I])) = \mu(I) \approx \frac{1}{\# \calL_{k}(X)} \approx \Haus{\varphi}|_{K}(R([I])) \quad \text{for } I \in \calL_{k}(X).
	\end{equation*}
	Note that $ R\mu(R([I])) = 0 = \Haus{\varphi}|_{K}(R([I])) $ for $ I \in \calA^{k}\setminus\calL_{k}(X) $ since $\mu $ supports on $X$. Hence
	\begin{equation*}
		R\mu(R([I])) \approx \Haus{\varphi}|_{K}(R([I])) \quad \text{for all } I \in \calA^{k}.
	\end{equation*}
	This finishes the proof because the collection of sets
		$\left \{ R([I]) \colon I \in \Union_{k=1}^{\infty} \calA^{k} \right \}$
	generates the Borel $ \sigma $-algebra of $ \torus[d] $.
\end{proof}

%

\subsection{Proof of \autoref{thm:DimCoin}\ref{itm:s<=2}}
The proof of \autoref{thm:DimCoin}\ref{itm:s<=2} follows immediately by combining the following proposition with \ref{itm:A=>B} and \ref{itm:B=>C} of \autoref{thm:DimCoin}.

\begin{proposition} \label{lem:ParryFullDim=>DimEq}
	  Let $ X $ be a subshift satisfying weak specification and $ K = R(X)$. Let $\mu$ be the measure of maximal entropy on $ X$.  Suppose $ s \leq 2 $. If $\dimH R\mu = \dimH K $, then $ \dimH K = \dimB K $.
\end{proposition}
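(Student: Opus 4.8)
The plan is to reduce the claim to a Gibbs‑property comparison. The case $s = 1$ requires no argument: then $T$ is conformal and $\dimH K = \dimB K$ automatically, as one reads off from \autoref{thm:dimLY-ExistFullDim} and \autoref{prop:BoxDim}. So assume $s = 2$; then $\calA_{1} = \calA$, the map $\tau_{1}$ is the identity, $X_{1} = X$, $\tau_{2} = \pi_{2}$, and $\alpha_{2} = \log n_{2}/\log n_{1} < 1$ since $n_{2} < n_{1}$. First I would observe that the hypothesis $\dimH R\mu = \dimH K$ forces $\mu$ to be the measure of full dimension: by \autoref{thm:dimLY-ExistFullDim} one has $\dimH R\mu = \dLY{\mu}$ and $\dimH K = \max_{\nu}\dLY{\nu}$, so $\mu$ attains the maximum, and by the uniqueness in \autoref{prop:FullDimMeas}, $\mu = \eta$. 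Consequently $\mu$ \emph{simultaneously} satisfies the Parry Gibbs estimate of \autoref{coro:parry}, namely $\mu(I) \approx \exp(-\abs{I}\,h(X))$ for $I \in \calL(X)$, and — being the measure of full dimension — the Gibbs property \eqref{eq:GibbsProjMeas} of its projection $\tau_{2}\mu = \tau_{2}\eta$.

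The core of the proof is to play these two facts against each other. Fix $k \in \bbN$ and, for $J \in \calL_{k}(X_{2})$, set $N_{J} = \#\{I \in \calL_{k}(X) \colon \tau_{2}(I) = J\}$. Since $\phi^{(1)} \equiv 1$, the recursion \eqref{eq:def-recur-phi} gives $\phi^{(2)}(J) = N_{J}$, so \eqref{eq:GibbsProjMeas} becomes $\tau_{2}\mu(J) \approx N_{J}^{\alpha_{2}}\exp(-kP)$. On the other hand, decomposing $\tau_{2}^{-1}([J])\cap X$ into the level‑$k$ cylinders it contains and applying the Parry estimate to each gives $\tau_{2}\mu(J) = \sum_{I \colon \tau_{2}(I) = J}\mu(I) \approx N_{J}\exp(-k\,h(X))$, with constants uniform in $J$ and $k$. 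Equating the two expressions yields
\begin{equation*}
	N_{J} \approx \exp\!\left(\frac{k\,(h(X) - P)}{1-\alpha_{2}}\right) \qquad \text{uniformly over } J \in \calL_{k}(X_{2}),\ k \in \bbN ,
\end{equation*}
so all fibres over level‑$k$ words of $X_{2}$ have comparable cardinality. Summing over $J \in \calL_{k}(X_{2})$ and using $\calL_{k}(X_{1}) = \calL_{k}(X)$ gives $\#\calL_{k}(X) \approx \#\calL_{k}(X_{2})\exp\!\bigl(k(h(X)-P)/(1-\alpha_{2})\bigr)$; taking $\tfrac1k\log$ and letting $k \to \infty$ (recall \eqref{eq:Def-TopEntropy}) produces $h(X) = h(X_{2}) + (h(X)-P)/(1-\alpha_{2})$, equivalently
\begin{equation*}
	P = \alpha_{2}\,h(X) + (1-\alpha_{2})\,h(X_{2}).
\end{equation*}

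To finish, I would match this value of $P$ against the dimension formulas already available. By \autoref{prop:HausDim}, $\dimH K = P/\log n_{2}$; by \autoref{prop:BoxDim}, $\dimB K = h(X)/\log n_{1} + (1/\log n_{2} - 1/\log n_{1})\,h(X_{2})$, so multiplying by $\log n_{2}$ and using $\alpha_{2} = \log n_{2}/\log n_{1}$ gives $(\log n_{2})\,\dimB K = \alpha_{2}\,h(X) + (1-\alpha_{2})\,h(X_{2}) = P$. Hence $\dimH K = P/\log n_{2} = \dimB K$, as wanted. (Equivalently, the computed $P$ turns the fibre estimate into $N_{J} \approx \exp\bigl(k(h(X)-h(X_{2}))\bigr)$, which is exactly condition \ref{itm:UniFiber-Entropy} of \autoref{thm:Ext-KP-DimCoin}; that theorem then delivers \ref{itm:DimEq} directly.)

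The step I expect to be the real obstacle is not any of the computations but the structural identity $\phi^{(2)}(J) = N_{J}$ and the ensuing rigidity of fibre sizes. This is precisely what collapses for $s \ge 3$: there the weight $\phi^{(s)}$ appearing in \eqref{eq:GibbsProjMeas} is a genuinely weighted fibre sum, built recursively from the intermediate $\phi^{(i)}$ with $2 \le i \le s-1$, rather than a plain cardinality, so equating the two expressions for $\tau_{s}\mu$ no longer constrains the fibres — in line with the counterexamples recorded in \ref{itm:s>=3}. A secondary point requiring care is that every ``$\approx$'' above must carry constants independent of both $J$ and the level $k$, since that is what legitimises the summation‑and‑limit step; this is guaranteed by the precise form of the Gibbs estimates in \autoref{coro:parry} and \autoref{prop:FullDimMeas}.
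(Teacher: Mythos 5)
Your proposal is correct and follows essentially the same route as the paper: you use that the hypothesis forces $\mu$ to coincide with the measure of full dimension, play the Parry Gibbs estimate against the Gibbs property \eqref{eq:GibbsProjMeas} of $\tau_{2}\mu$ to get uniformly comparable fibre cardinalities, and solve for $P = \alpha_{2}h(X) + (1-\alpha_{2})h(X_{2})$, matching \autoref{prop:HausDim} against \autoref{prop:BoxDim}. The only cosmetic difference is that the paper extracts $P$ by substituting the fibre estimate into the defining sum \eqref{eq:s2-pressure}, whereas you sum the fibres to count $\#\calL_{k}(X)$ and use the entropy definitions; the two computations are equivalent.
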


\begin{proof} 
	By the definition of $ \mu $ and the assumption on $ \mu $,
	\begin{equation*}
		h(\mu) = h(X) \quad \text{ and } \quad \dimH R\mu = \dimH K.
	\end{equation*}

	When $ s = 1 $. By \autoref{prop:FullDimMeas} and \autoref{prop:BoxDim},
	\begin{equation*}
		\dimH K = \dimH R\mu = \dLY{\mu} = \frac{h(\mu)}{\log n_{1}} = \frac{h(X)}{\log n_{1}} = \dimB K.
	\end{equation*} 
	(This is also the result of Furstenberg \cite{Furstenberg1967} since $ T $ is conformal when $ s = 1 $.)

	When $ s = 2 $. Recall $ \theta_{1} = \alpha_{2} \theta_{2} = \alpha_{2} $ from \eqref{eq:def-alpha} and \eqref{eq:def-theta}. By \autoref{prop:HausDim},
	\begin{equation}\label{eq:HausDim-InPf}
		\dimH K = \frac{P}{\log n_{2} }
	\end{equation}
	where
	\begin{equation}\label{eq:s2-pressure}
		P = \lim_{k\to\infty} \frac{1}{k} \log \sum_{J \in \calL_{k}(X_{2})} (\# \pi_{2}^{-1}(J))^{\theta_{1}}.
	\end{equation}
	Let $k \in\bbN$ and $ J \in \calL_{k}(X_{2})$. Since $ \tau_{2} = \pi_{2} $, \autoref{prop:FullDimMeas} implies that
	\begin{equation}\label{eq:s2-Gibbs}
		\pi_{2}\mu(J) \approx \frac{(\# \pi_{2}^{-1}(J))^{\theta_{1}}}{\exp(kP)}.
	\end{equation}
	Since $ \mu(I) \approx \exp(-k h(X_{1})) $ for $ I \in \calL_{k}(X_{1})$ by \autoref{coro:parry},
	\begin{equation}\label{eq:s2-MaxEntropy}
		\pi_{2}\mu(J) = \sum_{I \in \pi_{2}^{-1} (J)} \mu(I) \approx  \frac{\# \pi_{2}^{-1}(J)}{ \exp(kh(X_{1}))}.
	\end{equation}
	Combining \eqref{eq:s2-Gibbs} and \eqref{eq:s2-MaxEntropy} gives
	\begin{equation}\label{eq:Fiber-Reg-Pi2}
		(\# \pi_{2}^{-1}(J))^{\theta_{1}} \approx \exp\left (k \frac{\theta_{1}}{1 - \theta_{1} } (h(X_{1}) - P)\right ).
	\end{equation}
	By \autoref{coro:parry},
	\begin{equation} \label{eq:NumWords-X2}
		\#\calL_{k}(X_{2}) \approx \exp(kh(X_{2})).
	\end{equation}
	Applying \eqref{eq:Fiber-Reg-Pi2} and \eqref{eq:NumWords-X2} to \eqref{eq:s2-pressure}, we solve that
	\begin{equation*}
		P = \theta_{1} h(X_{1}) + (1-\theta_{1}) h(X_{2}).
	\end{equation*}
	This shows $ \dimH K = \dimB K $ by \autoref{prop:BoxDim} and \eqref{eq:HausDim-InPf}.
\end{proof}

In the above proof, the assumption of $ s \leq 2 $ allows the potential $ \phi^{(2)} $ as in \eqref{eq:def-recur-phi} to be estimated by combining the Gibbs property of $ \mu $ and $\tau_{2} \mu $. However, this argument does not extend to $ s \geq 3 $ since for $ 2 \leq i \leq s - 1 $ there is a lack of the information about the regularity of $ \tau_{i} \mu $. In fact, the examples in \autoref{subsec:examples} show that \autoref{lem:ParryFullDim=>DimEq} does not hold when $ s \geq 3 $.

\section{Bedford-McMullen sponges} \label{sec:SierSponge}

This section is devoted to the results about \BMsponge[s]. The proof of \autoref{thm:SierSponge} is given in \autoref{subsec:Pf-BM}.

\subsection{Dimensional results}

In this subsection, we include some results of Kenyon and Peres about the dimensions of Bedford-McMullen sponges. Let $\calD$ be a nonempty subset of $ \calA$. Define $ \calD_{i} = \tau_{i} (\calD) $ for $ 1\leq i \leq s $. Define $ Z^{(1)} \equiv 1 $ on $\calD_{1} $. For $ 2 \leq i \leq s $, recursively define
\begin{equation}\label{eq:def-recur-Zi}
	Z^{(i)}(x) = \sum_{y\in \calD_{i-1} \colon \pi_{i}(y) = x } Z^{(i-1)}(y)^{\alpha_{i-1}} \mFor x \in \calD_{i}.
\end{equation}
Define
\begin{equation}\label{eq:def-Z-KenyonPeres}
	Z = \sum_{x \in \calD_{s}} Z^{(s)}(x)^{\alpha_{s}}.
\end{equation}


\begin{proposition}[{\cite[Theorem 1.2]{KenyonPeres1996a}}]\label{prop:Sier-HausDim}
	Let $ K = R(\calD^{\bbN}) $ for some $ \emptyset \neq \calD \subset \calA $. Then
	\begin{equation*}
		\dimH K = \frac{\log Z}{\log n_{s}}
	\end{equation*}
	where $ Z $ is as in \eqref{eq:def-Z-KenyonPeres}. There is a unique measure of full dimension $\eta_{p} $ which is the Bernoulli measure with marginal $ p = p(x)_{x\in\calD} $, where
	\begin{equation}\label{eq:McMullen-vec}
		p(x) = \frac{1}{Z} \prod_{i = 1}^{s} Z^{(i)}(\tau_{i}(x))^{\alpha_{i}-1} \mFor x \in \calD.
	\end{equation}
\end{proposition}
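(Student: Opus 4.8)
The plan is to obtain this as a specialization of the general results \autoref{prop:FullDimMeas} and \autoref{prop:HausDim} to the case $X=\calD^{\bbN}$, a full shift, which in particular satisfies weak specification (take the connecting word $W=\varnothing$). Then $X_{i}=\calD_{i}^{\bbN}$, and the point is that all the recursively defined quantities decouple across coordinates. Concretely, I would first prove by induction on $i$ that
\[
	\phi^{(i)}(J_{1}\cdots J_{k}) = \prod_{\ell=1}^{k} Z^{(i)}(J_{\ell}) \mForall J_{1}\cdots J_{k}\in\calL_{k}(X_{i})=\calD_{i}^{k},
\]
where $Z^{(i)}\colon\calD_{i}\to(0,\infty)$ is the single-letter weight from \eqref{eq:def-recur-Zi}: the base case is $\phi^{(1)}\equiv Z^{(1)}\equiv 1$, and the inductive step follows from \eqref{eq:def-recur-phi} because over a full shift the fibre $\{I\in\calL_{k}(X_{i-1})\colon\pi_{i}(I)=J\}$ is the product of the per-letter fibres, so the defining sum factorizes. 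Taking $i=s$ in the definition of $Z(k)$ then gives $Z(k)=\big(\sum_{J\in\calD_{s}}Z^{(s)}(J)^{\alpha_{s}}\big)^{k}=Z^{k}$, hence $P=\log Z$ by \eqref{eq:def-WTopPressure}, and \autoref{prop:HausDim} yields $\dimH K=\log Z/\log n_{s}$.

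For the measure of full dimension, the same factorization shows that the lower Gibbs density $\psi$ of \autoref{prop:FullDimMeas} becomes, for $I=i_{1}\cdots i_{k}\in\calD^{k}$,
\[
	\psi(I) = \frac{1}{Z^{k}}\prod_{i=1}^{s}\prod_{\ell=1}^{k} Z^{(i)}(\tau_{i}(i_{\ell}))^{\alpha_{i}-1} = \prod_{\ell=1}^{k} p(i_{\ell}),
\]
with $p$ the vector in \eqref{eq:McMullen-vec}; that is, $\psi(I)=\eta_{p}([I])$ where $\eta_{p}$ is the Bernoulli measure on $X$ with marginal $p$. For this to be meaningful one must verify that $p$ is a probability vector, i.e.\ $\sum_{x\in\calD}\prod_{i=1}^{s}Z^{(i)}(\tau_{i}(x))^{\alpha_{i}-1}=Z$. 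I would prove this by a telescoping argument: in this sum the $i=1$ factor equals $1$ since $Z^{(1)}\equiv 1$, and one peels off the projections $\pi_{2},\pi_{3},\dots,\pi_{s}$ one at a time, using at the $j$-th stage that the remaining product depends on $y\in\calD_{j}$ only through $\pi_{j+1}(y)$ and that $\sum_{y\colon\pi_{j+1}(y)=z}Z^{(j)}(y)^{\alpha_{j}}=Z^{(j+1)}(z)$ by \eqref{eq:def-recur-Zi}; after $s$ steps one is left with $\sum_{w\in\calD_{s}}Z^{(s)}(w)^{\alpha_{s}}=Z$.

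It then remains to identify the unique measure of full dimension $\eta$ from \autoref{prop:FullDimMeas} with $\eta_{p}$. By the lower Gibbs property \eqref{eq:HalfGibbs} there is a constant $c>0$, independent of $I$, with $\eta([I])\geq c\,\eta_{p}([I])$ for every cylinder $I$; since cylinders generate the Borel $\sigma$-algebra this upgrades to $\eta\geq c\,\eta_{p}$ as measures, and evaluating at $X$ forces $c\leq 1$. Writing $\eta=c\,\eta_{p}+(1-c)\nu$, the measure $\nu$ is again a shift-invariant probability measure (as $\sigma\eta=\eta$ and $\sigma\eta_{p}=\eta_{p}$), so $\eta$ is a convex combination of shift-invariant probability measures; since $\eta$ is ergodic, hence an extreme point of that simplex, we must have $\eta=\eta_{p}$, and \autoref{prop:FullDimMeas} then says $\eta_{p}$ is the unique measure of full dimension. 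The conceptual heart of the argument is the coordinate-wise factorization of the first paragraph, which simultaneously collapses the weighted pressure to $\log Z$ and turns the Gibbs density into a Bernoulli product; the two places needing a little care are the telescoping identity $\sum_{x\in\calD}p(x)=1$ and the ergodicity rigidity step that pins down $\eta=\eta_{p}$ from a one-sided Gibbs bound.
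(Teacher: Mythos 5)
Your proposal is correct, but it is worth noting that the paper itself offers no proof of this statement: it is quoted verbatim from Kenyon--Peres \cite{KenyonPeres1996a}, whose original argument works directly with the Ledrappier--Young type formula (\autoref{thm:dimLY-ExistFullDim}) and an explicit optimization over invariant (essentially Bernoulli) measures. What you do instead is specialize the general weighted-thermodynamic machinery already quoted in the paper --- \autoref{prop:FullDimMeas} and \autoref{prop:HausDim} --- to the full shift $X=\calD^{\bbN}$ (which trivially has weak specification), and all the steps check out: the inductive factorization $\phi^{(i)}(J_{1}\cdots J_{k})=\prod_{\ell}Z^{(i)}(J_{\ell})$ is valid because the fibres of $\pi_{i}$ over a full shift are products of per-letter fibres, it gives $Z(k)=Z^{k}$ hence $P=\log Z$ and the dimension formula, and the same factorization turns $\psi$ into the Bernoulli product $\prod_{\ell}p(i_{\ell})$; your telescoping verification that $\sum_{x\in\calD}p(x)=1$ is exactly right, peeling off one projection at a time via \eqref{eq:def-recur-Zi}. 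Two small points deserve slightly more care than your one-line justifications, though neither is a gap: upgrading $\eta([I])\geq c\,\eta_{p}([I])$ from cylinders to all Borel sets does not follow from generation of the $\sigma$-algebra alone, but it does follow here because every set in the algebra generated by cylinders is a finite disjoint union of cylinders and the family $\{B\colon \eta(B)\geq c\,\eta_{p}(B)\}$ is a monotone class (or: every open subset of $\calD^{\bbN}$ is a countable disjoint union of cylinders, then use regularity); and in the convex-combination step one should separate the degenerate case $c=1$, where $\eta=\eta_{p}$ is immediate, from $0<c<1$, where ergodicity of $\eta$ as an extreme point of the simplex of invariant measures (or, alternatively, mutual singularity of distinct ergodic measures together with $\eta_{p}\ll\eta$) forces $\eta=\eta_{p}$. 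Compared with the original Kenyon--Peres proof, your route is shorter given the quoted general results but leans on the heavier subshift-level uniqueness and Gibbs machinery of \cite{Feng2011}, whereas the original argument is self-contained for full shifts and does not need weak specification at all.
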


There is a simple criterion for \BMsponge[s] to have equal Hausdorff and box dimensions. For $ 1 \leq i \leq s $, define
\begin{equation}\label{eq:Def-Fi}
	f_{i}(x) = \ffi{x} \mFor x \in \calD.
\end{equation}
\begin{proposition}[{\cite[Proposition 1.3]{KenyonPeres1996a}}]\label{prop:DimCoin-Sier} Let $ K = R(\calD^{\bbN}) $ for some $\emptyset \neq \calD \subset \calA $. Then $\dimH K = \dimB K$ if and only if
	\begin{equation}\label{eq:UniformFiber}
		f_{i}(x) = f_{i}(y) \quad \text{ for all } 1 \leq i \leq s \text{ and } x, y\in \calD.
	\end{equation}
\end{proposition}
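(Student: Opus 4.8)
The plan is to read off this criterion from the variational description of the Hausdorff dimension together with the uniqueness of the measure of maximal entropy on a full shift. Put $c_i = \frac{1}{\log n_i} - \frac{1}{\log n_{i-1}}$; since $n_0 = \infty$ and $n_1 > \cdots > n_s \ge 2$, all $c_i > 0$. The full shift $X = \calD^{\bbN}$ satisfies weak specification and $X_i = \calD_i^{\bbN}$, so \autoref{prop:BoxDim} gives $\dimB K = \sum_{i=1}^s c_i\, h(X_i)$ with $h(X_i) = \log\#\calD_i$. For every $\mu \in \calM_\sigma(\calD^{\bbN})$ one has $\dLY{\mu} = \sum_i c_i\, h(\tau_i\mu) \le \sum_i c_i\, h(X_i) = \dimB K$ because $h(\tau_i\mu) \le h(X_i)$ and $c_i > 0$; and $\dimH K = \max_\mu \dLY{\mu}$ (\autoref{thm:dimLY-ExistFullDim}) is attained at the ergodic measure of full dimension $\eta$ (\autoref{prop:Sier-HausDim}). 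Hence $\dimH K \le \dimB K$ always, and $\dimH K = \dimB K$ holds precisely when $h(\tau_i\eta) = h(X_i)$ for every $i$; the strict positivity of all $c_i$ is exactly what promotes one equality of sums into $s$ separate equalities. Since $\calD_i^{\bbN}$ is a full shift, $h(\tau_i\eta) = h(X_i)$ means $\tau_i\eta$ equals the uniform Bernoulli measure $u_{\calD_i}$ (\autoref{coro:parry}). So it suffices to prove that $\tau_i\eta = u_{\calD_i}$ for every $i$ if and only if \eqref{eq:UniformFiber} holds.

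For the implication \eqref{eq:UniformFiber} $\Rightarrow$ $\dimH K = \dimB K$ I would argue directly with the uniform Bernoulli measure $u_\calD$ on $\calD^{\bbN}$. If each $\pi_i \colon \calD_{i-1} \to \calD_i$ ($2 \le i \le s$) has fibres of equal cardinality, then so does the composite $\tau_i = \pi_i \circ \cdots \circ \pi_1$ restricted to $\calD$ (a composition of uniform fibrations, with $\pi_1$ the identity), whence $\tau_i u_\calD = u_{\calD_i}$ and $h(\tau_i u_\calD) = h(X_i)$ for all $i$. Then $\dLY{u_\calD} = \sum_i c_i\, h(X_i) = \dimB K$, and the variational formula \eqref{eq:ExistFullDim} gives $\dimB K = \dLY{u_\calD} \le \dimH K \le \dimB K$.

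For the converse, assume $\dimH K = \dimB K$ and take $\eta$ an ergodic measure of full dimension. The squeeze above forces $h(\tau_i\eta) = h(X_i)$, hence $\tau_i\eta = u_{\calD_i}$, for every $1 \le i \le s$. Fix $2 \le i \le s$; using $\tau_i = \pi_i \circ \tau_{i-1}$ one gets $u_{\calD_i} = \tau_i\eta = \pi_i(\tau_{i-1}\eta) = \pi_i(u_{\calD_{i-1}})$. The last measure is Bernoulli with marginal assigning to $z \in \calD_i$ the mass $\#\{w \in \calD_{i-1} \colon \pi_i(w) = z\}/\#\calD_{i-1}$, so equating it with the uniform marginal on $\calD_i$ forces $\#\pi_i^{-1}(z)$ to be constant in $z \in \calD_i$. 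As $\calD_i = \tau_i(\calD)$, this says exactly that $f_i(x) = \#\pi_i^{-1}(\tau_i(x))$ is independent of $x \in \calD$; and $f_1 \equiv 1$ trivially since $\pi_1$ is the identity. This is \eqref{eq:UniformFiber}.

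The argument is mostly formal once the cited dimension formulas are in hand; the two points that need attention are (i) the strict positivity of each $c_i$, which is what makes $\dimH K = \dimB K$ force $h(\tau_i\eta) = h(X_i)$ level by level rather than only in a weighted average, and (ii) the use of the factorization $\tau_i = \pi_i \circ \tau_{i-1}$ at two consecutive levels, which is what upgrades ``all $\tau_i\eta$ uniform'' to ``all single-step maps $\pi_i$ have uniform fibres'', the content of \eqref{eq:UniformFiber}. A more computational variant stays entirely on the symbolic side: from the recursion \eqref{eq:def-recur-Zi} one proves by induction that $\sum_{x \in \calD \colon \tau_j(x) = z} \prod_{i<j} Z^{(i)}(\tau_i(x))^{\alpha_i - 1} = Z^{(j)}(z)$, so that the marginal of $\tau_j\eta_p$ at $z$ is proportional to $Z^{(j)}(z)^{\alpha_j} \prod_{i>j} Z^{(i)}(\cdot)^{\alpha_i-1}$; uniformity of all $\tau_j\eta_p$ is then equivalent to each $Z^{(i)}$ being constant on $\calD_i$, equivalently, via \eqref{eq:def-recur-Zi} once more, to each $f_i$ being constant on $\calD$.
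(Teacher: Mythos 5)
Your argument is correct, and it is worth noting that the paper itself does not prove this proposition at all: it is quoted from Kenyon--Peres, so there is no in-paper proof to compare against. What you have done is derive the quoted result from the paper's own machinery, and in fact your main route is essentially the full-shift specialisation of the paper's proof of (a)$\iff$(b) in \autoref{thm:Ext-KP-DimCoin}: the strict positivity of the weights $c_i=\frac{1}{\log n_i}-\frac{1}{\log n_{i-1}}$ turns the single identity $\dLY{\eta}=\sum_i c_i h(X_i)$ into the termwise equalities $h(\tau_i\eta)=h(X_i)$, and uniqueness of the measure of maximal entropy (\autoref{coro:parry}) identifies each $\tau_i\eta$ with the uniform Bernoulli measure; your added step, which the subshift-level theorem does not contain, is the elementary observation that for Bernoulli measures ``all pushforwards $\tau_i\eta$ uniform'' is equivalent to the single-step fibre counts $\#\bigl(\pi_i^{-1}(z)\cap\calD_{i-1}\bigr)$ being constant, via the factorisation $\tau_i=\pi_i\circ\tau_{i-1}$ applied at consecutive levels. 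Both directions check out: the composite-fibre count in the forward direction and the marginal computation $\pi_i(u_{\calD_{i-1}})\{z\}=\#\{w\in\calD_{i-1}:\pi_i(w)=z\}/\#\calD_{i-1}$ in the converse are correct, as is your closing ``computational variant'' (the identity $\sum_{x\in\calD:\,\tau_j(x)=z}\prod_{i<j}Z^{(i)}(\tau_i(x))^{\alpha_i-1}=Z^{(j)}(z)$ does follow by the induction you indicate, and it is this algebraic route that is closest in spirit to Kenyon--Peres' original argument with the recursion \eqref{eq:def-recur-Zi}). The only point to make explicit is one you already handle implicitly: $f_i(x)=\#\pi_i^{-1}(\tau_i(x))$ must be read as the cardinality of the fibre inside $\calD_{i-1}$ (not inside $\calA_{i-1}$), consistent with \eqref{eq:def-recur-Zi}; with that reading your proof is complete.
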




As a direct corollary of \autoref{prop:Sier-HausDim}, we have the following condition on when the measures of maximal entropy and full dimension coincide.

\begin{corollary}\label{prop:FullShift-MaxFull}
	Let $ K = R(\calD^{\bbN}) $ for some $ \emptyset \neq  \calD \subset \calA $. Then the measures of maximal entropy and full dimension coincide if and only if
	\begin{equation}\label{eq:Match-KP}
		\prod_{i = 1}^{s} Z^{(i)}(\tau_{i}(x))^{\alpha_{i}-1} = \prod_{i = 1}^{s} Z^{(i)}(\tau_{i}(y))^{\alpha_{i}-1} \mFor x, y \in \calD.
	\end{equation}
	When $ s \leq 3 $, \eqref{eq:Match-KP} is equivalent to
	\begin{equation}\label{eq:MaxFull-FullShift}
		\ff{x} = \ff{y} \mFor x, y \in \calD.
	\end{equation}
\end{corollary}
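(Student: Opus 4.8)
The plan is first to reduce the coincidence of the two measures to the equality of their Bernoulli marginals, and then, for $s\le3$, to deduce the equivalence with \eqref{eq:MaxFull-FullShift} by an explicit computation with the exponents $\alpha_i$ and $\theta_i$. For the first (general) part, note that $X=\calD^{\bbN}$ is a full shift on the alphabet $\calD$, hence satisfies weak specification, $h(X)=\log\#\calD$, and the Bernoulli measure with uniform marginal $x\mapsto1/\#\calD$ satisfies $\mu(I)=(\#\calD)^{-\abs{I}}$, i.e.\ the Gibbs property of \autoref{coro:parry}; by the uniqueness there it is the measure of maximal entropy. By \autoref{prop:Sier-HausDim} (together with the uniqueness in \autoref{prop:FullDimMeas}) the measure of full dimension is the Bernoulli measure $\eta_p$ with marginal $p$ as in \eqref{eq:McMullen-vec}. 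Two Bernoulli measures agree iff their marginals do, so the two coincide iff $p\equiv1/\#\calD$, i.e.\ iff $x\mapsto\prod_{i=1}^{s}Z^{(i)}(\tau_i(x))^{\alpha_i-1}$ is constant on $\calD$ (the constant $Z$ and the trivial factor $Z^{(1)}\equiv1$ play no role), which is exactly \eqref{eq:Match-KP}.

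For $s\le3$ we must compare \eqref{eq:Match-KP} with \eqref{eq:MaxFull-FullShift}. When $s=1$ both hold trivially ($Z^{(1)}\equiv1$, $f_1\equiv1$). When $s=2$, using $Z^{(1)}\equiv1$, $f_1\equiv1$, $\theta_0=0$ and $\theta_1=\alpha_2$ one computes $\prod_iZ^{(i)}(\tau_i(x))^{\alpha_i-1}=f_2(x)^{\alpha_2-1}=\ff{x}$, so the two conditions are literally identical. The substance is $s=3$, where the facts to assemble are: $Z^{(2)}(\tau_2(x))=f_2(x)$ (from $Z^{(1)}\equiv1$ and \eqref{eq:def-recur-Zi}); the recursion $Z^{(3)}(\tau_3(x))=\sum_{w\in\calD_2,\ \pi_3(w)=\tau_3(x)}Z^{(2)}(w)^{\alpha_2}$; the elementary relations $\theta_0=0$, $\theta_1=\alpha_2\alpha_3$, $\theta_2=\alpha_3$, $f_1\equiv1$, so that $\ff{x}=f_2(x)^{\alpha_2\alpha_3-1}f_3(x)^{\alpha_3-1}$; and the fact that $\alpha_2-1$, $\alpha_3-1$, $\alpha_2\alpha_3-1$ are all nonzero since $0<\alpha_2,\alpha_3<1$.

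The crux, needed for both implications, is a rigidity step: if either \eqref{eq:Match-KP} or \eqref{eq:MaxFull-FullShift} holds, then $f_2$ is constant on every fibre $\{x\in\calD:\tau_3(x)=v\}$. Indeed, fixing $\tau_3(x)=v$ freezes $Z^{(3)}(\tau_3(x))$ and $f_3(x)$; then \eqref{eq:Match-KP}, which reads ``$f_2(x)^{\alpha_2-1}Z^{(3)}(\tau_3(x))^{\alpha_3-1}$ is constant'', forces $f_2(x)^{\alpha_2-1}$ to be constant on the fibre, while \eqref{eq:MaxFull-FullShift}, which reads ``$f_2(x)^{\alpha_2\alpha_3-1}f_3(x)^{\alpha_3-1}$ is constant'', forces $f_2(x)^{\alpha_2\alpha_3-1}$ to be constant on the fibre; in either case $f_2$ is constant on the fibre since the exponent is nonzero, and as $\calD_2=\tau_2(\calD)$ this means $Z^{(2)}(w)$ takes a single value $c^{(v)}$ over all columns $w\in\calD_2$ with $\pi_3(w)=v$. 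Writing $b_v$ for the constant value of $f_3$ there, the recursion gives $Z^{(3)}(\tau_3(x))=b_v(c^{(v)})^{\alpha_2}=f_3(x)f_2(x)^{\alpha_2}$, so
\[
Z\,p(x)=f_2(x)^{\alpha_2-1}\bigl(f_3(x)f_2(x)^{\alpha_2}\bigr)^{\alpha_3-1}=f_2(x)^{\alpha_2\alpha_3-1}f_3(x)^{\alpha_3-1}=\ff{x}\qquad(x\in\calD).
\]
Thus, \emph{under the rigidity condition}, ``$p$ constant'' and ``$\ff{x}$ constant'' are the same statement; combined with the equivalences \eqref{eq:Match-KP}$\iff$($p$ constant) and \eqref{eq:MaxFull-FullShift}$\iff$($\ff{x}$ constant), this closes both directions. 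I expect the main obstacle to be precisely isolating and proving the rigidity step with the right bookkeeping of which quantities are constant along fibres of $\tau_2$ versus $\tau_3$; once that is settled the exponent algebra above is routine, and it is the breakdown of such rigidity for $s\ge4$ (where the intermediate factors $Z^{(i)}$, $2\le i\le s-1$, are not pinned down by the two Gibbs-type conditions) that explains why the reduction to \eqref{eq:MaxFull-FullShift} is stated only for $s\le3$.
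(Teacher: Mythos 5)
Your proposal is correct and follows essentially the same route as the paper: the first equivalence via identifying both measures as Bernoulli measures (uniform marginal vs.\ \eqref{eq:McMullen-vec}), and for $s=3$ the observation that either condition forces $f_{2}$ to be constant on each fibre of $\tau_{3}$, whence $Z^{(3)}(\tau_{3}(x))=f_{3}(x)f_{2}(x)^{\alpha_{2}}$ and $Z\,p(x)=\ff{x}$. Your explicit remark that the exponents $\alpha_{2}-1$, $\theta_{1}-1$ are nonzero is a detail the paper leaves implicit, but the argument is the same.
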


\begin{proof}
	Since the measure of maximal entropy on $ \calD^{\bbN}$ is the Bernoulli measure with marginal $(1/\#\calD, \ldots, 1/\#\calD)$, the first equivalence follows directly from \autoref{prop:Sier-HausDim}. Next we show the second equivalence.
	Since $ f_{1}(x) = Z^{(1)}(x) = 1 $ and $ f_{2}(x) = Z^{(2)}(\tau_{2}(x)) $ for $ x\in \calD$, \eqref{eq:MaxFull-FullShift} and \eqref{eq:Match-KP} are equivalent when $ s \leq 2 $. Next we suppose $ s = 3 $. Let $ x \in \calD$. For $ z  \in \pi_{3}^{-1}(\tau_{3}(x)) $, there exists $ y \in \calD $ such that $ \tau_{2}(y) = z $, and so $ \tau_{3}(y) = \pi_{3}(\tau_{2}(y)) = \pi_{3}(z) = \tau_{3}(x)$. Then either \eqref{eq:MaxFull-FullShift} or \eqref{eq:Match-KP} implies $ f_{2}(x) = f_{2}(y) $, thus $ Z^{(2)}(z) = Z^{(2)}(\tau_{2}(y)) = f_{2}(y) = f_{2}(x)$. Hence by \eqref{eq:def-recur-Zi},
	\begin{equation*}
		Z^{(3)}(\tau_{3}(x)) = \sum_{z \in \pi_{3}^{-1}(\tau_{3}(x))} Z^{(2)}(z)^{\alpha_{2}} = f_{3}(x) f_{2}(x)^{\alpha_{2}}.
	\end{equation*}
	Then
	\begin{equation*}
		\prod_{i = 1}^{3} Z^{(i)}(\tau_{i}(x))^{\alpha_{i}-1} = f_{2}(x)^{\alpha_{2}-1} \left (f_{3}(x) f_{2}(x)^{\alpha_{2}} \right )^{\alpha_{3}-1} = f_{2}(x)^{\theta_{1}-1}f_{3}(x)^{\theta_{2}-1},
	\end{equation*}
	which finishes the proof.
\end{proof}

It is easy to see that \eqref{eq:UniformFiber} is equivalent to \eqref{eq:MaxFull-FullShift} if $ s \leq 2 $, but stronger than \eqref{eq:MaxFull-FullShift} if $ s = 3 $. This motivates us to give the examples in \autoref{subsec:examples}.



\subsection{Proof of \autoref{thm:SierSponge}} \label{subsec:Pf-BM} Let $ X $ be a subshift and $\mu$ be a measure on $X$. For a gauge function $\varphi$, define
\begin{equation*}
	\Theta_{k}^{\varphi}(\mu, x) = \frac{R\mu(Q_{k}(x))}{\varphi(n_{s}^{-k})} \mFor x \in X \text{ and } k\in \bbN.
\end{equation*}
Write $ \Theta_{k}^{\varphi} $ as $ \Theta_{k}^{\gamma} $ when $\varphi(r) = r^{\gamma}$, $ \gamma \geq 0 $. Let $ E[X] $ and $ \Var(X)$ respectively denote the expectation and variance of a random variable $ X $.

\begin{proof}[Proof of \autoref{thm:SierSponge}]
	By \autoref{thm:DimCoin}\ref{itm:A=>B}, it remains to show that if $ 0  < \Haus{\varphi}(K) < \infty $ for some gauge function $ \varphi $, then $ \dimH K = \dimB K $. 
	
	Suppose $ 0  < \Haus{\varphi}(K) < \infty $. Write $ K = R(\calD^{\bbN})$ for some $ \calD \subset \calA $. Let $ \mu $ be the measure of maximal entropy on $ \calD^{\bbN} $, that is, $ \mu$ is the Bernoulli measure with marginal $ ( 1/ \# \calD, \ldots, 1/\# \calD)$. 
	Let $ x = (x_{j})_{j=1}^{\infty} \in \calD^{\bbN}$ be a random sequence with law $\mu$. For $ k \in \bbN $ and $ 1 \leq j \leq k $, define the random variables
	\begin{equation}\label{eq:def-Xj}
		X_{j}^{(k)} = \log \# \tau_{i}^{-1}(\tau_{i}(x_{j})) \mFor 1 \leq i \leq s \text{ and } \tk[i-1] + 1 \leq j \leq \tk[i].
	\end{equation}
	Note that $ x_{1}, \ldots, x_{k}$ are i.i.d.\ random variables with common law $ \sum_{y\in\calD} 1/(\#\calD) \delta_{y} $. Then $ X_{1}^{(k)}, \ldots, X_{k}^{(k)} $ are independent and uniformly bounded.  Write $ E_{k} = \sum_{j=1}^{k} E[ X_{j}^{(k)}]$ and $ V_{k}^{2} = \sum_{j=1}^{k} \Var( X_{j}^{(k)}) $. For $ k \in \bbN $, define
	\begin{equation}\label{eq:def-L-k}
		L(k) = \log \left( (\#\calD)^{k} \varphi(n_{s}^{-k}) \right).
	\end{equation}

	For $ x = (x_{j})_{j=1}^{\infty} \in \calD^{\bbN} $ and $ k \in \bbN $, \eqref{eq:WordsInApproxCube} implies
	\begin{equation}\label{eq:Count-Gamma-k}
		\begin{aligned}
			\# \Gamma_{k}(x) & = \# \bigg\{ I \in \calL_{k}(\calD^{\bbN}) \colon \tau_{i}(I|_{\tk[i-1]}^{\tk[i]} ) = \tau_{i}(x|_{\tk[i-1]}^{\tk[i]}) \text{ for } 1 \leq i \leq s \bigg \} \\
			& =  \prod_{i=1}^{s}\prod_{j=\tk[i-1]+1}^{\tk[i]} \# \tau_{i}^{-1}(\tau_{i}(x_{j})).
		\end{aligned}
	\end{equation}
	Since $ \mu(I) = 1/(\#\calD)^{k}$ for $ I \in \calD^{k}$, \autoref{lem:MassAppCube} shows
	\begin{equation}\label{eq:mass-RmuQk}
		\begin{aligned}
			R\mu(Q_{k}(x)) & = \sum_{I \in \Gamma_{k}(x)} \mu(I) = \frac{ \#\Gamma_{k}(x)}{(\# \calD)^{k}}.
		\end{aligned}
	\end{equation}
	Combining \eqref{eq:def-Xj}, \eqref{eq:def-L-k}, \eqref{eq:Count-Gamma-k} and \eqref{eq:mass-RmuQk} gives
	\begin{equation}\label{eq:DensityRatio-RandVar}
		\Theta_{k}^{\varphi}(\mu, x) = \exp  \left( \sum_{j=1}^{k} X_{j}^{(k)} - L(k) \right) \mFor x \in \calD^{\bbN}.
	\end{equation}
	By \autoref{prop:HausEqParry} and the density theorem for the Hausdorff measures (see e.g.\ \cite[Theorem 6.2]{Mattila1995}),
	\begin{equation}
	\mu \left \{ x \colon \limsup_{k\to\infty} \Theta_{k}^{\varphi}(\mu, x) \approx 1  \right\} = 1
	\end{equation}
	Then by \eqref{eq:DensityRatio-RandVar}, there exists $ M > 0 $ such that
	\begin{equation}\label{eq:ProbOne-Bound}
		\bbP \left \{ \limsup_{k\to\infty} \Abs{\sum_{j=1}^{k} X_{j}^{(k)} - L(k) } < M \right \} = 1
	\end{equation}
	where $ \bbP $ denotes the underlying probability (that is, a probabilistic notation for $ \mu $).
	
	Suppose on the contrary that $ \dimH K < \dimB K $. It follows from \autoref{prop:DimCoin-Sier} that $ \# \tau_{i_{0}}^{-1}(\tau_{i_{0}}(x)) \neq \# \tau_{i_{0}}^{-1}(\tau_{i_{0}}(y))$ for some $ 1 \leq i_{0} \leq s $ and $ x, y \in\calD$. Let $ Y $ be the random variable with law $ \sum_{x\in\calD} \frac{1}{\#\calD} \delta_{\# \tau_{i_{0}}^{-1}(\tau_{i_{0}}(x))}$, then $ \Var(Y) > 0 $. By \eqref{eq:def-Xj}, $Y $ has the same law as $ X_{j}^{(k)}$ for $ \tk[i-1]+1 \leq j \leq \tk[i]$. Then by the independence of $ X_{j}^{(k)}$,
	\begin{equation*}
		V_{k}^{2} \geq \sum_{j=\tk[i-1]+1}^{\tk[i]} \Var(X_{j}^{(k)}) \gtrsim \Var(Y) (\theta_{i} - \theta_{i-1}) k \, \to \infty,
	\end{equation*}
	as $ k \to\infty$. By Lyapunov's Central Limit Theorem (see e.g.\ \cite[Theorem 7.1.2]{Chung2001}), the normalized random sum $ Y_{k} = (\sum_{j=1}^{k} X_{j}^{(k)} - E_{k})/ V_{k}$ converges to the standard normal random variable $ N(0,1) $ in distribution. Since the probability density function of $ N(0,1)$ is uniformly continuous and $ \lim_{k\to\infty} M/V_{k} = 0 $,
	\begin{equation}\label{eq:LimThm-App}
		\begin{aligned}
			\bbP \left\{ \Abs{\sum_{j=1}^{k} X_{j}^{(k)} - L(k) } < M \right\} = \bbP \left\{ \Abs{Y_{k} - \frac{L(k)-E_{k}}{V_{k}} } < \frac{M}{V_{k}} \right\} \to 0
		\end{aligned}
	\end{equation}
	as $ k \to \infty $. This implies
	\begin{equation*}
		\bbP \left \{ \limsup_{k\to\infty} \Abs{\sum_{j=1}^{k} X_{j}^{(k)} - L(k) } < M \right \} \leq \limsup_{k\to\infty} \bbP \left \{  \Abs{\sum_{j=1}^{k} X_{j}^{(k)} - L(k) } < M \right \} = 0,
	\end{equation*}
	which contradicts \eqref{eq:ProbOne-Bound}.
\end{proof}

\begin{remark}\label{rmk:Gen-LimThm}
	The proof of \autoref{thm:SierSponge} works for some other subshift $X$ satisfying weak specification if we can write $\log R\mu(Q_{k}(x))$ as a sequence of random variables such that some limit theorem can be applied to conclude an analog of \eqref{eq:LimThm-App}. Next we give an example to illustrate it. Let $m_{1}, m_{2}, m_{3}$ be integers such that $ m_{1} > m_{2} > m_{3} \geq 2,\, m_{1}, m_{2} \geq 3$. Let $ \calA, \theta_{i}, \pi_{i}, \tau_{i}, 1 \leq i \leq s = 3 $ be defined as in \autoref{subsec:subshifts}. Let
	\begin{equation*}
		\calD = \left \{ \pmat{0 \\ 0 \\ 0}, \pmat{0 \\ 1 \\ 0}, \pmat{1 \\ 1 \\ 0}, \pmat{0 \\ 2 \\ 1}, \pmat{1 \\ 2 \\ 1}, \pmat{2 \\ 2 \\ 1} \right \}.
	\end{equation*}
	Enumerate $\calD$ as $ \{ z_{1}, \ldots, z_{6}\}$ according to the above order.
	Let $ A = (A_{i,j}) $ be a $ 6 \times 6 $ matrix such that  $ A_{i,i} = 0 $ and $ A_{i,j} = 1$ if $ i \neq j$ for $ 1 \leq i, j \leq 6$. Consider the following subshift of finite type
	\begin{equation*}
		\Sigma_{A} = \left\{ (x_{k})_{k=1}^{\infty} \in \calD^{\bbN} \colon \forall\, k \in \bbN, \, \exists\, 1\leq i, j\leq 6, \, x_{k} = z_{i}, x_{k+1} = z_{j}, \, A_{i,j} = 1 \right\}.
	\end{equation*}
	Let $ K = R(\Sigma_{A})$.
	By \cite{Parry1964} (see also \cite{Walters1982}), the unique measure of maximal entropy $\mu$ on $ \Sigma_{A} $ is the Markov measure generated by the probability vector $ (1/6, \ldots, 1/6)$ and the random matrix $ A / 5 $. By computation, $ \tau_{2}\mu $ is the Markov measure on $ \tau_{2}(\Sigma_{A})$ generated by $ (1/6, 1/3, 1/2)$ and $ \begin{pmatrix}
		0 & 2/5 & 3/5 \\
		1/5& 1/5 & 3/5 \\
		1/5& 2/5 & 2/5
	\end{pmatrix} $. Since the computation shows that $ \tau_{2}\mu$ is not the measure of maximal entropy on $ \tau_{2}(\Sigma_{A})$, it follows from  \autoref{thm:Ext-KP-DimCoin}\ref{itm:ParryChain} that $ \dimH K < \dimB K $. By the definition of $ \calD $, a similar estimate as in the proof of \autoref{thm:SierSponge} gives
	\begin{equation*}
		R\mu(Q_{k}(x)) \approx \theta_{1} k + 3 (1-\theta_{2}) k + \sum_{j = \tk[1] + 1}^{\tk[2]} Y((\tau_{2}x)_{j})
	\end{equation*}
	where $ Y \colon \tau_{2}\calD \to \{1,2,3\} $ is a function defined as $ Y((0, 0)^{t}) = 1,  Y((1, 0)^{t}) = 2$ and $ Y((2, 1)^{t}) = 3 $. Since the law of $\tau_{2}x$ is the Markov measure $ \tau_{2}\mu $ if $ x $ is a random sequence with law $ \mu $, we can apply the central limit theorem for Markov chains (see e.g.\ \cite[Example 8.3.4]{Durrett2019}) and adapt the proof of \autoref{thm:SierSponge} to conclude that $ \haus{\varphi}{K} = 0 $ or $\infty$ for every gauge function $ \varphi $.
\end{remark}

\subsection{Infinite Hausdorff measure} In this subsection, we provide a sufficient condition (see \autoref{prop:CriterionInfHaus}) for \BMsponge[s] to have infinite Hausdorff measures at their respective Hausdorff dimensions. This is based on the method of Peres~\cite{Peres1994} and a relation (see \autoref{lem:Sum-delta-i}) satisfied by the measure of full dimension. In \cite[Section 5]{Peres1994}, Peres mentioned that \cite[Theorem 1 and Proposition 2]{Peres1994} extend to \BMsponge[s]. However, he did not give a detailed justification. In fact, the proof of \cite[Proposition 2]{Peres1994} does not work for \BMsponge[s] according to the examples in \autoref{subsec:examples}. For the extension of \cite[Theorem 1]{Peres1994}, it is not clear how to construct a measure with zero density like that in the proof of \cite[Theorem 1]{Peres1994} since the measure of full dimension becomes more complicated when $ s \geq 3 $ (see \autoref{prop:Sier-HausDim}).

Throughout this subsection we fix $ \emptyset \neq \calD \subset \calA $ and let $ K = R(\calD^{\bbN})$. Let $p = (p(x))_{x\in\calD} $ be the probability vector given in \eqref{eq:McMullen-vec}. Write $ \gamma = \dimH K $. For a finite set $ \calB $ and probability vectors $ q = (q(x))_{x \in \calB}$ and $ q' = (q'(x))_{x\in \calB}$, following \cite{Peres1994} we define
\begin{equation}\label{eq:Def-Delta}
	\Delta(q\| q') = \sum_{x \in \calB} (q(x)- q'(x)) \log q(x),
\end{equation}
where we require $ q'(x) = 0 $ if $ q(x) = 0 $. 

The following proposition is essentially contained in \cite{Peres1994}. For completeness, we include a proof of \autoref{prop:PeresInRd} in \autoref{sec:PeresRd}.

\begin{proposition}[{Peres~\cite{Peres1994}}]\label{prop:PeresInRd} If there exist a probability vector $ q $ on $\calD $ and $ c > 0 $ such that
	\begin{equation}\label{eq:Cond-Peres}
		\sum_{i=1}^{s} \Delta(\tau_{i}p\| \tau_{i} q) \int_{\theta_{i-1}k}^{\theta_{i}k} \frac{1}{\log t} \,d t \geq c \frac{k}{(\log k)^{2}} \quad\text{ for large } k \in \bbN,
	\end{equation}
	then $ \haus{\varphi}{K} = \infty $, where
	\begin{equation}\label{eq:peres-gauge}
		\varphi(r) = r^{\gamma} \exp\left(\wt{c} \frac{\abs{\log r}}{(\log \abs{\log r})^{2}} \right), \quad r > 0,
	\end{equation}
	for some $ \wt{c} > 0 $ depending on $ c $ explicitly. In particular, $ \haus{\gamma}{K} = \infty$.
\end{proposition}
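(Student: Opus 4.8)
The plan is to follow Peres's method~\cite{Peres1994}: build an explicit mass distribution on $K$ whose $\varphi$-density vanishes almost everywhere, then quote the lower bound in \autoref{lem:RogersTaylor}. It suffices to produce a Borel probability measure $\nu$ on $\calD^{\bbN}$ such that $\limsup_{k\to\infty} R\nu(Q_k(x))/\varphi(n_s^{-k}) = 0$ for $R\nu$-a.e.\ $x\in K$: then \autoref{lem:RogersTaylor}(i), applied with an arbitrary constant $C>0$, gives $\haus{\varphi}{K}\gtrsim R\nu(K)/C = 1/C$, hence $\haus{\varphi}{K}=\infty$, and likewise $\haus{\gamma}{K}=\infty$ since the same estimate runs with the plain gauge $r^{\gamma}$ (the case $\wt{c}=0$). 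The measure is taken to be an \emph{inhomogeneous Bernoulli measure}: let $p$ be the McMullen vector of \eqref{eq:McMullen-vec}, the marginal of the measure of full dimension $\eta_p$ (\autoref{prop:Sier-HausDim}); put $\varepsilon_j = 1/\log j$ for $j$ large and $\varepsilon_j=0$ for $j$ small, and set $\nu = \bigotimes_{j\ge 1} p_j$ with $p_j = (1-\varepsilon_j)p + \varepsilon_j q$. As $p$ is strictly positive on $\calD$, each $p_j$ is a genuine probability vector bounded below on $\calD$, so $\nu$ is well defined and $R\nu$ is a probability measure carried by $K$.

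The core is the drift estimate for $\log R\nu(Q_k(x))$ under a $\nu$-random $x=(x_j)_j$. Because $\nu$ is a product measure, \autoref{lem:MassAppCube} and \eqref{eq:WordsInApproxCube} factorise $R\nu(Q_k(x)) = \prod_{j=1}^{k}(\tau_{i(j)}p_j)(\tau_{i(j)}(x_j))$, where $i(j)$ is the unique index with $\tk[i(j)-1] < j \le \tk[i(j)]$; hence $\log R\nu(Q_k(x)) = \sum_{j=1}^{k}X_j$ is a sum of independent, uniformly bounded variables with $E[X_j] = -H(\tau_{i(j)}p_j)$. Combining the first-order expansion $H(\tau_i p_j) = H(\tau_i p) + \varepsilon_j\,\Delta(\tau_i p\|\tau_i q) + O(\varepsilon_j^{2})$ (the entropy derivative along the segment from $\tau_i p$ toward $\tau_i q$ is $\Delta(\tau_i p\|\tau_i q)$ of \eqref{eq:Def-Delta}), the identity $\sum_{i=1}^{s}(\theta_i-\theta_{i-1})H(\tau_i p) = \gamma\log n_s$ (from \autoref{thm:dimLY-ExistFullDim} applied to $\eta_p$, using $\dimH R\eta_p = \dimH K$ and $h(\tau_i\eta_p) = H(\tau_i p)$), and the comparison $\sum_{\tk[i-1]<j\le\tk[i]}\varepsilon_j = \int_{\theta_{i-1}k}^{\theta_i k}\frac{dt}{\log t} + O\!\big(\tfrac{1}{\log k}\big)$, one gets
\[
	E\big[\log R\nu(Q_k(x))\big] + k\gamma\log n_s \;=\; -\sum_{i=1}^{s}\Delta(\tau_i p\|\tau_i q)\int_{\theta_{i-1}k}^{\theta_i k}\frac{dt}{\log t} \;+\; (\text{error}) \;\le\; -\tfrac{c}{2}\,\frac{k}{(\log k)^{2}}
\]
for all large $k$, by hypothesis \eqref{eq:Cond-Peres}. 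Since the $X_j$ are bounded, Hoeffding's inequality bounds $\bbP\big(\log R\nu(Q_k(x)) + k\gamma\log n_s > -\tfrac{c}{4}\tfrac{k}{(\log k)^{2}}\big)$ by $\exp(-c'k/(\log k)^{4})$, which is summable; Borel--Cantelli then gives $\log R\nu(Q_k(x)) + k\gamma\log n_s \to -\infty$ for $\nu$-a.e.\ $x$, so $R\nu(Q_k(x)) \le n_s^{-k\gamma}e^{-\frac{c}{4}k/(\log k)^{2}}$ eventually, which is $o(\varphi(n_s^{-k}))$ once $\wt{c}$ in \eqref{eq:peres-gauge} is chosen small in terms of $c$. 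This is the vanishing density required above.

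I expect the main obstacle to be the order-$k/(\log k)^{2}$ bookkeeping inside the drift estimate rather than the probabilistic part. One must expand the projected entropies $H(\tau_i p_j)$ to the correct precision and match $\sum_j\varepsilon_j$ against $\int\frac{dt}{\log t}$ exactly, because in the borderline regime where $\sum_i(\theta_i-\theta_{i-1})\Delta(\tau_i p\|\tau_i q)=0$ the genuine drift already sits at the scale $k/(\log k)^{2}$ — the same scale as the second-order term $\tfrac12\sum_i\big(\sum_z\tfrac{((\tau_i q)_z-(\tau_i p)_z)^{2}}{(\tau_i p)_z}\big)\sum_{\tk[i-1]<j\le\tk[i]}\varepsilon_j^{2}$ and as the correction in \eqref{eq:peres-gauge}. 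Arranging these competing $k/(\log k)^{2}$ contributions to line up with the right sign — by extracting the exact $k/(\log k)^{2}$ coefficient of $\int_{\theta_{i-1}k}^{\theta_i k}\frac{dt}{\log t}$ and, if needed, refining the interpolation $p_j$ (equivalently the choice of $\varepsilon_j$) — is the delicate step, and is exactly Peres's carpet computation, which I would transcribe with the diagonal endomorphism of $\torus[d]$ replacing the planar self-affine map. The remaining pieces — the reduction through \autoref{lem:RogersTaylor}, the factorisation through \autoref{lem:MassAppCube}, and the concentration bound — are routine.
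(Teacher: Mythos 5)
Your proposal follows essentially the same route as the paper's proof: the same Peres-style inhomogeneous product measure interpolating from the full-dimension marginal $p$ toward $q$ at rate $\sim 1/\log j$, the same factorisation of $\log R\nu(Q_k(x))$ via \autoref{lem:MassAppCube}, the same first-order entropy expansion producing the drift $-\sum_i\Delta(\tau_i p\|\tau_i q)\int_{\theta_{i-1}k}^{\theta_i k}\frac{dt}{\log t}$, and the same conclusion through \autoref{lem:RogersTaylor}; the only difference is that you control fluctuations by Hoeffding plus Borel--Cantelli where the paper invokes the law of the iterated logarithm, and either suffices since both errors are $o(k/(\log k)^2)$. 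The second-order issue you flag is real but is settled exactly as you indicate: take $\varepsilon_j=\delta/\log j$ with $\delta$ small relative to $c$, so the $O(\delta^2\sum_j(\log j)^{-2})=O(\delta^2 k/(\log k)^2)$ term is dominated by the first-order drift $\delta c\,k/(\log k)^2$ — which is precisely the scaling the paper's construction uses.
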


In the remaining part of this subsection, we prove the following proposition which is a sufficient condition such that the Hausdorff measures of Bedford\nobreakdash-McMullen sponges are infinite at their Hausdorff dimensions. For  a probability vector $ q = (q(x))_{x\in\calB}$ on a finite set $\calB $, we say $q $ is \textit{uniform} if $q = (1/\#\calB, \ldots, 1/\#\calB)$.

\begin{proposition}\label{prop:CriterionInfHaus}
	If 
	\begin{equation}\label{eq:Cond-InfHaus}
		 1 \leq \# \{ 1 \leq i \leq s \colon \tau_{i} p \text{ is not uniform}\} \leq 2,
	\end{equation}
	then $ \haus{\dimH K}{K} = \infty $.
\end{proposition}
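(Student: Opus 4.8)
The plan is to invoke \autoref{prop:PeresInRd}: it suffices to produce a probability vector $q$ on $\calD$ and a constant $c>0$ satisfying \eqref{eq:Cond-Peres}, which then gives $\haus{\dimH K}{K}=\infty$. Set $g(\theta)=\theta(1-\log\theta)$ for $\theta\in(0,1]$ and $g(0)=0$. From the standard asymptotics of the logarithmic integral,
\[
	\int_{\theta_{i-1}k}^{\theta_{i}k}\frac{1}{\log t}\,dt=\frac{(\theta_{i}-\theta_{i-1})\,k}{\log k}+\frac{\bigl(g(\theta_{i})-g(\theta_{i-1})\bigr)k}{(\log k)^{2}}+O\!\left(\frac{k}{(\log k)^{3}}\right),\qquad k\to\infty,
\]
so the left-hand side of \eqref{eq:Cond-Peres} equals
\[
	\frac{k}{\log k}\sum_{i=1}^{s}(\theta_{i}-\theta_{i-1})\,\Delta(\tau_{i}p\|\tau_{i}q)+\frac{k}{(\log k)^{2}}\sum_{i=1}^{s}\bigl(g(\theta_{i})-g(\theta_{i-1})\bigr)\,\Delta(\tau_{i}p\|\tau_{i}q)+O\!\left(\frac{k}{(\log k)^{3}}\right).
\]
By \autoref{lem:Sum-delta-i} the first sum vanishes for \emph{every} probability vector $q$ on $\calD$ (equivalently, $x\mapsto\sum_{i=1}^{s}(\theta_{i}-\theta_{i-1})\log(\tau_{i}p)(\tau_{i}(x))$ is constant on $\calD$). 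Hence it remains to find $q$ with
\[
	A(q):=\sum_{i=1}^{s}\bigl(g(\theta_{i})-g(\theta_{i-1})\bigr)\,\Delta(\tau_{i}p\|\tau_{i}q)>0,
\]
and then \eqref{eq:Cond-Peres} holds for all large $k$ with $c=A(q)/2$.

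To find such a $q$, I would perturb $p$. Since $p(x)>0$ for every $x\in\calD$ by \eqref{eq:McMullen-vec}, for any $v=(v(x))_{x\in\calD}$ with $\sum_{x}v(x)=0$ the vector $q_{\varepsilon}:=p+\varepsilon v$ is a probability vector on $\calD$ once $\abs{\varepsilon}$ is small enough. Since $b\mapsto\Delta(a\|b)$ is affine, $\Delta(\tau_{i}p\|\tau_{i}q_{\varepsilon})=-\varepsilon\sum_{x\in\calD}v(x)\log(\tau_{i}p)(\tau_{i}(x))$, whence
\[
	A(q_{\varepsilon})=-\varepsilon\sum_{x\in\calD}v(x)\,G(x),\qquad G(x):=\sum_{i=1}^{s}\bigl(g(\theta_{i})-g(\theta_{i-1})\bigr)\log(\tau_{i}p)(\tau_{i}(x)).
\]
Thus it is enough to show that $G$ is \emph{not} constant on $\calD$: picking $x_{1},x_{2}\in\calD$ with $G(x_{1})\neq G(x_{2})$, taking $v(x_{1})=1$, $v(x_{2})=-1$, $v(x)=0$ otherwise, and choosing $\varepsilon\neq0$ small with the sign opposite to that of $G(x_{1})-G(x_{2})$ makes $A(q_{\varepsilon})>0$.

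To see that $G$ is non-constant, let $S=\{1\le i\le s:\tau_{i}p\text{ not uniform}\}$. For each $i$, since $\tau_{i}(\calD)=\calD_{i}$, the map $x\mapsto\log(\tau_{i}p)(\tau_{i}(x))$ is constant on $\calD$ exactly when $\tau_{i}p$ is uniform (then it equals $-\log\#\calD_{i}$) and non-constant otherwise. Because $\sum_{i}(\theta_{i}-\theta_{i-1})\log(\tau_{i}p)(\tau_{i}(\cdot))$ is constant on $\calD$ with all coefficients $\theta_{i}-\theta_{i-1}>0$, the case $\abs{S}=1$ is impossible, so by hypothesis $\abs{S}=2$, say $S=\{i_{1}<i_{2}\}$. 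Writing $u_{i}=\theta_{i}-\theta_{i-1}>0$ and $w_{i}=g(\theta_{i})-g(\theta_{i-1})>0$, this constant relation expresses $\log(\tau_{i_{2}}p)(\tau_{i_{2}}(\cdot))$ as an affine function of $\log(\tau_{i_{1}}p)(\tau_{i_{1}}(\cdot))$ with slope $-u_{i_{1}}/u_{i_{2}}$ on $\calD$, while the remaining summands of $G$ are constant on $\calD$; hence
\[
	G(x)=\Bigl(w_{i_{1}}-\tfrac{u_{i_{1}}}{u_{i_{2}}}\,w_{i_{2}}\Bigr)\log(\tau_{i_{1}}p)(\tau_{i_{1}}(x))+(\text{const})\qquad(x\in\calD).
\]
Since $i_{1}<i_{2}$, the intervals $[\theta_{i_{1}-1},\theta_{i_{1}}]$ and $[\theta_{i_{2}-1},\theta_{i_{2}}]$ have disjoint interiors, and $g$ is strictly concave on $(0,1]$ as $g''(\theta)=-1/\theta<0$; hence its secant slopes over these intervals are strictly decreasing, i.e.\ $w_{i_{1}}/u_{i_{1}}>w_{i_{2}}/u_{i_{2}}$, so the displayed coefficient is nonzero. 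As $i_{1}\in S$ makes $\log(\tau_{i_{1}}p)(\tau_{i_{1}}(\cdot))$ non-constant on $\calD$, $G$ is non-constant, which completes the argument.

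The \textbf{main obstacle} is that \autoref{lem:Sum-delta-i} annihilates the $k/\log k$ term of \eqref{eq:Cond-Peres}, forcing one to extract the sign of the genuinely second-order quantity $A(q)$; making $A(q)>0$ works here precisely because, when exactly two of the $\tau_{i}p$ are non-uniform, the single linear relation furnished by \autoref{lem:Sum-delta-i} together with the strict concavity of $g$ automatically yields a nonzero coefficient in the formula for $G$ — a mechanism that breaks down as soon as three or more of the $\tau_{i}p$ fail to be uniform, which is exactly why \eqref{eq:Cond-InfHaus} is imposed.
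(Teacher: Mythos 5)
Your proof is correct and follows essentially the same route as the paper: reduce to Peres's criterion (\autoref{prop:PeresInRd}), use \autoref{lem:Sum-delta-i} both to annihilate the leading $k/\log k$ term and to rule out the case of exactly one non-uniform projection, and then exploit that the second-order contribution of $\int \frac{dt}{\log t}$ strictly favours the earlier interval. The only differences are in execution: you pick $q$ by perturbing $p$ and argue via non-constancy of $G$ (where the paper instead takes $\tau_{i_1}q$ uniform and invokes \autoref{lem:Delta-properties}), and you replace the paper's integral estimate \eqref{eq:IntegralDiff} by an explicit expansion with the strictly concave function $g(\theta)=\theta(1-\log\theta)$ --- the same mechanism in different form.
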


\autoref{prop:CriterionInfHaus} extends some results in \cite{Peres1994} since for \BMcarpet[s], the upper bound in \eqref{eq:Cond-InfHaus} trivially hold when $ d = 2 $ and the lower bound in \eqref{eq:Cond-InfHaus} follows from $ \dimH K < \dimB K $ and \autoref{thm:Ext-KP-DimCoin}\ref{itm:ParryChain}. Next we give two lemmas for the proof of \autoref{prop:CriterionInfHaus}.

\begin{lemma}\label{lem:Delta-properties}
	If $ q $ is uniform, then $ \Delta(q \| q') = 0 $; if $ q' $ is uniform but $ q $ is not uniform, then $\Delta(q\| q') > 0 $.
\end{lemma}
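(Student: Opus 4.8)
The plan is to reduce both assertions to elementary convexity facts about $t\mapsto t\log t$, equivalently to the nonnegativity of relative entropy. Write $N=\#\calB$. For the first assertion I would simply note that if $q$ is uniform then $\log q(x)=-\log N$ does not depend on $x$, whence
\begin{equation*}
	\Delta(q\|q') = -\log N\sum_{x\in\calB}\bigl(q(x)-q'(x)\bigr) = -\log N\cdot(1-1) = 0,
\end{equation*}
because $q$ and $q'$ are probability vectors; here the support hypothesis in \eqref{eq:Def-Delta} is vacuous since $q(x)=1/N\neq 0$ for all $x$. This step is immediate.

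For the second assertion, assume $q'$ is uniform, so $q'(x)=1/N>0$ for every $x$; by the support convention in \eqref{eq:Def-Delta} the vector $q$ must then also have full support, so every logarithm below is finite. The key step is the observation that, $q'$ being uniform, $\sum_{x}(q(x)-q'(x))\log q'(x)=\log(1/N)\sum_x(q(x)-q'(x))=0$, so that
\begin{equation*}
	\Delta(q\|q') = \sum_{x\in\calB}\bigl(q(x)-q'(x)\bigr)\log\frac{q(x)}{q'(x)} = \sum_{x\in\calB} q(x)\log\frac{q(x)}{q'(x)} + \sum_{x\in\calB} q'(x)\log\frac{q'(x)}{q(x)}.
\end{equation*}
Each of the two sums on the right is a relative entropy, hence nonnegative by Jensen's inequality (Gibbs' inequality), and each vanishes exactly when $q=q'$. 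Since $q$ is assumed not uniform we have $q\neq q'$, so at least one summand is strictly positive and $\Delta(q\|q')>0$.

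If one prefers not to invoke relative entropy, the same conclusion follows by summing the tangent‑line inequality $g(1/N)\geq g(q(x))+g'(q(x))(1/N-q(x))$ for the convex function $g(t)=t\log t$ over $x\in\calB$ and rearranging, which (using $g'(t)=\log t+1$ and $\sum_x(q(x)-1/N)=0$) gives $\Delta(q\|q')\geq \log N-H(q)$ with $H(q)=-\sum_x q(x)\log q(x)$; strict concavity of the Shannon entropy then makes the right‑hand side strictly positive whenever $q$ is not uniform. I do not expect a genuine obstacle in this lemma; the only point requiring a little care is the bookkeeping around the support convention in \eqref{eq:Def-Delta}, which is settled by the remark that a uniform $q'$ forces $q$ to have full support whenever $\Delta(q\|q')$ is meaningful.
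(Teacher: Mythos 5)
Your proof is correct, and the second half follows a genuinely different route from the paper. For the first assertion you do exactly what the paper does (the direct computation using that $\log q(x)$ is constant). For positivity, the paper also starts from the observation that a uniform $q'$ contributes nothing, but then rewrites $\Delta(q\|q')$ as a constant multiple of a double sum $\sum_{x\in\calB}\sum_{y\in\calB}$ of terms of the form $(\text{difference of values})\cdot(\text{difference of logarithms})$, each of which is nonnegative solely because $\log$ is increasing, with at least one strictly positive term when $q$ is non-uniform; no convexity input is needed. You instead use $\sum_x (q(x)-q'(x))\log q'(x)=0$ to identify $\Delta(q\|q')$ with the symmetrized Kullback--Leibler divergence $D(q\|q')+D(q'\|q)$ and invoke Gibbs' inequality, with the tangent-line variant giving the extra quantitative bound $\Delta(q\|q')\geq \log \#\calB - H(q)$. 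Both arguments are valid and short; the paper's buys maximal elementarity (monotonicity of $\log$ only), while yours buys an exact identity with the Jeffreys divergence and an entropy-gap lower bound. In fact your intermediate expression $\sum_x (q(x)-q'(x))\log\bigl(q(x)/q'(x)\bigr)$ is already a sum of termwise nonnegative terms by monotonicity of $\log$, with a strictly positive term since $q\neq q'$, so you could shortcut Jensen entirely and land essentially on the paper's mechanism. Your bookkeeping of the support convention (uniform $q'$ forces $q$ to have full support for $\Delta$ to be defined) is correct and slightly more careful than the paper, which does not comment on it.
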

\begin{proof}
	The first statement is by a direct computation. The second statement follows from
	\begin{equation*}
		\Delta(q\| q') = \frac{1}{2\#\calB} \sum_{x\in\calB} \sum_{y\in\calB} (q(x)-q'(y))(\log q(x)- \log q'(y)) > 0
	\end{equation*}
	when $ q' $ is uniform.
\end{proof}

The following lemma is the key ingredient for the verification of \eqref{eq:Cond-Peres} when $ s \geq 3 $.

\begin{lemma}\label{lem:Sum-delta-i}
 For every probability vector $ q $ on $ \calD $,
	\begin{equation}\label{eq:sum-Delta-i}
		\sum_{i=1}^{s} (\theta_{i}- \theta_{i-1})\Delta( \tau_{i}p\| \tau_{i} q) = 0.
	\end{equation}
\end{lemma}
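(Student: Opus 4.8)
The plan is to use that, for fixed $p$, the map $q\mapsto\Delta(\tau_i p\|\tau_i q)$ is affine, and thereby reduce \eqref{eq:sum-Delta-i} to a single pointwise identity for $p$. Expanding \eqref{eq:Def-Delta} gives $\Delta(\tau_i p\|\tau_i q)=\sum_{y\in\calD_i}\tau_i p(y)\log\tau_i p(y)-\sum_{y\in\calD_i}\tau_i q(y)\log\tau_i p(y)$; the convention in \eqref{eq:Def-Delta} is vacuous here since $p>0$ on $\calD$ forces $\tau_i p>0$ on $\calD_i$. Substituting $\tau_i q(y)=\sum_{x\in\calD\colon\tau_i(x)=y}q(x)$ and interchanging sums, the left side of \eqref{eq:sum-Delta-i} becomes $\Psi(p)-\Psi(q)$, where $\Psi(r):=\sum_{x\in\calD}r(x)\,\Phi(x)$ for a probability vector $r$ on $\calD$ and $\Phi(x):=\sum_{i=1}^{s}(\theta_i-\theta_{i-1})\log\tau_i p(\tau_i(x))$. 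So it suffices to prove that $\Phi$ is constant on $\calD$; granting that, $\Psi(r)$ equals that same constant for every probability vector $r$, hence $\Psi(p)=\Psi(q)$ and \eqref{eq:sum-Delta-i} holds.

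Next I would obtain a closed form for $\tau_i p(\tau_i(x))$. Fix $x\in\calD$ and put $y=\tau_i(x)$. Whenever $\tau_i(x')=y$ one has $\tau_j(x')=\tau_j(x)$ for all $j\ge i$, because $\tau_j=\pi_j\circ\cdots\circ\pi_{i+1}\circ\tau_i$; so plugging \eqref{eq:McMullen-vec} into $\tau_i p(y)=\sum_{x'\in\calD\colon\tau_i(x')=y}p(x')$ and pulling the factors with $j\ge i$ out of the sum,
\[
	\tau_i p(y)=\frac1Z\Big(\prod_{j=i}^{s}Z^{(j)}(\tau_j(x))^{\alpha_j-1}\Big)\sum_{x'\in\calD\colon\,\tau_i(x')=y}\ \prod_{j=1}^{i-1}Z^{(j)}(\tau_j(x'))^{\alpha_j-1}.
\]
The inner sum is handled by the auxiliary identity $G_m=\big(Z^{(m)}\big)^{\alpha_m}$ on $\calD_m$, where $G_m(z):=\sum_{x'\in\calD\colon\tau_m(x')=z}\prod_{j=1}^{m}Z^{(j)}(\tau_j(x'))^{\alpha_j-1}$. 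I would prove this by induction on $m$: the case $m=1$ is $G_1\equiv1=(Z^{(1)})^{\alpha_1}$, and for the step one groups the $x'$ in $G_m(z)$ by $w=\tau_{m-1}(x')$ (which runs exactly over $\pi_m^{-1}(z)\cap\calD_{m-1}$), getting $G_m(z)=Z^{(m)}(z)^{\alpha_m-1}\sum_{w\in\pi_m^{-1}(z)}G_{m-1}(w)$, and then feeds in the inductive hypothesis together with the recursion \eqref{eq:def-recur-Zi}. Dividing $G_i(y)$ by $Z^{(i)}(y)^{\alpha_i-1}$ shows the inner sum above equals $Z^{(i)}(y)$, so that
\[
	\tau_i p(\tau_i(x))=\frac1Z\,Z^{(i)}(\tau_i(x))^{\alpha_i}\prod_{j=i+1}^{s}Z^{(j)}(\tau_j(x))^{\alpha_j-1}.
\]

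Finally I would substitute this into $\Phi(x)$, expand the logarithm, and collect, for each $m\in\{1,\dots,s\}$, the coefficient of $\log Z^{(m)}(\tau_m(x))$; using $\theta_0=0$ to add the two contributions (the $i=m$ piece $\alpha_i\log Z^{(i)}$ and the $i<m$ pieces $(\alpha_m-1)\log Z^{(m)}$) it equals $(\theta_m-\theta_{m-1})\alpha_m+\theta_{m-1}(\alpha_m-1)=\theta_m\alpha_m-\theta_{m-1}$. By \eqref{eq:def-alpha} and \eqref{eq:def-theta}, $\alpha_m=\log n_m/\log n_{m-1}=\theta_{m-1}/\theta_m$, so $\theta_m\alpha_m=\theta_{m-1}$ and the coefficient vanishes; meanwhile the $x$-independent contributions sum to $-(\theta_s-\theta_0)\log Z=-\log Z$. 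Hence $\Phi\equiv-\log Z$ on $\calD$, which is the required constancy, and \eqref{eq:sum-Delta-i} follows. The conceptual point is the affineness reduction; the only genuine computation is the inductive evaluation of $G_m$, and I expect that (rather than the subsequent bookkeeping with $\theta_{i-1}/\theta_i=\alpha_i$, $\theta_0=0$, $\theta_s=1$) to be the main thing to get right.
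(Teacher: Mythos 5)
Your proof is correct and follows essentially the same route as the paper: both reduce \eqref{eq:sum-Delta-i} to showing that $\sum_{i=1}^{s}(\theta_{i}-\theta_{i-1})\log \tau_{i}p(\tau_{i}(x)) = -\log Z$ for every $x \in \calD$, and then verify this constancy by substituting \eqref{eq:McMullen-vec} and collecting coefficients using $\theta_{i}\alpha_{i}=\theta_{i-1}$, $\theta_{0}=0$, $\theta_{s}=1$ and $Z^{(1)}\equiv 1$. The only difference is that you additionally prove the closed form for the marginal $\tau_{i}p$ via the induction $G_{m}=(Z^{(m)})^{\alpha_{m}}$, a step the paper reads off directly from \eqref{eq:McMullen-vec} together with the recursion \eqref{eq:def-recur-Zi}.
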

\begin{proof}
	For $ 1\leq i \leq s $, write $ p_{i} = \tau_{i}p $, $ q_{i} = \tau_{i} q $, and $\Delta_{i} = \Delta(p_{i}\| q_{i})$. By \eqref{eq:Def-Delta},
	\begin{equation*}
		\begin{aligned}
			\Delta_{i} & = \sum_{x \in \calD_{i}} (p_{i}(x)-q_{i}(x)) \log p_{i}(x) \\
			& = \sum_{x\in\calD_{i}} \log p_{i}(x) \sum_{y\in\tau_{i}^{-1}(x)} (p_{1}(y) - q_{1}(y)) & \text{by }p_{i}=\tau_{i}p_{1}, q_{i}=\tau_{i}q_{1}\\
			& = \sum_{x\in\calD_{i}}  \sum_{y\in\tau_{i}^{-1}(x)} (p_{1}(y) - q_{1}(y))\log p_{i}(\tau_{i}(y)) & \text{by } \tau_{i}(y) = x \text{ for } y \in \tau_{i}^{-1}(x)\\ 
			& = \sum_{y\in\calD_{1}} (p_{1}(y) - q_{1}(y))\log p_{i}(\tau_{i}(y)).
		\end{aligned}
	\end{equation*}
	Then
	\begin{equation*}
		\begin{aligned}
			\sum_{i=1}^{s} (\theta_{i}- \theta_{i-1})\Delta_{i} & = \sum_{x\in\calD_{1}} (p_{1}(x) - q_{1}(x)) \sum_{i=1}^{s} (\theta_{i}-\theta_{i-1}) \log p_{i}(\tau_{i}(x)).
		\end{aligned}
	\end{equation*}
	Since $ \sum_{x\in\calD_{1}}p_{1}(x) = \sum_{x\in\calD_{1}} q_{1}(x) =  1$, it suffices to show 
	\begin{equation}
		\sum_{i=1}^{s} (\theta_{i}-\theta_{i-1}) \log p_{i}(\tau_{i}(x)) = - \log Z \mFor x \in \calD_{1}.
	\end{equation}
	By \eqref{eq:McMullen-vec}, for $ x \in \calD_{1}$ and $ 2 \leq i \leq s$,
	\begin{equation*}
		p_{1}(x) = p_{i}(\tau_{i}(x)) \frac{1}{Z^{(i)}(\tau_{i}(x))} \prod_{j=1}^{i-1} Z^{(i)}(\tau_{j}(x))^{\alpha_{j}-1}.
	\end{equation*}
	Thus
	\begin{equation*}
		\begin{aligned}
			\sum_{i=1}^{s} &  (\theta_{i}-\theta_{i-1}) \log p_{i}(\tau_{i}(x)) \\
			& = \log p_{1}(x) - \sum_{i=2}^{s} (\theta_{i-1}-\theta_{i}) Z^{(i)}(\tau_{i}(x)) - \sum_{i=2}^{s}\sum_{j=1}^{i-1} (\theta_{i}-\theta_{i-1}) (\alpha_{j}-1)\log Z^{(j)}(\tau_{j}(x)) \\
			& = \log p_{1}(x) - \sum_{i=2}^{s} (\theta_{i-1}-\theta_{i}) Z^{(i)}(\tau_{i}(x)) - \sum_{j=1}^{s-1}\sum_{i=j+1}^{s} (\theta_{i}-\theta_{i-1}) (\alpha_{j}-1)\log Z^{(j)}(\tau_{j}(x)) \\
			& = \log p_{1}(x) - \sum_{i=2}^{s} (\theta_{i-1}-\theta_{i}) Z^{(i)}(\tau_{i}(x)) - \sum_{j=1}^{s-1}(1-\theta_{j}) (\alpha_{j}-1)\log Z^{(j)}(\tau_{j}(x)) \\
			& = \log p_{1}(x) - \sum_{i=1}^{s} (\alpha_{i}-1) Z^{(i)}(\tau_{i}(x)) \\
			& = - \log Z,
		\end{aligned} 
	\end{equation*}
	where the second equality is by changing the index of $i, j$; the second last equality is by $ Z^{(1)} \equiv 1 $ and $ \theta_{i}\alpha_{i} = \theta_{i-1}$ for $ 1 \leq i \leq s$; the last equality is by \eqref{eq:McMullen-vec}.
\end{proof}

Now we are ready to prove \autoref{prop:CriterionInfHaus}.

\begin{proof}[Proof of \autoref{prop:CriterionInfHaus}]
	For a probability vector $ q $ on $ \calD $ and $ 1 \leq i \leq s $, write $ p_{i} = \tau_{i}p $, $ q_{i} = \tau_{i} q $, and $\Delta_{i} = \Delta(p_{i}\| q_{i})$. Define $ I = \{ 1 \leq i \leq s \colon p_{i} \text{ is not uniform} \}$. It follows from \autoref{lem:Delta-properties} that $ \Delta_{i} = 0 $ for $ i \notin I $. By \autoref{lem:Sum-delta-i},
	\begin{equation}\label{eq:Reduce-SumDelta-i}
		\sum_{i \in I} (\theta_{i}-\theta_{i-1}) \Delta_{i} = \sum_{i=1}^{s} (\theta_{i}-\theta_{i-1}) \Delta_{i} = 0.
	\end{equation}
	
	We first show $ \# I \neq 1 $. Suppose otherwise that $ I = \{i_{1}\}$. Take a probability vector $ q $ on $\calD$ such that $ \tau_{i_{1}}q $ is uniform. Then $ \Delta_{i_{1}} > 0 $ by \autoref{lem:Delta-properties}, which contradicts \eqref{eq:Reduce-SumDelta-i}. Then $ \# I = 2 $ by \eqref{eq:Cond-InfHaus}.
	
	Write $ I = \{ i_{1}, i_{2}\}$ for some $ 1 \leq i_{1} < i_{2} \leq s $. Take a probability vector $ q $ on $\calD$ such that $ \tau_{i_{1}}q $ is uniform. Then $ \Delta_{i_{1}} > 0 $ by \autoref{lem:Delta-properties}. By \eqref{eq:Reduce-SumDelta-i}
	\begin{equation*}
	 \Delta_{i_{2}} = - \frac{\theta_{i_{1}}-\theta_{i_{1}-1}}{\theta_{i_{2}}-\theta_{i_{2}-1}}\Delta_{i_{1}} < 0.
	\end{equation*}
	Since $ \Delta_{i} = 0 $ for $i \neq I$,
	\begin{equation}\label{eq:Reduce-Cond-Peres}
		\begin{aligned}
			\sum_{i=1}^{s} & \Delta_{i} \int_{\theta_{i-1}k}^{\theta_{i}k} \frac{1}{\log t} \,d t \\
			& = \Delta_{i_{1}}(\theta_{i_{1}}-\theta_{i_{1}-1})\left( \frac{1}{\theta_{i_{1}}-\theta_{i_{1}-1}} \int_{\theta_{i_{1}-1}k}^{\theta_{i_{1}}k}\frac{1}{\log t} \, dt  -  \frac{1}{\theta_{i_{2}}-\theta_{i_{2}-1}} \int_{\theta_{i_{2}-1}k}^{\theta_{i_{2}}k}\frac{1}{\log t} \, dt \right).
		\end{aligned}
	\end{equation}
	By a change of variable, for $ 0 < a < b \leq c < d$ there exists $ \delta > 0 $ such that
	\begin{equation}\label{eq:IntegralDiff}
		\frac{1}{b-a} \int_{ak}^{bk} \frac{1}{\log t} \, dt - \frac{1}{d-c} \int_{ck}^{dk} \frac{1}{\log t} \, dt \geq \delta \frac{k}{(\log k)^{2}} \mFor k \in \bbN \intxn [2/a,\infty).
	\end{equation}
	Combining \eqref{eq:Reduce-Cond-Peres} and \eqref{eq:IntegralDiff} verifies \eqref{eq:Cond-Peres}. Finally, the proof is completed by \autoref{prop:PeresInRd}.
\end{proof}

\begin{remark}
	From the proof of \autoref{prop:CriterionInfHaus} we see that
	\begin{equation}\label{eq:nonuniform!=1}
		\# \{ 1 \leq i \leq s \colon \tau_{i}p \text{ is not uniform}\} \neq 1.
	\end{equation}
	Based on \eqref{eq:nonuniform!=1} there is an alternative way to prove that for a Bedford-McMullen carpet $K$, $ 0 < \haus{\varphi}{K} < \infty $ implies $ \dimH K = \dimB K $. Suppose otherwise $ \dimH K < \dimB K $. Combining \autoref{prop:HausEqParry} and \autoref{thm:Ext-KP-DimCoin}\ref{itm:ParryChain} gives $ \# \{ 1 \leq i \leq s \colon \tau_{i}p \text{ is not uniform}\} = 1 $ which contradicts \eqref{eq:nonuniform!=1}.
\end{remark}

\section{Examples}\label{subsec:examples}

In this section, we present several \BMsponge[s]\ $K=R(\calD^{\bbN})$, $ \emptyset \neq \calD \subset \calA $ such that $ \dimH R\mu = \dimH K < \dimB K $, where $ \mu $ is the measure of maximal entropy on $\calD^{\bbN}$. For a finite set $ \calB$ and a probability vector $ p = (p(x))_{x\in\calB} $, let $ \eta_{p} $ denote the Bernoulli measure on $ \calB^{\bbN}$ with marginal $p$.
 
\begin{example}\label{ex:d=3}
	Let
	\begin{equation*}
		\Lambda  = \diag(64, 16, 8)
	\end{equation*}
	and
	\begin{equation*}
		\calD = \left \{ \pmat{0 \\ 0 \\ 0}, \pmat{0 \\ 1 \\ 0}, \pmat{0 \\ 2 \\ 0}, \pmat{0 \\ 3 \\ 0}, \pmat{0 \\ 0 \\ 1}, \pmat{1 \\ 0 \\ 1} \right \}.
	\end{equation*}
	Define $ K = R( \calD^{\bbN} ) $. By \autoref{prop:BoxDim} and \autoref{prop:Sier-HausDim},
	\begin{equation*}
		\dimB K  = \frac{\log 360}{12 \log 2} \mAnd \dimH K  = \frac{\log 18}{6 \log 2}.
	\end{equation*}
	The measure of maximal entropy is $ \eta_{p} $ with $ p = p_{1} = (1/6, 1/6, 1/6, 1/6, 1/6, 1/6 ) $. 
	Note that $ \tau_{2}\eta_{p_{1}} = \eta_{p_{2}} $ and $ \tau_{3}\eta_{p_{1}} = \eta_{p_{3}}$, where $ p_{2} = (1/6, 1/6, 1/6, 1/6, 1/3)$ and $ p_{3} = (2/3, 1/3) $. By \autoref{thm:dimLY-ExistFullDim},
	\begin{equation*}
		\begin{aligned}
			\dimH R \eta_{p} = \dLY{\eta_{p}} = \frac{\log 18}{6 \log 2}.
		\end{aligned} 
 	\end{equation*}
	Hence
	\begin{equation*}
		\dimH R\eta_{p} = \dimH K < \dimB K.
	\end{equation*}
	Since $\#\{ 1 \leq i \leq 3 \colon p_{i} \text{ is not uniform}\} = 2$, it follows from \autoref{prop:CriterionInfHaus} that
	\begin{equation*}
		 \haus{\dimH K}{K} = \infty.
	\end{equation*}
\end{example}

Next we give similar examples when $ d \geq 4 $.

\begin{example}\label{ex:d>=4}
	For $ d \geq 4 $, let
	\begin{equation*}
		\Lambda = \diag ( 2^{2^{d-1}}, 2^{2^{d-2}}, \ldots, 4, 2)
	\end{equation*}
	and
	\begin{equation*}
		\calD = \left \{ \pmat{\{0, 1, 2, 3\}\\ \{0\} \\ \{0,1\} \\ \{0\} \\ \{ 0, 1\} \\ \vdots \\ \{0, 1\}}, \pmat{\{0\} \\ \{0, 1, 2, 3, 4, 5, 6, 7\} \\ \{0\} \\ \{1\} \\ \{ 0, 1\} \\ \vdots \\ \{ 0, 1\} } \right \}.
	\end{equation*}
	Write the  left and right collections of digits above respectively as $ \calD_{1} $ and $\calD_{2}$. Then $ \calD = \calD_{1} \sqcup \calD_{2} $. Since for $ x \in \calD_{1}$ and $y \in \calD_{2}$,
	\begin{equation*}
		\# \pi_{2}^{-1}(\tau_{2}(x)) = 4 \neq 1 = \# \pi_{2}^{-1}(\tau_{2}(y)),
	\end{equation*}
	\autoref{prop:DimCoin-Sier} implies $ \dimH K < \dimB K $. Note that $ \alpha_{i} = 1/2 $ for $ 2 \leq i \leq d $. Then for $ x \in \calD_{1} $ and $ y \in \calD_{2} $, by \eqref{eq:def-recur-Zi},
	\begin{equation}\label{eq:Ex2-i<=4}
		\begin{aligned}
		Z^{(2)}(\tau_{2}(x)) = 4 & \mAnd Z^{(2)}(\tau_{2}(y)) = 1 \\
		Z^{(3)}(\tau_{3}(x)) = 2 & \mAnd Z^{(3)}(\tau_{3}(y)) = 8 \\
		Z^{(4)}(\tau_{4}(x)) = 2\sqrt{2} & \mAnd Z^{(4)}(\tau_{4}(y)) = 2\sqrt{2}.
	\end{aligned}
	\end{equation}
	If $ i > 4 $ and $ x , y \in \calD $, since $ \# \pi_{i}^{-1}(\tau_{i}(x)) = \# \pi_{i}^{-1}(\tau_{i}(y)) $,
	\begin{equation}\label{eq:Ex2-i>4}
		Z^{(i)}(\tau_{i}(x)) = Z^{(i)}(\tau_{i}(y)).
	\end{equation}
	By computation,
	\begin{equation*}
		\prod_{i=1}^{3} Z^{(i)}(\tau_{i}(x))^{\alpha_{i}-1} = 2^{-3/2} \mFor x \in \calD.
	\end{equation*}
	Then \autoref{prop:FullShift-MaxFull} shows that $ \dimH R\eta_{p} = \dimH K $, where $ p = (1/\#\calD, \ldots, 1/\#\calD)$. Hence $\dimH R\eta_{p} = \dimH K < \dimB K$.  Since \eqref{eq:McMullen-vec} implies that $ p_{1} $ and $ \tau_{i}p, i \geq 4 $ are uniform while $ \tau_{2}p $ and $ \tau_{3} p$ are not uniform, it follows from \autoref{prop:CriterionInfHaus} that $ \haus{\dimH K}{K} = \infty $.
\end{example}

\appendix

\section{Proof of \autoref{prop:PeresInRd}}\label{sec:PeresRd}
\begin{proof}[Proof of \autoref{prop:PeresInRd}] The proof is essentially contained in \cite{Peres1994}. For completeness we give a sketch of the proof. The strategy is to construct a measure $ \nu $ on $ \calD^{\bbN} $ such that $ \limsup_{k\to\infty} R\nu(Q_{k}(x))/\varphi(n_{s}^{-k}) = 0 $ for $\nu$-a.e.\ $ x $.
	
	For $ 1\leq i \leq s $, write $ p_{i} = \tau_{i}p $, $ q_{i} = \tau_{i} q $, and $\Delta_{i} = \Delta(p_{i}\| q_{i})$.
	For $ \delta > 0 $ and $ j \geq 2 $, following \cite{Peres1994} we define
	\begin{equation*}
		\xi^{(j)} = \left (1-\frac{\delta}{\log j}\right) p + \frac{\delta}{\log j} q.
	\end{equation*}
	and $ \xi^{(1)} = p $. Define the product measure on $\calD^{\bbN}$ by
	\begin{equation*}
		\nu = \prod_{j = 1}^{\infty} \xi^{(j)}.
	\end{equation*}
	For $ 1 \leq i \leq s $ and $ 1 \leq j \leq k$, $k\in \bbN$, let $ X_{i,j}^{(k)} $ be the random variable on $ \calD_{j} $ with law $ \sum_{y \in \calD_{i}} \tau_{i}\xi^{(j)}(y) \, \delta_{\log \tau_{i}\xi^{(j)}(y)}$. Then $ X_{i,j}^{(k)}$ is well defined since for $ y \in \calD_{i}$ there is $ x \in \calD$ with $ \tau_{i}(x) = y $ such that $ \tau_{i}\xi^{(j)}(y) \geq (1-\frac{\delta}{\log j}) p(x) > 0 $ by \eqref{eq:McMullen-vec}. By \eqref{eq:WordsInApproxCube} and \autoref{lem:MassAppCube}, for $ k \in \bbN $ and  $ x = (x_{j}) \in \calD^{\bbN}$,
	\begin{equation}\label{eq:RandSumLogMeas}
		\log R\nu(Q_{k}(x)) = \sum_{i=1}^{s} \sum_{j = \tk[i-1] + 1}^{\tk[i]} \log \tau_{i}\xi^{(j)}(\tau_{i}(x_{j})) = \sum_{i=1}^{s} \sum_{j = \tk[i-1] + 1}^{\tk[i]} X_{i,j}^{(k)}.
	\end{equation}
	Then $ \log \nu(Q_{k}(x))$ is a sum of independent variables. By \eqref{eq:Cond-Peres} and \autoref{lem:Delta-properties}, there is $ 1 \leq i_{0} \leq s $ such that $ \tau_{i_{0}}p = p_{i_{0}} $ is not uniform. Let $Y_{i_{0}}$ be the random variable on $ \calD_{i_{0}}$ with the law $\sum_{y \in \calD_{i_{0}}} p_{i_{0}}(y) \, \delta_{\log p_{i_{0}}(y)}$. Then $ \Var(X_{i_{0},j}^{(k)}) \geq \Var(Y_{i_{0}}) / 2 > 0 $ for $ j $ large enough since $ \tau_{i_{0}}\xi^{(j)} = \left (1-\frac{\delta}{\log j}\right) p_{i_{0}} + \frac{\delta}{\log j} q_{i_{0}} $. Then for $k $ large,
	\begin{equation*}
		\Var(\log R\nu(Q_{k}(x))) \geq \sum_{j = (\tk[i_{0}] + \tk[i_{0}-1]) / 2}^{\tk[i_{0}]} \Var(X_{i_{0},j}^{k}) \geq \frac{\Var(Y_{i_{0}})(\theta_{i_{0}} - \theta_{i_{0}-1})}{4} k \gtrsim k.
	\end{equation*}
	Thus the law of iterated logarithm (see, e.g.\ \cite[Theorem 7.5.1]{Chung2001}) implies that
	\begin{equation}\label{eq:App-LLT}
		\log R\nu(Q_{k}(x)) \leq E_{k}(\nu) + O\left(\sqrt{k\log\log k}\right) \mFor \text{$\nu$-a.e.\ } x.
	\end{equation}
	where $E_{k}(\nu) :=  \int \log R\nu(Q_{k}(x)) \, d\nu(x)$. By Taylor's theorem, for probability vectors $ q, q'$ and $\varepsilon > 0 $,
	\begin{equation}\label{eq:Taylor2Shannon}
		H((1-\varepsilon)q + \varepsilon q') = H(q) + \varepsilon \Delta(q\|q') + O(\varepsilon^{2}),
	\end{equation}
	where $H(\cdot)$ is the \textit{Shannon entropy}, that is, for a probability vector $ q = (q(x))_{x\in\calB} $,
	\begin{equation*}
		H(q) = \sum_{x\in \calB} - q(x) \log q(x).
	\end{equation*}
	Then
	\begin{equation*}
		\begin{aligned}
			E_{k}(\nu) & = \sum_{i=1}^{s} \sum_{j = \tk[i-1] + 1}^{\tk[i]} \sum_{x\in \calD_{i}} \tau_{i}\xi^{(j)}(x) \log \tau_{i}\xi^{(j)}(x) \\
			& = \sum_{i=1}^{s} \sum_{j = \tk[i-1] + 1}^{\tk[i]} - H(\tau_{i}\xi^{(j)}) \\
			& = \sum_{i=1}^{s} \sum_{j = \tk[i-1] + 1}^{\tk[i]} - H\left( \left (1-\frac{\delta}{\log j}\right) p_{i} + \frac{\delta}{\log j} q_{i} \right  ) \\
			& = \sum_{i=1}^{s} \sum_{j = \tk[i-1] + 1}^{\tk[i]} - H\left( p_{i} \right  ) - \delta \Delta_{i} \frac{1}{\log j} + O\left( \frac{\delta^{2}}{(\log j)^{2}}\right) \\
			& = - k \log Z  - \delta \sum_{i=1}^{s} \Delta_{i} \sum_{j=\tk[i]+1}^{\tk[i]} \frac{1}{\log j} + O(1) + \delta^{2} O(1) ,  \\
		\end{aligned}
	\end{equation*}
	where the last equality is by \autoref{prop:Sier-HausDim} and $ \sum_{j\geq 2} 1/(\log j)^{2} < \infty $. Since
	\begin{equation*}
		\sum_{j = a}^{b} \frac{1}{\log j}= \int_{a}^{b} \frac{1}{\log t}\, d t + O(1) \mFor 2 \leq a < b,
	\end{equation*}
	we have
	\begin{equation}\label{eq:Ek-Expect}
		E_{k}(\nu) = - k \log Z  - \delta \sum_{i=1}^{s} \Delta_{i} \int_{\theta_{i-1}k}^{\theta_{i}k} \frac{1}{\log t} \,d t + O(1) .
	\end{equation}
	
	Let $ \wt{c} > 0 $ be small such that $ \wt{c} M < c\delta / 2 $, where $ M > 0 $ is large such that for $ k \in \bbN \intxn [2, \infty) $, $
	\abs{\log n_{s}^{-k}} / (\log \abs{\log n_{s}^{-k}})^{2} \leq M k/(\log k)^{2}$. Take $ \varphi $ as in \eqref{eq:peres-gauge}. Combining \eqref{eq:Ek-Expect}, \eqref{eq:Cond-Peres}, \eqref{eq:peres-gauge} and \eqref{eq:App-LLT} gives
	\begin{equation*}
		\limsup_{k\to\infty} \log R\nu(Q_{k}(x)) - \log \varphi(n_{s}^{-k}) = - \infty \mFor \text{$\nu$-a.e.,x}.
	\end{equation*}
	By \autoref{lem:RogersTaylor}, this shows $\haus{\varphi}{K} = \infty $.
\end{proof}

%


%



\begin{thebibliography}{10}
	
	\bibitem{Baranski2007}
	Krzysztof Bara\'{n}ski.
	\newblock Hausdorff dimension of the limit sets of some planar geometric
	constructions.
	\newblock {\em Adv. Math.}, 210(1):215--245, 2007.
	
	\bibitem{BarralFeng2011}
	Julien Barral and De-Jun Feng.
	\newblock Non-uniqueness of ergodic measures with full {H}ausdorff dimensions
	on a {G}atzouras-{L}alley carpet.
	\newblock {\em Nonlinearity}, 24(9):2563--2567, 2011.
	
	\bibitem{Barreira1996}
	Luis~M. Barreira.
	\newblock A non-additive thermodynamic formalism and applications to dimension
	theory of hyperbolic dynamical systems.
	\newblock {\em Ergodic Theory Dynam. Systems}, 16(5):871--927, 1996.
	
	\bibitem{Bedford1984}
	Tim Bedford.
	\newblock Crinkly curves, markov partitions and dimension.
	\newblock {\em University of Warwick}, 1984.
	
	\bibitem{Chung2001}
	Kai~Lai Chung.
	\newblock {\em A course in probability theory}.
	\newblock Academic Press, Inc., San Diego, CA, third edition, 2001.
	
	\bibitem{DasSimmons2017}
	Tushar Das and David Simmons.
	\newblock The {H}ausdorff and dynamical dimensions of self-affine sponges: a
	dimension gap result.
	\newblock {\em Invent. Math.}, 210(1):85--134, 2017.
	
	\bibitem{Durrett2019}
	Rick Durrett.
	\newblock {\em Probability---theory and examples}, volume~49 of {\em Cambridge
		Series in Statistical and Probabilistic Mathematics}.
	\newblock Cambridge University Press, Cambridge, fifth edition, 2019.
	
	\bibitem{Falconer1988}
	Kenneth~J. Falconer.
	\newblock The {H}ausdorff dimension of self-affine fractals.
	\newblock {\em Math. Proc. Cambridge Philos. Soc.}, 103(2):339--350, 1988.
	
	\bibitem{Falconer1989}
	Kenneth~J. Falconer.
	\newblock Dimensions and measures of quasi self-similar sets.
	\newblock {\em Proc. Amer. Math. Soc.}, 106(2):543--554, 1989.
	
	\bibitem{Falconer1995}
	Kenneth~J. Falconer.
	\newblock Sub-self-similar sets.
	\newblock {\em Trans. Amer. Math. Soc.}, 347(8):3121--3129, 1995.
	
	\bibitem{Falconer2003}
	Kenneth~J. Falconer.
	\newblock {\em Fractal geometry: Mathematical foundations and applications}.
	\newblock John Wiley \& Sons, Inc., Hoboken, NJ, second edition, 2003.
	
	\bibitem{Feng2011}
	De-Jun Feng.
	\newblock Equilibrium states for factor maps between subshifts.
	\newblock {\em Adv. Math.}, 226(3):2470--2502, 2011.
	
	\bibitem{FengEtAl2020}
	De-Jun Feng, Chiu-Hong Lo, and Shuang Shen.
	\newblock Uniformity of {L}yapunov exponents for non-invertible matrices.
	\newblock {\em Ergodic Theory Dynam. Systems}, 40(9):2399--2433, 2020.
	
	\bibitem{FengWang2005}
	De-Jun Feng and Yang Wang.
	\newblock A class of self-affine sets and self-affine measures.
	\newblock {\em J. Fourier Anal. Appl.}, 11(1):107--124, 2005.
	
	\bibitem{Fraser2012a}
	Jonathan~M. Fraser.
	\newblock On the packing dimension of box-like self-affine sets in the plane.
	\newblock {\em Nonlinearity}, 25(7):2075--2092, 2012.
	
	\bibitem{Furstenberg1967}
	Harry Furstenberg.
	\newblock Disjointness in ergodic theory, minimal sets, and a problem in
	{D}iophantine approximation.
	\newblock {\em Math. Systems Theory}, 1:1--49, 1967.
	
	\bibitem{GatzourasPeres1996}
	Dimitrios Gatzouras and Yuval Peres.
	\newblock The variational principle for {H}ausdorff dimension: a survey.
	\newblock In {\em Ergodic theory of {${\bf Z}^d$} actions ({W}arwick,
		1993--1994)}, volume 228 of {\em London Math. Soc. Lecture Note Ser.}, pages
	113--125. Cambridge Univ. Press, Cambridge, 1996.
	
	\bibitem{GatzourasPeres1997}
	Dimitrios Gatzouras and Yuval Peres.
	\newblock Invariant measures of full dimension for some expanding maps.
	\newblock {\em Ergodic Theory Dynam. Systems}, 17(1):147--167, 1997.
	
	\bibitem{Jurga2023}
	Natalia Jurga.
	\newblock Nonexistence of the box dimension for dynamically invariant sets.
	\newblock {\em Anal. PDE}, 16(10):2385--2399, 2023.
	
	\bibitem{KaeenmaekiVilppolainen2010}
	Antti K\"{a}enm\"{a}ki and Markku Vilppolainen.
	\newblock Dimension and measures on sub-self-affine sets.
	\newblock {\em Monatsh. Math.}, 161(3):271--293, 2010.
	
	\bibitem{KenyonPeres1996}
	Richard Kenyon and Yuval Peres.
	\newblock Hausdorff dimensions of sofic affine-invariant sets.
	\newblock {\em Israel J. Math.}, 94:157--178, 1996.
	
	\bibitem{KenyonPeres1996a}
	Richard Kenyon and Yuval Peres.
	\newblock Measures of full dimension on affine-invariant sets.
	\newblock {\em Ergodic Theory Dynam. Systems}, 16(2):307--323, 1996.
	
	\bibitem{King1995}
	James~F. King.
	\newblock The singularity spectrum for general {S}ierpi\'{n}ski carpets.
	\newblock {\em Adv. Math.}, 116(1):1--11, 1995.
	
	\bibitem{Kolossvary2023}
	Istv\'{a}n Kolossv\'{a}ry.
	\newblock The {$L^q$} spectrum of self-affine measures on sponges.
	\newblock {\em J. Lond. Math. Soc. (2)}, 108(2):666--701, 2023.
	
	\bibitem{LalleyGatzouras1992}
	Steven~P. Lalley and Dimitrios Gatzouras.
	\newblock Hausdorff and box dimensions of certain self-affine fractals.
	\newblock {\em Indiana Univ. Math. J.}, 41(2):533--568, 1992.
	
	\bibitem{Luzia2010}
	Nuno Luzia.
	\newblock Measure of full dimension for some nonconformal repellers.
	\newblock {\em Discrete Contin. Dyn. Syst.}, 26(1):291--302, 2010.
	
	\bibitem{Mattila1995}
	Pertti Mattila.
	\newblock {\em Geometry of sets and measures in {E}uclidean spaces: Fractals
		and rectifiability}, volume~44 of {\em Cambridge Studies in Advanced
		Mathematics}.
	\newblock Cambridge University Press, Cambridge, 1995.
	
	\bibitem{McMullen1984}
	Curt McMullen.
	\newblock The {H}ausdorff dimension of general {S}ierpi\'{n}ski carpets.
	\newblock {\em Nagoya Math. J.}, 96:1--9, 1984.
	
	\bibitem{Olivier2010}
	Eric Olivier.
	\newblock Uniqueness of the measure with full dimension on sofic
	affine-invariant subsets of the 2-torus.
	\newblock {\em Ergodic Theory Dynam. Systems}, 30(5):1503--1528, 2010.
	
	\bibitem{Olsen1998}
	Lars Olsen.
	\newblock Self-affine multifractal {S}ierpinski sponges in {$\mathbf{R}^d$}.
	\newblock {\em Pacific J. Math.}, 183(1):143--199, 1998.
	
	\bibitem{Parry1964}
	William Parry.
	\newblock Intrinsic {M}arkov chains.
	\newblock {\em Trans. Amer. Math. Soc.}, 112:55--66, 1964.
	
	\bibitem{Peres1994a}
	Yuval Peres.
	\newblock The packing measure of self-affine carpets.
	\newblock {\em Math. Proc. Cambridge Philos. Soc.}, 115(3):437--450, 1994.
	
	\bibitem{Peres1994}
	Yuval Peres.
	\newblock The self-affine carpets of {M}c{M}ullen and {B}edford have infinite
	{H}ausdorff measure.
	\newblock {\em Math. Proc. Cambridge Philos. Soc.}, 116(3):513--526, 1994.
	
	\bibitem{Rams2005/06}
	Micha\l{} Rams.
	\newblock Measures of maximal dimension for linear horseshoes.
	\newblock {\em Real Anal. Exchange}, 31(1):55--62, 2005/06.
	
	\bibitem{RogersTaylor1961}
	C.~A. Rogers and S.~J. Taylor.
	\newblock Functions continuous and singular with respect to a {H}ausdorff
	measure.
	\newblock {\em Mathematika}, 8:1--31, 1961.
	
	\bibitem{Walters1982}
	Peter Walters.
	\newblock {\em An introduction to ergodic theory}, volume~79 of {\em Graduate
		Texts in Mathematics}.
	\newblock Springer-Verlag, New York-Berlin, 1982.
	
\end{thebibliography}

%
%

\end{document}